\documentclass{article}
%
\usepackage{amsmath}
\usepackage{latexsym}
\usepackage{amssymb}
\usepackage{colonequals}
\newtheorem{thm}{Theorem}[section]
\newtheorem{la}[thm]{Lemma}
\newtheorem{Defn}[thm]{Definition}
\newtheorem{Remark}[thm]{Remark}
\newtheorem{Conj}[thm]{Conjecture}
\newtheorem{prop}[thm]{Proposition}

\newtheorem{Example}[thm]{Example}
\newtheorem{Number}[thm]{\!\!}
\newenvironment{defn}{\begin{Defn}\rm}{\end{Defn}}
\newenvironment{example}{\begin{Example}\rm}{\end{Example}}
\newenvironment{rem}{\begin{Remark}\rm}{\end{Remark}}

\newenvironment{numba}{\begin{Number}\rm}{\end{Number}}
\newenvironment{proof}{{\noindent\bf Proof.}}%
                  {\nopagebreak\hspace*{\fill}$\Box$\medskip\par}

\newcommand{\cs}{{\mathfrak s}}
\newcommand{\cW}{{\mathcal W}}
\newcommand{\cH}{{\mathcal H}}
\newcommand{\cM}{{\mathcal M}}

\newcommand{\cO}{{\mathcal O}}

\newcommand{\cL}{{\mathcal L}}

\newcommand{\cA}{{\mathcal A}}

\newcommand{\cB}{{\mathcal B}}

\newcommand{\R}{{\mathbb R}}
\newcommand{\C}{{\mathbb C}}
\newcommand{\bD}{{\mathbb D}}
\newcommand{\K}{{\mathbb K}}
\newcommand{\N}{{\mathbb N}}
\newcommand{\Z}{{\mathbb Z}}
\newcommand{\Sph}{{\mathbb S}}
\newcommand{\mto}{\mapsto}
\newcommand{\sub}{\subseteq}
\newcommand{\wt}{\widetilde}
\newcommand{\wh}{\widehat}
\newcommand{\wb}{\overline}
\DeclareMathOperator{\id}{id}

\DeclareMathOperator{\GL}{GL}

\DeclareMathOperator{\pr}{pr}

\DeclareMathOperator{\ev}{ev}

\DeclareMathOperator{\im}{im}

\DeclareMathOperator{\op}{op}

\DeclareMathOperator{\Repart}{Re}
\DeclareMathOperator{\ind}{ind}
\DeclareMathOperator{\res}{res}
\DeclareMathOperator{\diag}{diag}
\DeclareMathOperator{\EXP}{EXP}

\newcommand{\dl}{{\displaystyle \lim_{\longrightarrow}}}
\newcommand{\pl}{{\displaystyle \lim_{\longleftarrow}}}
\newcommand{\cg}{{\mathfrak g}}

\DeclareMathOperator{\Lip}{Lip}
\begin{document}
%
%
\begin{center}
{\bf\Large Birkhoff decompositions
for loop\vspace{2.3mm} groups with coefficient algebras}\\[6mm]
{\bf Helge Gl\"{o}ckner}\vspace{2mm}
\end{center}
\begin{abstract}
\noindent
For certain locally convex topological
algebras $\cA\sub C(\Sph,\C)$ generalizing
the decomposable $R$-algebras of Gohberg
and Fel'dman,
we establish Birkhoff decompositions
in $\GL_n(\cA)$ of the form
$f=f_+Df_-$, where $D(z)$
a diagonal metrix with entries $z^{\kappa_1},\ldots,z^{\kappa_n}$,
and moreover $f_+ \in \GL_n(\cA)$ is the boundary values of a holomorphic
$\GL_n(\C)$-valued function on the open unit disk
and $f_-\in \GL_n(\cA)$ the boundary
value of a $\GL_n(\C)$-valued holomorphic function
on the complement of the closed unit disk in the Riemann sphere.
Corresponding to the case $\kappa_1=\cdots=\kappa_n=0$,
analogous decompositions are available
in Lie groups $G_\cA\leq C(\Sph,G)$
with Lie algebra $\cg\otimes_\C\cA$,
for $G$ a finite-dimensional complex Lie group
on which holomorphic functions separate points.
Birkhoff decompositions in $\cO(\C^\times ,G)$
are also obtained. Some of the results
remain valid of $G$ is a complex Banach-Lie group
on which holomorphic functions
separate points.\vspace{2.4mm}
\end{abstract}
\textbf{MSC 2020 subject classification:}
22E67 (primary);
%
22E65,
%
26E15, 26E20,
%
46E25,
%
46E35,
%
46G20,
%
46J10,
%
%
58D15\\[2mm]
%
%
%
%
\textbf{Key words:}
Loop groups; Wiener-Lie groups;
Birkhoff decomposition;
matrix Riemann problem;
Riemann-Hilbert problem;
differentiability questions
%
%
%
\section{Introduction and statement of the results}
The book ``Loop Groups''
by Pressley and Segal~\cite{PaS}
is a classical source on
Birkhoff decompositions
for loops in $\GL_n(\C)$
or in complexifications
$K_\C\leq \GL_n(\C)$
of compact real Lie groups~$K$
(a topic which started in \cite{Bi1}).
As far as $\GL_n(\C)$ is
concerned, their results include (see
Remark~\ref{complexification}
for $K_\C$):
\begin{numba}\label{Pressley}
(Pressley-Segal 1986)
Each $f\in C^\infty(\Sph,\GL_n(\C))$ can be written as
\[
f=f_+ D\hspace*{.5mm}f_-
\]
where $D\colon \Sph\to \Sph^n$ is a continuous group
homomorphism, $f_+\in C^\infty(\Sph,\GL_n(\C))$ admits a continuous
extension
$f_+^< \colon \wb{\bD}\to\GL_n(\C)$ which is holomorphic
on~$\bD$, and $f_-\in C^\infty(\Sph,\GL_n(\C))$
admits a continuous extension $f_-^>\colon
\hat{\C}\setminus \bD\to\GL_n(\C)$
which is holomorphic on $\hat{\C}\setminus \wb{\bD}$.
The set $C^\infty(\Sph,\GL_n(\C))^+$
of such $f_+$ and the set $C^\infty(\Sph, \GL_n(\C))^\ominus$
of such $f_-$ with $f_-^>(\infty)={\bf 1}$
are embedded submanifolds and Lie subgroups
of the complex Lie group $C^\infty(\Sph,\GL_n(\C))$.
The product map
\[
C^\infty(\Sph,\GL_n(\C))^+\times C^\infty(\Sph,\GL_n(\C))^\ominus
\to C^\infty(\Sph,\GL_n(\C))_0,\;\; (f_+,f_\ominus)\mto f_+f_\ominus
\]
has dense open image and is a $\C$-analytic diffeomorphism
onto the latter.
\end{numba}
Here $\Sph:=\{z\in\C\colon |z|=1\}$
is the circle group, $\wh{\C}:=\C\cup\{\infty\}$
the Riemann sphere and $\bD:=\{z\in\C\colon |z|<1\}$
the open unit disk.
Moreover, ${\bf 1}\in \GL_n(\C)$
is the unit matrix and $C^\infty(\Sph,\GL_n(\C))_0$
the connected component of the neutral element
$z\mto{\bf 1}$ in the Lie group
$C^\infty(\Sph,\GL_n(\C))$ of smooth loops in $\GL_n(\C)$.
\begin{rem}\label{complexification}
(a) Pressley and Segal~\cite{PaS}
get analogues of \ref{Pressley}
also for $C^\omega$ in place of $C^\infty$.\\[1.5mm]
(b) They also show $C^\infty(\Sph,K_\C)^+\times
C^\infty(\Sph,K_\C)^\ominus\to C^\infty(\Sph,K_\C)_0$
is $\C$-analytic diffeomorphism onto a dense open subset.
\end{rem}
The approach of Pressley and Segal is geometric in nature
and fairly complicated. The effort is justified
as it provides valuable additional
information beyond the Birkhoff decompositions,
like stratifications and cell decompositions.\\[2.3mm]
In this article, we propose a complementary
approach to Birkhoff decompositions,
based on two pillars:
\begin{itemize}
\item
The approach of Goberg-Fel'dman \cite{GaF}
to
Birkhoff decompositions for loops in $\GL_n(\C)$
via Gelfand theory of
commutative Banach algebras;
\item
Techniques from infinite-dimensional
differential calculus/holomorphy
and infinite-dimensional Lie theory.
\end{itemize}
More precisely, we shall generalize
the approach by Goberg and Fel'dman,
replacing Banach algebras
with certain locally convex topological
algebras, the continuous inverse algebras.
In this article,
the algebra multiplication
\[
\cA\times \cA\to\cA, \quad (f,g)\mto fg
\]
of a topological algebra is always assumed
jointly continuous.
The following concept
goes back to work of Waelbroeck
(cf.\ \cite{Wa1});
see \cite{CIA} for a recent account with
a view towards infinite-dimensional Lie theory.
\begin{numba}
A locally convex, Hausdorff, associative
complex topological algebra~$\cA$ with unit
is called a \emph{continuous inverse algebra}
(or cia) if the group $\cA^\times$
of invertibe elements is open in~$\cA$
and the inversion map $\cA^\times \to \cA$, $a\mto a^{-1}$
is continuous.
\end{numba}
We recall from Gohber-Fel'dman \cite{GaF}:
\begin{numba}
A Banach algebra $\cA\sub C(\Sph,\C)$
is called an \emph{$R$-algebra}
if the set of restrictions $f|_\Sph$
of all rational functions $f$ with poles off $\Sph$
is dense in $\cA$.
\end{numba}
We shall us the following generalization
beyond Banach algebras.
\begin{defn}
A cia $\cA\sub C(\Sph,\C)$
is called \emph{of type $R$}
(or an $R$-cia) if the set of restrictions $f|_\Sph$
of all rational functions~$f$
with poles off $\Sph$ is dense in $\cA$.
\end{defn}
Then the inclusion map
$\cA\to (C(\Sph,\C),\|\cdot\|_\infty)$ is continuous,
moreover,
$\cA^\times=\cA\cap C(\Sph,\C)^\times$ holds
(so-called isospectrality),
see Lemma~\ref{basicR}.\\[2mm]
Mimicking Gohberg-Fel'dman,
we make the following definitions
if $\cA$ is an $R$-cia:
\begin{defn}
We write $\cA^+\sub\cA$
for the closure of $\C[z]$ in $\cA$
and $\cA^\ominus\sub\cA$ for
the closure of $z^{-1}\C[z^{-1}]$
in $\cA$.
An $R$-cia $\cA$ is called \emph{decomposing}
if $\cA=\cA^+\oplus\cA^\ominus$ as a topological
vector space.
\end{defn}
\begin{numba}
In the following, we write
\begin{equation}\label{fou-koff}
\wh{f}_k:=\int_0^1 e^{2\pi i k t} f(e^{2\pi i t})\, dt
\end{equation}
for the $k$th Fourier coefficient of $f\in C(\Sph,\C)$,
for $k\in\Z$.
\end{numba}
As in \cite{GaF}, one shows:
%
%
\begin{numba}\label{char-deco}
If $\cA\sub C(\Sph,\C)$ is a decomposing $R$-cia
and $f\in \cA$, then the following properties
are equivalent:
\begin{itemize}
\item[(a)]
$f\in\cA^+$
\item[(b)]
$\hat{f}_k=0$ for all $k< 0$;
\item[(c)]
$f$ has continuous extension
$f^<\colon \wb{\bD}\to\C$ which is holomorphic on $\bD$.
\end{itemize}
Likewise, the following conditions
are equivalent:
\begin{itemize}
\item[(d)]
$f\in\cA^\ominus$;
\item[(e)]
$\hat{f}_k=0$ for all $k\geq  0$;
\item[(f)]
$f$ has a continuous extension
$f^> \colon \wh{\C}\setminus \bD\to\C$ which is
holomorphic on $\wh{\C}\setminus \overline{\bD}$,
with $f^>(\infty)=0$.
\end{itemize}
We set $\cA^-:=\cA^\ominus + \C {\bf 1}$
(where ${\bf 1}$ is the constant function
$z\mto 1$). Then $\cA^-$ is the closure of $\C[z^{-1}]$
in~$\cA$ and $f\in\cA$ is in $\cA^-$
if and only if $\hat{f}_k=0$ for all $k>0$.
\end{numba}
\begin{numba}
If $\cA\sub C(\Sph,\C)$ is
an algebra which may not be a decomposing
$R$-cia, we define
$\cA^+$ as the set of all $f\in \cA$
satisfying condition~(c) of \ref{char-deco},
and $\cA^\ominus$
as the set of all $f\in\cA$ satisfying condition~(f)
of \ref{char-deco}.
Then $\cA^+$ is a unital subalgebra
of $\cA$ and $\cA^\ominus$
is a non-unital subalgebra of~$\cA$.
\end{numba}
The following permanence
properties of the class of $R$-cias
are easily checked.
\begin{prop}\label{permafrost}
\begin{itemize}
\item[\rm(a)]
If $\cA_1\supseteq \cA_2\supseteq\cdots$
are $R$-cias and all inclusion maps
are continuous unital algebra homomorphisms,
then
$\bigcap_{n\in\N}\cA_n=\pl\, \cA_n$\vspace{-1mm}
is an $R$-cia which is decomposing if each $\cA_n$ is so.
\item[\rm(b)]
If\vspace{-3mm}
\[
\cA_1\sub \cA_2\sub\cdots
\]
are $R$-algebras and all inclusion maps
are continuous unital algebra homomorphisms,
then
the locally convex direct limit $\cA=\bigcup_{n\in\N}\cA_n$
is an $R$-cia which is decomposing if each $\cA_n$ is so.
\end{itemize}
\end{prop}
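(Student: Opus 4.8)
The plan is to treat (a) and (b) in parallel and, in each case, to verify the three assertions separately: that $\cA$ is a cia, that it is of type~$R$, and---under the additional hypothesis---that it is decomposing. Both cases rest on a common backbone. By Lemma~\ref{basicR} each $\cA_n$ is isospectral and the inclusion $\cA_n\to C(\Sph,\C)$ is continuous; composing with $\cA\hookrightarrow\cA_1$ in case~(a), respectively invoking the universal property of $\dl\cA_n$ in case~(b), yields a continuous injection $j\colon\cA\to C(\Sph,\C)$, so that $\cA$ is in particular Hausdorff. Isospectrality passes to $\cA$: a nowhere-vanishing $f\in\cA$ lies in $\cA_n^\times$ for every relevant index~$n$, whence $f^{-1}$ lies in each such $\cA_n$ and therefore in $\cA$. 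Consequently $\cA^\times=\cA\cap C(\Sph,\C)^\times=j^{-1}\bigl(C(\Sph,\C)^\times\bigr)$ is open, because $j$ is continuous and the unit group of the Banach algebra $C(\Sph,\C)$ is open.

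It remains to supply the topological-algebra structure and the continuity of inversion. In both cases $\cA$ is a Hausdorff locally convex space (a projective, respectively locally convex direct, limit of such) with unit~${\bf 1}$. In case~(a), multiplication and inversion are handled by the universal property of $\pl\cA_n$: a map into the projective limit is continuous iff all its components are, so $m\colon\cA\times\cA\to\cA$ is continuous, and inversion is continuous because it factors as $\cA^\times\hookrightarrow\cA_n^\times\to\cA_n$ through the continuous inversion of each cia~$\cA_n$. In case~(b), the multiplications $m_n$ induce a separately continuous multiplication on the (LB)-space $\cA$, which is therefore jointly continuous by the standard theorem on separately continuous bilinear maps on barrelled direct limits.

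For the type-$R$ property, the restrictions of rational functions with poles off~$\Sph$ belong to $\cA$ (they belong to every $\cA_n$) and are dense in each $\cA_n$; in case~(b) a sequence approximating $f\in\cA_N$ is pushed forward through the continuous inclusion $\cA_N\to\cA$, while in case~(a) one uses that every basic neighbourhood in $\pl\cA_n$ is the preimage of a neighbourhood in a single $\cA_N$. Either way the rational functions are dense in $\cA$. For the decomposing assertion, let $P_n\colon\cA_n\to\cA_n$ be the continuous projection onto $\cA_n^+$ along $\cA_n^\ominus$; by~\ref{char-deco} it is the truncation retaining the Fourier coefficients $\hat f_k$ with $k\geq0$, so the $P_n$ are mutually compatible and induce a projection $P$ on $\cA$, continuous by the same universal-property/LB arguments. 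With $Q=\id-P$ one identifies $P(\cA)$ with the closure $\cA^+$ of $\C[z]$ and $Q(\cA)$ with the closure $\cA^\ominus$ of $z^{-1}\C[z^{-1}]$: both images are closed and contain the respective polynomial spaces, and any $f$ fixed by $P$ already lies in some $\cA_N^+=\overline{\C[z]}^{\cA_N}$ and is thus approximated by polynomials in $\cA$. Hence $\cA=\cA^+\oplus\cA^\ominus$ topologically.

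The one genuinely non-formal step is the continuity of inversion in the direct-limit case~(b). Reducing as usual to continuity at~${\bf 1}$ via $f^{-1}=({\bf 1}-h)^{-1}f_0^{-1}$ with $h={\bf 1}-f_0^{-1}f\to0$, one must control the Neumann series $\sum_{k\geq1}h^k$ in the direct-limit topology, where smallness is no longer measured by a single Banach norm. I expect this to be the main obstacle: it should be resolved using regularity of the countable locally convex direct limit of the Banach algebras $\cA_n$, so that the convergence takes place within a single step. No such difficulty arises in the projective case, where every neighbourhood already descends from one $\cA_N$ and inversion factors through the individual cias.
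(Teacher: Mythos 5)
You correctly isolate the one non-formal step, but your proposed fix for it does not work, and this is a genuine gap. Continuity of inversion on the (LB)-algebra $\cA=\bigcup_n\cA_n$ in case~(b) cannot be obtained from regularity of the inductive limit plus a Neumann series. First, a Hausdorff countable inductive limit of Banach spaces need not be regular, so regularity is not available for free. Second, and more decisively, regularity is a statement about \emph{bounded} sets, whereas continuity of $h\mto({\bf 1}-h)^{-1}$ at ${\bf 1}$ requires control over \emph{$0$-neighbourhoods}: given a target neighbourhood $V$ of $0$, say the absolutely convex hull of $\bigcup_n\delta_n B_n$ with $B_n$ the unit ball of $\cA_n$ and $\delta_n$ rapidly shrinking, you must produce a $0$-neighbourhood $U$ of $\cA$ with $({\bf 1}-U)^{-1}\sub{\bf 1}+V$. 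Any such $U$ is a hull of sets $\ve_n B_n$, and a typical $h\in U$ is a convex combination of elements from \emph{different} steps; it lies in $\cA_N$ for $N$ the largest index occurring, but its $\|\cdot\|_N$-norm is controlled only through the inclusion constants $\|\cA_{n}\to\cA_N\|$, which are unbounded in general. So one cannot guarantee $\|h\|_N<1$, the Neumann series need not converge in a predictable step, and even when it does, its sum lands in a ball of $\cA_N$ that need not lie in $V$. In short: ``small in the limit topology'' does not mean ``small in one Banach step,'' which is exactly the conflation your sketch relies on.

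The paper closes precisely this gap by a different tool: by the theorem of Akkar--Nacir \cite{AaN} (see also Dierolf--Wengenroth \cite{DaW}), a countable locally convex inductive limit of Banach algebras is locally $m$-convex in the sense of Michael \cite{mcx}; since $\cA^\times=\iota^{-1}(C(\Sph,\C)^\times)$ is open (your isospectrality argument, which matches the paper's), local $m$-convexity then yields continuity of both the multiplication \emph{and} the inversion, because for a submultiplicative continuous seminorm the Neumann estimate works seminorm by seminorm. (Compare the Remark following the proposition: when the steps are merely $R$-cias the paper must \emph{assume} continuous multiplication and invoke \cite{HaW} -- a sign that these continuity statements are not formal.) Your route to joint continuity of the multiplication via separately continuous bilinear maps on barrelled (DF)-spaces is a legitimate alternative for that one point, but it gives no purchase on inversion. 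The remainder of your proposal -- case~(a) via the universal property, density of the rational functions, and the compatible Fourier-truncation projections for the decomposing property -- agrees in substance with what the paper declares ``easily checked.''
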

Note that, in the situation of~(b),
the inclusion map $\iota: \cA\to C(\Sph,\C)$
is continuous on each $\cA_n$
and hence continuous on the direct limit locally convex space~$\cA$, being linear.
Since $C(\Sph,\C)$ is Hausdorff, so is $\cA$.
Since all of the inclusions $\cA_n\to C(\Sph,\C)$
are isospectral,
\[
\cA^\times =\iota^{-1}(C(\Sph,\C)^\times)
\]
follows, whence $\cA^\times$ is open in $\cA$.
By \cite{AaN} or \cite{DaW}, $\cA$
is locally $m$-convex in the sense of Michael~\cite{mcx},
entailing that the inversion map $\cA^\times \to \cA$
and the algebra multiplication
$\cA\times\cA\to\cA$ are continuous.
Hence $\cA$ is a cia.
Since $\C[z,z^{-1}]$
is dense in each $\cA_n$, it is dense
in~$\cA$.
All rational functions with poles off $\Sph$
are in $\cA_1$ and hence in~$\cA$,
whence $\cA$ is an $R$-cia.
\begin{rem}
If $\cA_1\sub \cA_2\sub\cdots$
are merely $R$-cias in Proposition~\ref{permafrost}\,(b),
then the conclusions still hold if we assume,
moreover, that the algebra multiplication
$\cA\times\cA\to\cA$ is continuous.
In fact, $\cA$ is locally $m$-convex in this case by \cite{HaW},
and now we can complete the argument
as before.
\end{rem} 
\begin{numba}
Let us compile some
topological algebras $\cA$ which may
be interesting algebras of
coefficients for loop groups,
and their properties.
All of them are subalgebras of $C(\Sph,\C)$
and contain the algebra
$\C[z,z^{-1}]$ of Laurent polynomials as
a dense subalgebra,
\[
\C[z,z^{-1}]\sub \cA\sub C(\Sph,\C).
\]
\begin{itemize}
\item[(a)]
The Banach algebra
$(C(\Sph,\C),\|\cdot\|_\infty)$
of continuous complex-valued functions on~$\Sph$
is an $R$-algebra.
It is a classical fact thet $C(\Sph,\C)$
is not decomposing (see \cite{GaF}).
\item[(b)]
The Wiener algebra
$\cW :=\{f\in C(\Sph,\C)\colon (\hat{f}_k)_{k\in\Z}
\in \ell^1\}$ is a decomposing $R$-algebra
using the norm given by $\|f\|_\cW:=\|(\wh{f}_k)_{k\in\Z}\|_{\ell^1}=\sum_{k\in \Z}
|\wh{f}_k|$ (see \cite{GaF}).
Here
\begin{equation}\label{simply-plus}
f^<(z):=\sum_{k=0}^\infty \wh{f}_k z^k\quad\mbox{for $z\in \wb{\bD}$}
\end{equation}
for $f\in \cW^+$ and
\begin{equation}\label{simply-minus}
f^>(z):=\sum_{k=1}^\infty\wh{f}_{-k}z^{-k}\quad\mbox{for $z\in \C\setminus \bD$}
\end{equation}
for $f\in \cW^\ominus$, with $f^>(\infty):=0$.
\item[(c)]
For $m\in[0,\infty[$,
the set
\[
\cW(m) :=\left\{f\in C(\Sph,\C)\colon \sum_{k\in\Z} |k|^m|\hat{f}_k|<\infty\right\}
\]
is a unital subalgebra of $\cW$.
The norm $f\mto\sum_{k\in\Z}\max\{|k|^m,1\}|\hat{f}_k|$
turns $\cW(m)$ into a Banach space and makes
the algebra multiplication continuous.
Let $L_f(g):=fg$ for $f,g\in\cW(m)$.
Then $\|f\|_{\cW(m)}:=\|L_f\|_{\op}$
defines an equivalent norm on $\cW(m)$ which is submultiplicative and turns
$\cW(m)$ into an $R$-algebra which is decomposing.
%
%
We can think of the weighted Wiener algebras
as Sobolev-like spaces.
\item[(d)]
The Fr\'{e}chet algebra $C^\infty(\Sph,\C)$
of all smooth complex-valued functions on the circle
can be regarded as the projective limit
\[
C^\infty(\Sph,\C)\,=\,
\bigcap_{m\in\N_0}\cW(m)\, =\, \pl \, \cW(m)
\]
as a locally convex space, as the intersection
of the weighted $\ell^1$-spaces corresponding to $\cW(m)$
is the Fr\'{e}chet space $\cs$ of rapidly decreasing
two-sided sequences, which equals the set of
Fourier transforms $(\wh{f}_k)_{k\in\Z}$
of smooth functions $f\in C^\infty(\Sph,\C)$.
By Proposition~\ref{permafrost}\,(a),
$C^\infty(\Sph,\C)$ is a decomposing $R$-cia.
\item[(e)]
The Silva space
$C^\omega(\Sph,\C)=\dl \,\cO_b(A_n,\C)$
of real-analytic $\C$-valued functions
is a decomposing $R$-cia.
Here $\cO_b(\cA_n,\C)$ denotes the Banach algebra
of bounded holomorphic functions on the open annulus
\[
A_n:=\left\{z\in \C\colon \,1-\frac{1}{n} <|z|<1+\frac{1}{n}\right\},
\]
endowed with the supremum norm.
Actually, it will be useful later to use
the following equivalent norm
which also makes $\cO_b(A_n,\C)$ a Banach algebra:
\[
\|f\|_n:=\max\{\|f\|_\infty,\|f|_\Sph\|_\cW\}
\]
where $\|\cdot\|_\cW$ is the norm of the Wiener algebra.\\[2.3mm]
To establish the asserted properties, note that
$C^\omega(\Sph,\C)$
is locally $m$-convex by \cite{AaN,DaW}
and has an open unit group,
as the inclusion map $\iota\colon C^\omega(\Sph,\C)\to C(\Sph,\C)$
is continuous and linear
and
\[
\iota^{-1}(C(\Sph,\C)^\times)=C^\omega(\Sph,\C),
\]
using that $1/f$ is real analytic for each
real-analytic function $f\colon \Sph\to\C$
such that $0\not\in f(\Sph)$.
The principal and regular parts
of the Laurent series of $f\in \cO_b(A_n,\C)$
converge uniformly on $A_{n+1}$,
entailing that $\C[z,z^{-1}]$
(and hence also rational functions with poles off $\Sph$)
are dense in $C^\omega(\Sph,\C)$.
\item[(f)]
Finally, we shall consider the Fr\'{e}chet
algebra
$\cO(\C^\times,\C)$ of all
holomorphic functions
on the punctured plane, endowed
with the topology of compact convergence.
This algebra is not a cia
as its unit group is not open.
Still, it is locally $m$-convex and
\begin{equation}\label{addi}
\cO(\C^\times ,\C)=\cO(\C^\times,\C)^+\oplus \cO(\C^\times,\C)^\ominus
\end{equation}
holds as a locally convex space.
In fact, as the decomposition of a holomorphic
function
into its regular part and principal part is unique,
the map in (\ref{addi}) is bijective.
As its inverse map is continuous
and both sides are Fr\'{e}chet spaces,
the map is an isomorphism of topological
vector spaces by the open mapping theorem.
\end{itemize}
\end{numba}
%
%
%
Concerning loops in $\GL_n(\C)$,
we obtain the following
results,
whose special cases for $R$-algebras
can be found in~\cite{GaF}.
Recall that a locally convex topological
vector space is called \emph{Mackey complete}
if every Mackey-Cauchy sequence
converges in it (see \cite{KaM}
for an in-depth discussion of this property).
Every sequentially complete
locally convex space (in which every Cauchy sequence
converges) is Mackey complete.
\begin{prop}\label{birkho-1}
If $\cA$ is a decomposing $R$-cia
which is Mackey complete,
then every $g\in \cA^\times$ can be written as
\[
g=g_+(\id_\Sph)^\kappa g_-\vspace{-1mm}
\]
with $g_\pm\in (\cA^\pm)^\times$ and some $\kappa\in \Z$.
Moreover, $\kappa=\ind_0(g)$
is the winding number of~$g$
with respect to the origin $0\in \C$.
\end{prop}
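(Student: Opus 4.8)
The plan is to realize the factorization through the exponential map of the commutative unit group $\cA^\times$, thereby reducing the Birkhoff problem to the question of which invertibles are exponentials. Since $\cA$ is a Mackey complete cia, by \cite{CIA} the series $\exp(a)=\sum_{n\geq 0}a^n/n!$ converges and defines a map $\exp\colon\cA\to\cA^\times$ which is a local diffeomorphism at $0$; as $\cA$ is commutative we have $\exp(a+b)=\exp(a)\exp(b)$, so $\exp(\cA)$ is a subgroup of $\cA^\times$ which, containing the identity neighbourhood $\exp(V)$, is open. By isospectrality every $f\in\cA^\times$ is zero-free on $\Sph$, so the winding number $w(f):=\ind_0(f)\in\Z$ is defined; it is a group homomorphism $w\colon\cA^\times\to\Z$ (winding numbers add under products), and $w\circ\exp=0$, because $t\mapsto\exp(ta)$ is a path in $\cA^\times\sub C(\Sph,\C)^\times$ from ${\bf 1}$ to $\exp(a)$ and $w$ is locally constant on $C(\Sph,\C)^\times$. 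Hence $\exp(\cA)\sub\ker w$. Note also that $\id_\Sph\in\cA^\times$ with $w(\id_\Sph)=1$.

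The key step, which I expect to be the main obstacle, is the converse inclusion: every $h\in\cA^\times$ with $w(h)=0$ lies in $\exp(\cA)$. One cannot simply take a holomorphic logarithm of $h$ by functional calculus, since the spectrum $h(\Sph)$ may enclose $0$. Instead I would exploit that $\cA$ is an $R$-cia. Using density of the rational functions with poles off $\Sph$ together with the openness of $\exp(V)$, choose such a rational $r$ with $r|_\Sph=h\,u$ for some $u\in\exp(V)\sub\exp(\cA)$; then $r|_\Sph\in\cA^\times$ (so $r$ is zero-free on $\Sph$), $w(r|_\Sph)=w(h)=0$, and since $\exp(\cA)$ is a group it suffices to show $r|_\Sph\in\exp(\cA)$. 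Factoring $r$ over $\C$ and pulling out zeros and poles one at a time, $r|_\Sph$ becomes a product of a nonzero constant (which is an exponential), of the elementary factors $1-z/\alpha$ ($|\alpha|>1$) and $1-\alpha z^{-1}$ ($|\alpha|<1$) and their inverses, and of a net power $\id_\Sph^{\,m}$ whose exponent $m$ equals $w(r|_\Sph)=0$. Each non-inverted elementary factor has spectrum a circle of radius $<1$ centred at $1$, hence lies in $\{\Repart>0\}\sub\C\setminus(-\infty,0]$; so a holomorphic branch of $\log$ is defined on a neighbourhood of its spectrum, and the holomorphic functional calculus of the cia $\cA$ (available by \cite{CIA}) produces its logarithm in $\cA$, placing it in $\exp(\cA)$. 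As $\exp(\cA)$ is a group, the inverse factors lie there as well, whence $r|_\Sph$ and therefore $h$ lie in $\exp(\cA)$.

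With the key step in hand the proposition follows quickly. Put $\kappa:=\ind_0(g)$ and $h:=g\,\id_\Sph^{\,-\kappa}\in\cA^\times$; since $w(\id_\Sph)=1$ we get $w(h)=0$, so $h=\exp(a)$ for some $a\in\cA$. Using that $\cA=\cA^+\oplus\cA^\ominus$ is decomposing, write $a=a_++a_\ominus$ with $a_+\in\cA^+$ and $a_\ominus\in\cA^\ominus$. As $\cA^+$ is a closed unital subalgebra and $\cA^\ominus$ a closed non-unital subalgebra, the exponential series show $g_+:=\exp(a_+)\in(\cA^+)^\times$ and $g_-:=\exp(a_\ominus)\in({\bf 1}+\cA^\ominus)^\times\sub(\cA^-)^\times$, with inverses $\exp(-a_+)$ and $\exp(-a_\ominus)$. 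By commutativity $h=\exp(a_+)\exp(a_\ominus)=g_+g_-$, whence
\[
g=h\,\id_\Sph^{\,\kappa}=g_+\,\id_\Sph^{\,\kappa}\,g_-,
\]
the asserted form. Finally, applying $w$ to any such factorization and using that $g_+,g_+^{-1}\in\cA^+$ are zero-free on $\wb{\bD}$ and $g_-,g_-^{-1}\in\cA^-$ zero-free on $\wh{\C}\setminus\bD$ (so that the argument principle gives $w(g_+)=w(g_-)=0$) yields $w(g)=\kappa$; this both forces $\kappa$ to be uniquely determined and identifies it as $\ind_0(g)$.
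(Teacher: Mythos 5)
Your proof is correct, and it rests on the same two pillars as the paper's own argument: the functional-calculus exponential of the Mackey complete cia, making $\exp(\cA)$ an open subgroup of the abelian group $\cA^\times$, and density of rational (resp.\ Laurent) loops to pass from this open subgroup to all of $\cA^\times$; also the splitting $\exp(x)=\exp(x_+)\exp(x_\ominus)$ along $\cA=\cA^+\oplus\cA^\ominus$ is identical. Where you genuinely diverge is in the treatment of the dense set. The paper factors an invertible Laurent polynomial directly: writing $g=z^\kappa p$ with $p(0)\neq 0$ and splitting the roots of $p$ by modulus, it exhibits $g_+$, $z^\kappa$ and $g_-$ explicitly, with no logarithms of these factors ever taken, and then multiplies this with the exp-factorization of the remaining element of the open coset. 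You instead prove the sharper structural statement $\exp(\cA)=\ker(\ind_0)$, by reducing a rational loop to the elementary factors $1-z/\alpha$ ($|\alpha|>1$) and $1-\alpha z^{-1}$ ($|\alpha|<1$) and taking their functional-calculus logarithms --- legitimate, since by isospectrality their spectra are circles of radius $<1$ about $1$, inside the right half-plane. What your route buys is the cleaner group-theoretic picture $\cA^\times/\exp(\cA)\cong\Z$ generated by $\id_\Sph$, and, unlike the paper's proof (which leaves this implicit), an explicit verification that $\kappa=\ind_0(g)$ and that $\kappa$ in any such factorization is unique, via $\ind_0(g_\pm)=0$ from the argument principle. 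Two small points to polish: in a cia that need not be locally $m$-convex it is safer to define $\exp$ and your logarithms through the holomorphic functional calculus of \cite{CIA} rather than asserting convergence of $\sum_{n\geq 0}a^n/n!$ outright (the series does converge, but in the Mackey sense, e.g.\ because $\lambda\mapsto\exp(\lambda a)$ is entire and Mackey completeness yields a globally convergent Taylor expansion); and the memberships $\exp(a_+)\in(\cA^+)^\times$ and $\exp(a_\ominus)\in{\bf 1}+\cA^\ominus$ deserve a line --- with Mackey convergence of the series they follow since the partial sums lie in the closed subalgebras $\cA^+$ resp.\ ${\bf 1}+\cA^\ominus$ --- though the paper asserts them with the same brevity.
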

\begin{prop}\label{birkho-n}
Let $\cA$ be an $R$-algebra.
Then each $g\in \GL_n(\cA)$ can be written in the form
\[
g=g_+ D \, g_-
\]
where $g_+\in \GL_n(\cA^+)$,
$g_-\in \GL_n(\cA^-)$ and
$D(z)=\diag(z^{\kappa_1},\ldots, z^{\kappa_n})$
with integers
$\kappa_1\geq\kappa_2\geq\cdots\geq \kappa_n$.
The $n$-tuple $(\kappa_1,\ldots,\kappa_n)$
of ``partial indices'' is unique.
The same conclusion holds
if $\cA$ is any Mackey complete, decomposing $R$-cia
and $\GL_n(\cA_+)\GL_n(\cA_-)$ is an identity neighbourhood
in~$\GL_n(\cA)$.
\end{prop}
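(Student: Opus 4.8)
The plan is to argue by induction on $n$, with the scalar case $n=1$ as base and an inductive step that strips off a single partial index, reducing an $n\times n$ problem to an $(n-1)\times(n-1)$ one. For a general $R$-algebra the base case is the classical scalar factorization $g=g_+z^\kappa g_-$ of Gohberg--Fel'dman \cite{GaF} (which holds even when $\cA$ is not decomposing, as for $\cA=C(\Sph,\C)$), and for a Mackey complete decomposing $R$-cia it is Proposition~\ref{birkho-1}. Throughout I would use isospectrality, $\cA^\times=\cA\cap C(\Sph,\C)^\times$ (Lemma~\ref{basicR}), the density of rational functions with poles off $\Sph$, and the Fourier-coefficient characterisations of $\cA^+,\cA^-$ from \ref{char-deco}.

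For the inductive step I would read the factorization columnwise: from $g\,g_-^{-1}=g_+D$ the $j$th column $c_j\in(\cA^-)^n$ of $g_-^{-1}$ must satisfy $g\,c_j=z^{\kappa_j}d_j$ with $d_j\in(\cA^+)^n$, so that $c_j$ is a vector holomorphic off $\wb{\bD}$ whose image under $g$ becomes holomorphic on $\bD$ after division by a power of $z$. The core is to solve such homogeneous Riemann problems and, in particular, to produce a nonzero solution of smallest order: among all $c\in(\cA^-)^n\setminus\{0\}$ with $g\,c\in z^\kappa(\cA^+)^n$, minimise $\kappa$. Solvability of the relevant (inhomogeneous and homogeneous) Riemann problems in a Banach $R$-algebra is the Gohberg--Fel'dman Gelfand-theoretic analysis \cite{GaF}; granting it, the admissible orders are integers bounded below, so the minimum is attained by an extremal vector $c$. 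Completing $c$ to a column of a matrix in $\GL_n(\cA^-)$ and $z^{-\kappa}g\,c$ to a column of a matrix in $\GL_n(\cA^+)$, I would conjugate $g$ into block-triangular form with $z^\kappa$ in a diagonal corner and an $(n-1)\times(n-1)$ block in $\GL_{n-1}(\cA)$. After applying the induction hypothesis to that block, clearing the remaining off-diagonal block reduces to an additive decomposition problem, and the minimality of $\kappa$ (it does not exceed any of the other indices) is exactly what makes that problem solvable; a final conjugation by a constant permutation matrix, which lies in both $\GL_n(\cA^+)$ and $\GL_n(\cA^-)$, restores the order $\kappa_1\ge\cdots\ge\kappa_n$.

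For a Mackey complete decomposing $R$-cia I would not rerun a separate argument but isolate where the Banach structure entered the proof above, and supply each ingredient from the hypotheses. The additive Riemann problems are solved by the continuous projection $\cA\to\cA^+$ along $\cA^\ominus$ that ``decomposing'' provides (together with \ref{char-deco}); the scalar multiplicative factorization is Proposition~\ref{birkho-1}, where Mackey completeness guarantees convergence of the logarithm/exponential series used there; and the completion of a primitive extremal vector to an invertible frame over $\cA^+$, resp.\ $\cA^-$, is where the assumption that $\GL_n(\cA^+)\GL_n(\cA^-)$ is an identity neighbourhood in $\GL_n(\cA)$ is used, allowing a vector near a standard basis vector to be completed by a perturbation of the identity and the general case to be reduced to that.

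Uniqueness of $(\kappa_1,\dots,\kappa_n)$ I would settle last. Two factorizations give $u\,D=D'\,v$ with $u=(g_+')^{-1}g_+\in\GL_n(\cA^+)$ and $v=g_-'g_-^{-1}\in\GL_n(\cA^-)$, hence $u_{ij}=z^{\kappa_i'-\kappa_j}v_{ij}$; comparing the supports of Fourier coefficients via \ref{char-deco} forces $u_{ij}=0$ whenever $\kappa_j>\kappa_i'$ and pins the two decreasing tuples together. (Intrinsically, each $\kappa_i$ is read off from the jumps in the dimensions of the spaces of extremal vectors used above, so it depends only on $g$.) The main obstacle I anticipate is precisely the frame-completion step: showing that the extremal primitive vector completes to a matrix invertible \emph{over $\cA^+$, resp.\ $\cA^-$}, rather than merely over $C(\Sph,\C)$ or over a field of fractions. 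This is where the $R$-structure, isospectrality, and the bookkeeping of orders must be combined, where the identity-neighbourhood hypothesis does its work in the cia case, and where the noncommutativity of matrices makes the argument substantially harder than the scalar case.
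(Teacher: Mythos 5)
Your outline is the classical Birkhoff--Grothendieck/Gohberg--Krein induction on $n$, and it is a genuinely different route from the paper's, but as it stands it has gaps at exactly its load-bearing points. The paper's own proof is a two-step citation: the first assertion \emph{is} Theorem~2.1 in \cite[Chapter~VIII]{GaF}, whose mechanism is rational approximation -- one factorizes rational matrix functions explicitly by algebraic column operations, factorizes near-identity matrices canonically (their Lemma~5.1, proved by Banach-space arguments, shows $\GL_n(\cA^+)\GL_n(\cA^-)$ is an identity neighbourhood), and combines the two via density of the rationals; for the cia case the paper merely observes that the ``special case'' of that proof goes through verbatim once Lemma~5.1 is replaced by the stated identity-neighbourhood hypothesis. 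Note that density of rational functions with poles off $\Sph$ -- the defining property of an $R$-algebra and the only handle these algebras give you -- never enters your argument; that absence is the structural reason your hard steps resist.

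Concretely: (i) The core existence statement -- a nonzero $c\in(\cA^-)^n$ with $g\,c\in z^\kappa(\cA^+)^n$, extremal $\kappa$ attained -- is ``granted'' to Gohberg--Fel'dman's ``Gelfand-theoretic analysis'', but what that analysis actually delivers is Lemma~\ref{basicR} (isospectrality, $\cM\cong\Sph$) and the scalar case; solvability of the vector-valued homogeneous Riemann problem \emph{with solutions lying in $\cA^-$}, rather than in a Hardy space, is essentially equivalent to the theorem you are proving, so this step is circular as written. (ii) The frame-completion step you yourself flag as ``the main obstacle'' is left unresolved, and the identity-neighbourhood hypothesis does not obviously supply it: that hypothesis factorizes matrices near ${\bf 1}$, which is a different statement from completing a single extremal column to an element of $\GL_n(\cA^-)$. (iii) Your clearing-of-the-off-diagonal-block step needs additive splittings of the form $b=b_++z^m b_-$ for arbitrary $b\in\cA$, i.e.\ the continuous projections that only a \emph{decomposing} algebra provides; since the first assertion is stated for a general $R$-algebra (and, e.g., $C(\Sph,\C)$ is an $R$-algebra with $\cA^++\cA^\ominus$ a proper subspace), your induction has nothing to run on precisely in the generality where the paper simply quotes \cite{GaF}. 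Your uniqueness argument via $u\,D=D'\,v$ and comparison of Fourier supports is sound and is in substance Lemma~\ref{uni2}, which the paper likewise imports from \cite[Chapter~VIII, Theorems~1.1 and 1.2]{GaF}.
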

\begin{numba}
If $\cA$ is a complex locally convex commutative topological
algebra and $\cg$ a finite-dimensional
complex Lie algebra,
we endow
\[
\cg_\cA:=\cg\otimes_\C \cA
\]
with the projective tensor topology
(as in \ref{proj-tensor}).
If $\cA$ is a subalgebra
of $C(\Sph,\C)$, we have
\[
\cg_\cA =\cg\otimes_\C \cA\sub \cg\otimes_\C C(\Sph,\C)
=C(\Sph,\cg)
\]
and we consider $\cg_A$ as a subalgebra of
$C(\Sph,\cg)$. If the inclusion map
$\cA\to C(\Sph,\C)$ is continuous,
then also the inclusion map
$\cg_\cA\to C(\Sph,\cg)$.
\end{numba}
\begin{numba}\label{previous}
If $G$ is a complex Lie group
modeled on a Banach space
(a complex Banach--Lie group),
with Lie algebra $\cg:=T_eG$
and exponential function
$\exp_G\colon \cg\to G$,
then $C(\Sph,G)$ can be made
a complex Banach--Lie group with Lie algebra
$C(\Sph,\cg)$, as is well known.
Its exponential function is the map
\[
C(\Sph,\cg)\to C(\Sph,G),\quad f\mto \exp_G\circ f.
\]
If $\cA$ is an $R$-algebra,
then the inclusion map $\cg_\cA\to C(\Sph,\cg)$
is an injective continuous Lie algebra homomorphism
between Banach--Lie algebras.
Hence
\[
G_\cA:=\langle \exp_G\circ f\colon f\in \cA\rangle
\]
can be made a Banach--Lie group with Lie algebra $\cg_\cA$,
see \cite{vEK}.
If $\cA$ is decomposing, we get
analogous subgroups
$G_\cA^+$ and $G_\cA^\ominus$ in $C(\Sph,G)$
which are Banach-Lie groups with Lie algebras
$\cg_\cA^+:=\cg_{\cA^+}$ and $\cg_\cA^-:=\cg_{\cA^\ominus}$, respectively.
The inclusion maps $G_\cA^+\to G_\cA$
and $G_\cA^\ominus\to G_\cA$
are complex analytic group homomorphisms.
\end{numba}
\begin{numba}
If $\cA$ is a Mackey complete $R$-cia,
we get a Lie group
$G_\cA\sub C(\Sph,G)$ with Lie algebra $\cg_\cA$
and Lie groups $G_\cA^+$ and $G_\cA^\ominus$
which are subgroups of $C(\Sph,G)$
if $\cA$ is, moreover, decomposing.
The latter have Lie algebras $\cg_\cA^\oplus$
and $\cg_\cA^\ominus$, respectively
and properties as described in~\ref{previous}
(see \ref{details-BCH}).
\end{numba}
Note that $G_\cA$
is the connected component $C^\infty(\Sph,G)_0$
of $C^\infty(\Sph,G)$
if $\cA=C^\infty(\Sph,\C)$.
Likewise,
$G_\cA=C^\omega(\Sph,G)_0$
if $\cA=C^\omega(\Sph,\C)$.
\begin{numba}
If $\cg$ is a complex Banach--Lie algebra with norm $\|\cdot\|$,
we can consider the Fr\'{e}chet--Lie algebra
\[
C^\infty(\Sph,\cg)
\]
of smooth $\cg$-valued functions on~$\Sph$,
endowed with the compact-open $C^\infty$-topology
(as in \cite{GaN}).
We can also consider
the locally convex topological Lie algebra
\[
C^\omega(\Sph,\cg)=\dl \cO_b(A_n,\cg)
\]
of real-analytic $\cg$-valued functions on~$\Sph$,
the Wiener--Lie algebra
\[
\cg_\cW:=\{f\in C(\Sph,\cg)\colon (\wh{f}_k)_{k\in\Z}\in \ell^1(\Z,\cg)\}
\]
and the weighted Wiener--Lie algebras
\[
\cg_{\cW(m)}:=\left\{
f\in C(\Sph,\cg)\colon \sum_{k\in \Z}|k|^m\|\hat{f}_k\|<\infty\right\}
\]
for $m\in [0,\infty[$,
where $\hat{f}_k$ is defined as the $\cg$-valued
integral~(\ref{fou-koff}).
The Wiener--Lie algebras and weighted
Wiener--Lie algebras
are Banach--Lie algebras.
If $\cg=T_eG$
for some complex Banach--Lie group~$G$,
we define
\[
G_\cW,\; G_\cW^+,\; G_\cW^\ominus,\;
G_{\cW(m)},\; G_{\cW(m)}^+,\;\mbox{and}\;
G_{\cW(m)}^\ominus
\]
as in~\ref{previous}.
\end{numba}
The inverse function theorem
readily implies the following
(see Section~\ref{sec-G}).
\begin{prop}\label{deco-easy}
Let $\cA$ be a decomposing $R$-algebra
and $G$ be a finite-dimensional
complex Lie group,
or
$\cA=\cW(m)$ with $m\in [0,\infty[$
and $G$ be a complex
Banach--Lie group.
Assume that $\cO(G,\C)$ separates points
on~$G$.
Then
$G_\cA^+G_\cA^\ominus$ is open in $G_\cA$ and the
product map
\[
\pi\colon G_\cA^+\times G_\cA^\ominus\to G_\cA^+ G_\cA^\ominus
\]
is a $\C$-analytic diffeomorphism.
\end{prop}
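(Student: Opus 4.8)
The plan is to apply the inverse function theorem for complex Banach--Lie groups, so I would first record that in both cases at hand the groups $G_\cA$, $G_\cA^+$ and $G_\cA^\ominus$ are complex Banach--Lie groups. Indeed, in the first case $\cA$ is a Banach algebra and $\cg=T_eG$ is finite-dimensional, so $\cg_\cA\isom\cA^{\dim\cg}$ is a Banach space; in the second case $\cA=\cW(m)$ is a Banach algebra and $\cg$ is a complex Banach space, whence the (completed) projective tensor product $\cg_\cA=\cg\otimes_\C\cA$ is again a Banach space. Since $\cA=\cA^+\oplus\cA^\ominus$ as a topological vector space (recall that $\cW(m)$ is a decomposing $R$-algebra) and the projective tensor product is additive in this finite direct sum, I obtain a topological direct sum decomposition $\cg_\cA=\cg_\cA^+\oplus\cg_\cA^-$ of Banach spaces, where $\cg_\cA^+=\cg\otimes_\C\cA^+$ and $\cg_\cA^-=\cg\otimes_\C\cA^\ominus$ are the Lie algebras of $G_\cA^+$ and $G_\cA^\ominus$ from \ref{previous}.

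First I would show that $\pi$ is a local $\C$-analytic diffeomorphism at $(e,e)$. The map $\pi$ is $\C$-analytic, being the composition of the analytic inclusions $G_\cA^\pm\to G_\cA$ of \ref{previous} with the multiplication map of $G_\cA$. Its differential at $(e,e)$ is the addition map $\cg_\cA^+\times\cg_\cA^-\to\cg_\cA$, $(X,Y)\mto X+Y$ (with $\cg_\cA^\pm$ identified with their images in $\cg_\cA$), which is a toplinear isomorphism by the direct sum decomposition above; hence the inverse function theorem yields an open identity neighbourhood on which $\pi$ restricts to a $\C$-analytic diffeomorphism. To propagate this to an arbitrary $(f_+,f_-)\in G_\cA^+\times G_\cA^\ominus$, I would use the relation $\pi\circ\lambda=\Phi\circ\pi$, where $\lambda(a_+,a_-):=(f_+a_+,a_-f_-)$ and $\Phi(g):=f_+gf_-$ denote the evident $\C$-analytic diffeomorphisms of $G_\cA^+\times G_\cA^\ominus$ and of $G_\cA$, respectively, with $\lambda(e,e)=(f_+,f_-)$. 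As $\pi$ is a local diffeomorphism at $(e,e)$, the composite $\Phi\circ\pi=\pi\circ\lambda$ is one as well there, and hence $\pi$ is a local $\C$-analytic diffeomorphism at $(f_+,f_-)$; in particular $\pi$ is an open map.

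The remaining, and in my view \emph{hardest}, point is the injectivity of $\pi$, and this is exactly where the hypothesis that $\cO(G,\C)$ separates the points of $G$ is needed. It suffices to prove $G_\cA^+\cap G_\cA^\ominus=\{e\}$ inside $C(\Sph,G)$: given $h$ in this intersection, its holomorphic extensions $h^<\colon\wb{\bD}\to G$ and $h^>\colon\wh{\C}\setminus\bD\to G$ agree on $\Sph$ and glue to a continuous map $H\colon\wh{\C}\to G$ that is holomorphic off $\Sph$ and satisfies $H(\infty)=e$ (the normalization at $\infty$ built into $G_\cA^\ominus$ in \ref{previous}). For every $\phi\in\cO(G,\C)$ the function $\phi\circ H\colon\wh{\C}\to\C$ is continuous and holomorphic off the circle, hence holomorphic across $\Sph$ by the removability of analytic arcs (a standard consequence of Morera's theorem), hence constant since $\wh{\C}$ is compact. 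As holomorphic functions separate the points of $G$, the map $H$ is constant, and $H(\infty)=e$ forces $H\equiv e$, so $h=e$. Consequently, if $f_+f_-=g_+g_-$, then $g_+^{-1}f_+=g_-f_-^{-1}\in G_\cA^+\cap G_\cA^\ominus=\{e\}$, giving $f_\pm=g_\pm$.

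Finally, an injective local $\C$-analytic diffeomorphism is a $\C$-analytic diffeomorphism onto its image, and that image $G_\cA^+G_\cA^\ominus$ is open in $G_\cA$ because $\pi$ is an open map. This establishes both assertions of the proposition.
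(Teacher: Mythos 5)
Your proof is correct and follows essentially the same route as the paper: injectivity via the gluing/Liouville argument using that $\cO(G,\C)$ separates points (you reprove the paper's Lemma~\ref{uni1} inline, in the equivalent form $G_\cA^+\cap G_\cA^\ominus=\{e\}$), local diffeomorphy at $(e,e)$ from the Banach inverse function theorem since $T_{(e,e)}\pi$ is the addition map $\cg_\cA^+\times\cg_\cA^\ominus\to\cg_\cA$, which is a toplinear isomorphism by the decomposing property, and propagation to all points by translation equivariance, exactly as in the paper. No gaps; your explicit verification that $\cg_\cA$ is a Banach space and that the Lie algebra decomposition is topological is a welcome but minor amplification of what the paper leaves implicit.
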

For a complex Banach--Lie group~$G$,
we shall use the complex Fr\'{e}chet--Lie groups
$\cO(\C^\times, G)$, $\cO(\C,G)$,
and $\cO(\wh{\C}\setminus \{0\},G)$
of all holomorphic $G$-valued functions
on $\C^\times$, $\C$, and $\wh{\C}\setminus \{0\}$,
respectively,
as constructed in~\cite{NaW}. We recall from \cite{NaW}:
\begin{numba}
Let $M$ be a non-compact Riemann surface
with finitely generated fundamental
group.
Let $G$ be a complex Banach--Lie group
with neutral element~$e$, Lie algebra $\cg:=T_eG$
and exponential function $\exp_G\colon \cg\to G$.
Then the group $\cO(M,G)$
can made a Fr\'{e}chet--Lie group
with Lie algebra $\cO(M,\cg)$
and exponential function
\[
\cO(M,\cg)\to \cO(M,G),\quad f\mto\exp_G\circ f,
\]
such that the following
\emph{exponential law} holds:
For each complex analatic manifold~$N$
modelled on a complex locally convex space,
a map
\[
f\colon N\to \cO(M,G)
\]
is complex analytic if and only if
the corresponding mapping
\[
f^\wedge\colon N\times M\to G,\quad (n,m)\mto f(n)(m)
\]
is complex analytic.
\end{numba}
\begin{rem}\label{exp-sub}
By construction,
$\cO(\wh{\C}\setminus \{0\},G)_*:=\{f\in \cO(\wh{\C}\setminus\{0\}\colon
f(\infty)=e\}$
is an embedded complex
submanifold of $\cO(\wh{\C}\setminus\{0\},G)$
and a Lie subgroup.
As a consequence, the exponential
law also holds for functions $f\colon N\to \cO(\wh{\C}\setminus
\{0\},G)_*$.
\end{rem}
The following three theorems are the main results of the article.
\begin{thm}\label{main-1}
For each complex Banach--Lie group $G$
on which $\cO(G,\C)$ separates points, the
map
\[
p \colon \cO(\C,G)\times \cO(\hat{\C}\setminus \{0\},G)_*\! \to
\cO(\C^\times,G),\quad\!\!\!
(g_+,g_\ominus)\mto g_+|_{\C^\times}g_\ominus|_{\C^\times}
\]
has open image and is a $\C$-analytic diffeomorphism onto its image.
\end{thm}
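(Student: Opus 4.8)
I would show that $p$ is an injective local $\C$-analytic diffeomorphism; openness of the image and the diffeomorphism property onto it then follow at once. The $\C$-\emph{analyticity} of $p$ is immediate from the exponential law of \cite{NaW}: the associated map $((g_+,g_\ominus),z)\mapsto g_+(z)g_\ominus(z)$ is $\C$-analytic on $\bigl(\cO(\C,G)\times\cO(\wh{\C}\setminus\{0\},G)_*\bigr)\times\C^\times$, being a composite of evaluations and the multiplication of $G$. For \emph{injectivity}, suppose $g_+g_\ominus=h_+h_\ominus$ on $\C^\times$; then $h_+^{-1}g_+=h_\ominus g_\ominus^{-1}$, the left-hand side lying in $\cO(\C,G)$ and the right-hand side in $\cO(\wh{\C}\setminus\{0\},G)$, so together they define a single $F\in\cO(\wh{\C},G)$ on the Riemann sphere. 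For each $\phi\in\cO(G,\C)$ the function $\phi\circ F$ is holomorphic on the compact $\wh{\C}$, hence constant; as $\cO(G,\C)$ separates points, $F$ is constant, and evaluation at $\infty$ gives $F\equiv h_\ominus(\infty)g_\ominus(\infty)^{-1}=e$. Thus $g_+=h_+$ and $g_\ominus=h_\ominus$.

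It therefore suffices to prove that $p$ is a local $\C$-analytic diffeomorphism at the neutral element $(e,e)$ (where $e$ denotes the constant function $z\mapsto e$ in each group). Indeed, for any $(g_+,g_\ominus)$ the left translation $L\colon h_+\mapsto g_+h_+$ on $\cO(\C,G)$, the right translation $R\colon h_\ominus\mapsto h_\ominus g_\ominus$ on $\cO(\wh{\C}\setminus\{0\},G)_*$ (well defined, since $(h_\ominus g_\ominus)(\infty)=e$), and the $\C$-analytic diffeomorphism $T\colon w\mapsto g_+w\hspace{0.3mm}g_\ominus$ of $\cO(\C^\times,G)$ satisfy $p\circ(L\times R)=T\circ p$; since $(L\times R)^{-1}(g_+,g_\ominus)=(e,e)$, the map $p$ is a local diffeomorphism at $(g_+,g_\ominus)$ exactly when it is one at $(e,e)$. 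Granting this, $p$ is an open map with open image, and being injective it is a $\C$-analytic diffeomorphism onto that image.

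For a neighbourhood of $(e,e)$ I would descend to the circle. Restriction to $\Sph$ is a $\C$-analytic homomorphism $\cO(\C^\times,G)\to G_\cW$ carrying elements near $e$ into the domain of the Wiener--Birkhoff decomposition of Proposition~\ref{deco-easy} for $\cA=\cW$ and the Banach--Lie group $G$. Hence each $g\in\cO(\C^\times,G)$ near $e$ admits a factorisation $g|_\Sph=a_+a_\ominus$ with $a_+\in G_\cW^+$, $a_\ominus\in G_\cW^\ominus$ depending $\C$-analytically on $g$, where $a_+$ and $a_\ominus$ are the boundary values of holomorphic maps $u\colon\bD\to G$ and $v\colon\wh{\C}\setminus\wb{\bD}\to G$ with $v(\infty)=e$. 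The crucial \emph{regularity} step exploits that $g$ is holomorphic on \emph{all} of $\C^\times$: then $g\hspace{0.3mm}v^{-1}$ is holomorphic on $\{|z|>1\}$ and has the same $\Sph$-boundary values $a_+$ as $u$, so the function equal to $u$ on $\bD$ and to $g\hspace{0.3mm}v^{-1}$ on $\{|z|>1\}$ is continuous on $\C$ and holomorphic off the real-analytic curve~$\Sph$; by a Banach-space-valued Morera (Painlev\'e) argument in charts of $G$ it is holomorphic on all of $\C$, yielding $\wh u\in\cO(\C,G)$. Symmetrically $u^{-1}g$ glues with $v$ to $\wh v\in\cO(\wh{\C}\setminus\{0\},G)_*$, and one verifies $p(\wh u,\wh v)=\wh u\,\wh v=g$ on $\C^\times$. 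The assignment $g\mapsto(\wh u,\wh v)$ inverts $p$ near $(e,e)$, and its $\C$-analyticity I would establish through the exponential law, reducing it to analyticity of $(g,z)\mapsto\wh u(z)$ and $(g,z)\mapsto\wh v(z)$, which combines the analytic dependence of $(a_+,a_\ominus)$ on $g$ with that of the Cauchy-type extensions and the pointwise group operations.

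\emph{The main obstacle} is exactly this regularity/analyticity step. The Fr\'echet--Lie group $\cO(\C^\times,G)$ admits no inverse function theorem, which is why I route the factorisation through the Banach group $G_\cW$; the work then lies in upgrading the merely-on-$\Sph$ Wiener factors to genuinely $G$-valued holomorphic maps on the full domains $\C$ and $\wh{\C}\setminus\{0\}$, in controlling the gluing across $\Sph$, and in making the whole passage depend $\C$-analytically on $g$. It is precisely the holomorphy of $g$ on the \emph{entire} punctured plane that drives the extension out to all of $\C$ and $\wh{\C}\setminus\{0\}$, and thereby separates this statement from a mere factorisation in $C^\omega(\Sph,G)$.
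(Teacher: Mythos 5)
Your proposal follows the paper's own route in all essentials: restriction to $\Sph$ viewed as a $\C$-analytic homomorphism into the Wiener--Lie group $G_\cW$, the Banach-level factorization supplied by Proposition~\ref{deco-easy} with $\cA=\cW$, the gluing of $u$ with $g\hspace{0.3mm}v^{-1}$ (resp.\ of $u^{-1}g$ with $v$) across $\Sph$ via a Morera argument to produce $\wh{u}\in\cO(\C,G)$ and $\wh{v}\in\cO(\wh{\C}\setminus\{0\},G)_*$, injectivity via Liouville plus the point-separation hypothesis (the paper's Lemma~\ref{uni1}), and analyticity of the inverse via the exponential law. Your translation-equivariance reduction to $(e,e)$ is a harmless repackaging: the paper instead identifies the image globally as the preimage of $G_\cW^+G_\cW^\ominus$ under the restriction homomorphism and proves analyticity of $p^{-1}$ on all of it, but the underlying computations are the same.

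There is, however, one concrete hole in the step you yourself flag as ``the main obstacle'' but do not resolve: joint analyticity of $(g,z)\mto\wh{u}(z)$ at points with $z\in\Sph$. Your two formulas --- the boundary-value extension of $a_+$ on $\bD$, and $g\hspace{0.3mm}v^{-1}$ on $\{|z|>1\}$ --- cover only $\C\setminus\Sph$; for \emph{fixed} $g$ holomorphy across $\Sph$ comes from Morera, but joint analyticity in $(g,z)$ near $\Sph$ follows from neither formula, and neither can be pushed to a larger region, since Wiener boundary values extend to no larger disk or annulus. The paper closes exactly this gap with the $\C^\times$-equivariance $(w.f)(z):=f(wz)$: by uniqueness of the factorization, $\widehat{(w.f)}_+(z)=\wh{f}_+(wz)$, so analyticity of $(f,z)\mto\wh{f}_+(z)$ on $\im(p)\times\bD$ --- which your composition through $G_\cW\to G_\cW^+\to\cO(\bD,G)$ does deliver, using the estimate $\|f^<\|_\infty\leq\|f\|_\cW$ --- propagates by rescaling to $\im(p)\times\C$. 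You need this trick or a substitute (e.g.\ a chart-level Cauchy-integral argument, which is delicate since $G$ has no linear structure). A smaller caveat: restriction lands in $G_\cW$ only on the identity component of $\cO(\C^\times,G)$ --- for $G=\C^\times$ the loop $z\mto z$ has winding number one and its boundary value lies outside $\exp(\cW)$ --- which is harmless in your local-at-identity framing but should be stated; the paper routes $\rho$ through $C^\infty(\Sph,G)_0\sub G_\cW$ for precisely this reason.
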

\begin{numba}
Given a complex Banach--Lie group
$G$ on $\cO(G,\C)$ separates points,
we let $C^\infty(\Sph,G)^+$
be the set of all $f\in C^\infty(\Sph,G)$
admitting a continuous extension
$f^<\colon \wb{\bD}\to G$
which is complex analytic on~$\bD$.
We let $C^\infty(\Sph,G)^\ominus$
be the set of all $f\in C^\infty(\Sph,G)$
admitting a continuous extension
$f^>\colon \wh{\C}\setminus \bD\to G$
which is complex analytic on $\wh{\C}\setminus\wb{\bD}$
and satisfies $f(\infty)=e$.
Then $C^\infty(\Sph,G)^+$
and $C^\infty(\Sph,G)^\ominus$
are subgroups of $C^\infty(\Sph,G)$.
Subgroups $C^\omega(\Sph,G)^+$
and $C^\omega(\Sph,G)^\ominus$
of $C^\omega(\Sph,G)$
can be defined along the same lines.
\end{numba}
\begin{thm}\label{main-2}
For each complex Banach--Lie group $G$
on which $\cO(G,\C)$ separates points, the
map
\[
\pi\colon C^\omega(\Sph,G)^+\times C^\omega(\Sph,G)^\ominus
\to C^\omega(\Sph,G).
\]
has open image and is a $\C$-analytic diffeomorphism onto its image.
\end{thm}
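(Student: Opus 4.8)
The plan is to regard $\pi$ as a $\C$-analytic map between complex Lie groups modelled on Silva spaces and to establish two things: that $\pi$ is globally injective, and that it is a local $\C$-analytic diffeomorphism. An injective local $\C$-analytic diffeomorphism automatically has open image and a globally defined $\C$-analytic inverse (the local inverses agree on overlaps by injectivity), so these two facts give the theorem. Write $\cA:=C^\omega(\Sph,\C)$, which by item~(e) is a decomposing $R$-cia and, being a Silva space, is complete and hence Mackey complete. By the constructions of~\ref{previous} adapted to Mackey complete $R$-cias (see~\ref{details-BCH}), $C^\omega(\Sph,G)$ is a complex Lie group with Lie algebra $\cg_\cA=C^\omega(\Sph,\cg)$, and the two factors are Lie groups with Lie algebras $\cg_{\cA^+}$ and $\cg_{\cA^\ominus}$ admitting $\C$-analytic inclusions into $C^\omega(\Sph,G)$ (with identity components the groups $G_\cA^+$, resp.\ $G_\cA^\ominus$, of~\ref{previous}). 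Granting this, $\pi$ is $\C$-analytic, being the group multiplication of $C^\omega(\Sph,G)$ precomposed with these inclusions.

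Injectivity is the first substantive step and is where the separation hypothesis enters. Suppose $g_+g_\ominus=h_+h_\ominus$ on $\Sph$. Then $u:=h_+^{-1}g_+$ extends to a continuous map $\wb{\bD}\to G$ that is holomorphic on $\bD$, while $v:=h_\ominus g_\ominus^{-1}$ extends to a continuous map $\wh{\C}\setminus\bD\to G$ holomorphic off $\wb{\bD}$ with $v(\infty)=e$; moreover $u=v$ on $\Sph$. Gluing $u$ and $v$ gives a continuous $F\colon\wh{\C}\to G$, and for every $\varphi\in\cO(G,\C)$ the function $\varphi\circ F$ is continuous on $\wh{\C}$ and holomorphic off $\Sph$, hence holomorphic across $\Sph$ by Morera's theorem and therefore constant by compactness of $\wh{\C}$. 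As $\cO(G,\C)$ separates points, $F$ is constant, and $F(\infty)=v(\infty)=e$ forces $F\equiv e$. Thus $u\equiv e$ and $v\equiv e$, giving $g_+=h_+$ and $g_\ominus=h_\ominus$.

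It remains to show $\pi$ is a local $\C$-analytic diffeomorphism. At the identity its differential is the addition map $\cg_{\cA^+}\times\cg_{\cA^\ominus}\to\cg_\cA$, $(\xi,\eta)\mapsto\xi+\eta$. Because $\cA$ is decomposing we have $\cA=\cA^+\oplus\cA^\ominus$, and tensoring this with the Banach space $\cg$ in the projective tensor topology of~\ref{proj-tensor} yields the topological direct sum $\cg_\cA=\cg_{\cA^+}\oplus\cg_{\cA^\ominus}$, the constants being absorbed into the $+$ summand as in~\ref{char-deco}. Hence $T_{(e,e)}\pi$ is a topological isomorphism. Left translation by $g_+$ and right translation by $g_\ominus$ are $\C$-analytic automorphisms intertwining $\pi$ near $(g_+,g_\ominus)$ with $\pi$ near $(e,e)$, so it suffices to invert $\pi$ locally at the identity.

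The main obstacle is precisely this local inversion. Since $\cg_\cA$ is a Silva space rather than a Banach space, the Banach inverse function theorem underpinning~\ref{deco-easy} does not apply directly; and unlike $C^\infty(\Sph,\C)=\bigcap_m\cW(m)$, a projective limit of the weighted Wiener algebras of~\ref{deco-easy} amenable to Proposition~\ref{permafrost}(a), the algebra $\cA=\dl\,\cO_b(A_n,\C)$ is an inductive limit whose Banach steps $\cO_b(A_n,\C)$ are not themselves decomposing. I would therefore realise $C^\omega(\Sph,G)$, $C^\omega(\Sph,G)^+$ and $C^\omega(\Sph,G)^\ominus$ as direct limits of the Banach--Lie groups of boundary values of holomorphic $G$-valued functions on $A_n$, on the disk of radius $1+\tfrac1n$, and on the exterior of the disk of radius $1-\tfrac1n$ with value $e$ at $\infty$, respectively. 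The delicate feature is that the Laurent splitting gains regularity: the regular and principal parts of $f\in\cO_b(A_n,\C)$ converge only on the smaller annulus $A_{n+1}$, so the decomposition carries level $n$ into the level-$(n{+}1)$ summands, with compact linking maps. The local inverse of $\pi$ must accordingly be assembled from the Banach levels through an inverse function theorem adapted to such regularity-gaining direct limits; this is the technical heart of the proof. Once it is in place, global injectivity and local $\C$-analytic invertibility combine to show that $\pi$ is a $\C$-analytic diffeomorphism onto its open image.
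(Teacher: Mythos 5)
Your overall architecture — injectivity via the gluing/Morera/separation argument of Lemma~\ref{uni1}, translation equivariance reducing everything to a local inversion at the identity, and the observation that the Banach inverse function theorem is unavailable over the Silva space — matches the paper. But the step you explicitly defer, an ``inverse function theorem adapted to regularity-gaining direct limits,'' is precisely the mathematical content of the paper's proof, and you neither state nor prove such a theorem; as it stands this is a genuine gap, not a routine technicality. Moreover, the premise on which you base the need for such a bespoke theorem is avoidable: it is not true that the Birkhoff splitting must drop from level $n$ to level $n+1$. The paper renorms $\cg_n:=\cO_b(A_n,\cg)$ by $\|f\|_n:=\max\{\|f\|_\infty,\|f|_\Sph\|_\cW\}$ (equivalent to the sup-norm, and submultiplicative) and shows, by splitting the boundary value $f|_\Sph=g_++g_\ominus$ in the Wiener--Lie algebra $\cg_\cW$ and gluing ($f_+^<:=g_+^<$ on $\wb{\bD}$ and $f-g_\ominus^>$ on the outer ring, and analogously for $f_\ominus$), that $f=f_++f_\ominus$ with \emph{both} summands again in $\cO_b(A_n,\cg)$ and $\|f_+\|_n\leq 2\|f\|_n$, $\|f_\ominus\|_n\leq 2\|f\|_n$: the projections are level-preserving with bound $2$ uniform in~$n$, so no regularity is lost at all.

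With that in hand the paper closes the argument with three quantitatively uniform ingredients that your sketch lacks. Lemma~\ref{universal-BCH} gives one radius $r>0$, independent of the Banach--Lie algebra with submultiplicative norm, on which the BCH multiplication converges with $\Lip(R_\cg)\leq\frac{1}{4}$; the quantitative inverse function theorem (Lemma~\ref{quanti-inv}) then makes each $h_n(x):=\pr^+_n(x)*\pr^\ominus_n(x)=x+R_n(x)$, with $\Lip(R_n)\leq\frac{1}{2}$, a $\C$-analytic diffeomorphism of $B^{\cg_n}_{r/2}(0)$ onto an open image containing $B^{\cg_n}_{r/4}(0)$; and Dahmen's theorem assembles the maps $h_n$ and $h_n^{-1}$ into holomorphic maps on the unions $\bigcup_{n\in\N}B^{\cg_n}_{r/2}(0)$ and $\bigcup_{n\in\N}B^{\cg_n}_{r/4}(0)$, precisely because each piece is holomorphic \emph{and bounded} on a ball whose radius is uniform in~$n$. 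The local inverse of $\pi$ at the identity is then transported through the exponential maps of $G_\cA^+\times G_\cA^\ominus$ and $C^\omega(\Sph,G)$. If you want to repair your write-up, replace the appeal to an unnamed inverse function theorem for ``regularity-gaining'' limits by this renorming-plus-uniform-estimates scheme (or supply and prove such a theorem, which is not in the paper's toolbox). A secondary inaccuracy: since $\cg$ is a Banach space here, you cannot invoke~\ref{proj-tensor} (stated for finite-dimensional factors) to obtain $\cg_\cA=\cg_{\cA^+}\oplus\cg_{\cA^\ominus}$ with the projective tensor topology; the paper instead works directly with $C^\omega(\Sph,\cg)=\dl\,\cO_b(A_n,\cg)$ and the level-wise splitting above.
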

In the following theorem,
Lie subgroups are understood
as embedded complex submanifolds.
\begin{thm}\label{main-3}
Let $G$ be a finite-dimensional complex Lie group
which is
a Lie subgroup of $\GL_n(\C)$
for some $n\in\N$. Then the map
\[
\pi\colon C^\infty(\Sph,G)^+\times C^\infty(\Sph,G)^\ominus
\to C^\infty(\Sph,G).
\]
has open image and is a $\C$-analytic diffeomorphism onto its image.
\end{thm}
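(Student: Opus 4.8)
The plan is to exploit the description of $\cA:=C^\infty(\Sph,\C)$ as the projective limit $\pl\cW(m)$ of the weighted Wiener algebras (item (d) above), so as to reduce the Fr\'echet problem to the Banach problem already solved at each weight. As a first step I would record that, since $G$ is a Lie subgroup of $\GL_n(\C)$, the matrix entries are holomorphic functions separating points on $G$; hence $\cO(G,\C)$ separates points and Proposition~\ref{deco-easy} applies with $\cA=\cW(m)$ (viewing $G$ as a Banach--Lie group) for every $m\in[0,\infty[$. Writing $\Omega_m:=G_{\cW(m)}^+G_{\cW(m)}^\ominus$, this gives for each $m$ that $\Omega_m$ is open in $G_{\cW(m)}$ and that the product map $\pi_m\colon G_{\cW(m)}^+\times G_{\cW(m)}^\ominus\to\Omega_m$ is a $\C$-analytic diffeomorphism.

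Next I would identify the $C^\infty$-objects as projective limits of the $\cW(m)$-objects. Since $\cg$ is finite dimensional, $\cg_\cA=\cg\otimes_\C\cA=\pl(\cg\otimes_\C\cW(m))$, so $C^\infty(\Sph,G)=\pl G_{\cW(m)}$, while the holomorphic-extension description of $C^\infty(\Sph,G)^\pm$ combined with the Fourier characterisation of~\ref{char-deco} yields $C^\infty(\Sph,G)^+=\pl G_{\cW(m)}^+$ and $C^\infty(\Sph,G)^\ominus=\pl G_{\cW(m)}^\ominus$ as topological groups. Under these identifications the map $\pi$ of the theorem is the projective limit $\pl\pi_m$, hence $\C$-analytic. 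Injectivity I would obtain directly by a Liouville argument: from $f_+f_\ominus=h_+h_\ominus$ the element $h_+^{-1}f_+=h_\ominus f_\ominus^{-1}$ extends to a holomorphic $\GL_n(\C)$-valued function on all of $\wh{\C}$, hence has constant matrix entries, and evaluation at $\infty$ forces the constant to be $\mathbf 1$ because the $\ominus$-factors are normalised to $\mathbf 1$ there.

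The delicate point is that the image of $\pi$ is open in the Fr\'echet group $C^\infty(\Sph,G)$. Writing $\iota_m\colon C^\infty(\Sph,G)\to G_{\cW(m)}$ for the continuous inclusion, uniqueness of the factorisation shows that the image of $\pi$ equals $\bigcap_m\iota_m^{-1}(\Omega_m)$, which a priori is only a $G_\delta$-set. To upgrade this to an open set I would prove the key regularity (bootstrap) statement: if $f\in C^\infty(\Sph,G)$ admits a factorisation whose factors lie merely in the level $\cW(m_0)$, then those factors in fact lie in $\cW(m)$ for every $m\ge m_0$. Granting this, $\iota_{m_0}^{-1}(\Omega_{m_0})=\iota_m^{-1}(\Omega_m)$ for all $m\ge m_0$ (the inclusion $\subseteq$ is the regularity statement, the reverse is trivial because factors in the finer algebra are factors in the coarser one), so the $G_\delta$ collapses to the single open set $\iota_{m_0}^{-1}(\Omega_{m_0})$. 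With the image open and $\pi=\pl\pi_m$ a compatible limit of diffeomorphisms, the inverse $\pi^{-1}=\pl\pi_m^{-1}$ is $\C$-analytic, which finishes the proof.

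The main obstacle is precisely this regularity statement, and it is here that the hypothesis $G\le\GL_n(\C)$ enters. I would first establish it for $\GL_n(\C)$ itself, where $C^\infty(\Sph,\GL_n(\C))=\GL_n(\cA)$ and the matrix Riemann--Hilbert machinery underlying Proposition~\ref{birkho-n} constructs the factors through the bounded Riesz-type splitting $\cA=\cA^+\oplus\cA^\ominus$; since these projections preserve each weighted scale $\cW(m)$, tracking the construction shows the factors inherit the regularity of $f$. To descend from $\GL_n(\C)$ to a general subgroup $G$, I would observe that at each weight $m$ Proposition~\ref{deco-easy} already produces \emph{$G$-valued} factors, and that uniqueness of the $\GL_n(\C)$-factorisation forces these to coincide with the $\GL_n(\C)$-factors; hence the $\GL_n(\C)$-factors of a $G$-loop are $G$-valued and, by the matrix regularity just described, smooth. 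Consequently the image of $\pi$ for $G$ is the intersection of the embedded submanifold $C^\infty(\Sph,G)$ with the open big cell of $\GL_n(\C)$, which is open, and the diffeomorphism property descends to $G$.
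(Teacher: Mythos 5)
Your overall architecture coincides with the paper's (reduction to the weighted Wiener scale $\cW(m)$, Proposition~\ref{deco-easy} at each Banach level, a uniqueness-based regularity bootstrap, and the projective limit $C^\infty(\Sph,\C)=\pl\,\cW(m)$ for analyticity of $\pi^{-1}$), but two steps as written have genuine gaps. First, the openness argument: your claimed equality $\im(\pi)=\bigcap_m\iota_m^{-1}(\Omega_m)$ needs the inclusion $C^\infty(\Sph,G)^+\sub G^+_{\cW(m)}$, which is not available -- $G^+_{\cW(m)}$ is by definition the subgroup \emph{generated by} $\exp_G\circ f$ with $f\in\cg_{\cW(m)}^+$, and a smooth $G$-valued loop with a holomorphic extension need not lie in it (this fails outright for disconnected $G$, since all constant loops belong to $C^\infty(\Sph,G)^+$). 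Likewise your final claim, that $\im(\pi)$ equals $C^\infty(\Sph,G)$ intersected with the big cell of $\GL_n(\C)$, would require that the normalized $\GL_n(\C)$-factors of an \emph{arbitrary} $G$-valued loop in the big cell be $G$-valued; Proposition~\ref{deco-easy} produces $G$-valued factors only for loops already lying in the open sets $\Omega_m$, so your uniqueness comparison covers only those and does not deliver the stated equality. The paper avoids both problems: it proves only that $G_\cA^+G_\cA^\ominus$ (with $\cA=C^\infty(\Sph,\C)$) is an identity neighbourhood in $C^\infty(\Sph,G)$ -- for $g$ close to $e$ in $G_\cW$ one writes the regularized factor as $g_+=\exp_G\circ h$ with $h\in C^\infty(\Sph,\cg)\cap\cg_\cW^+$, so that $g_+\in G_\cA^+\sub C^\infty(\Sph,G)^+$ -- and then obtains openness of the image for free from translation invariance, since $\im(\pi)$ is the union of the open translates $f_+\bigl(G_\cA^+G_\cA^\ominus\bigr)f_\ominus$. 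Your equality claims should be replaced by this orbit argument.

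Second, the regularity bootstrap for $\GL_n(\C)$: asserting that ``tracking the construction'' of Proposition~\ref{birkho-n} through the bounded splitting shows the factors inherit the $\cW(m)$-regularity of $f$ is a hand-wave -- the Gohberg--Fel'dman construction proceeds via rational approximation and is not merely an application of the projections onto $\cA^\pm$. The paper's mechanism is instead the uniqueness Lemma~\ref{uni2} (packaged for this purpose as Lemma~\ref{go-down}): apply Proposition~\ref{birkho-n} \emph{directly} in $\cA=C^\infty(\Sph,\C)$ -- which is legitimate because Proposition~\ref{birkho-lim}, run over the scale $\cW(m)$, verifies the hypothesis that $\GL_n(\cA^+)\GL_n(\cA^-)$ is an identity neighbourhood -- and compare the resulting smooth factorization $f=f_+Df_-$ with the Wiener-level one $f=g_+g_\ominus$. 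Lemma~\ref{uni2} then forces $\kappa_1=\cdots=\kappa_n=0$ and $f_+=g_+C$, $f_-=C^{-1}g_-$ with a constant matrix $C\in\GL_n(\C)$, so the Wiener factors are in fact smooth. Since you already invoke exactly this uniqueness device for the descent from $\GL_n(\C)$ to $G$, the repair is to use it at the matrix stage as well, rather than appealing to an untracked construction.
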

Let us compare our approach with \cite{PaS}.
\begin{rem}
I our approach, we recover all of the known facts from
\ref{Pressley} and Remark~\ref{complexification},
except for the density property.
The following aspects are new in our approach:
\begin{itemize}
\item[(a)]
The product map (in the Birkhoff decompositions)
is a $\C$-analytic diffeomorphism
for $\cO(\C^\times, G)$ and $C^\omega(\Sph,G)$;
\item[(b)]
The theory applies to more general complex Lie groups
$G$ (including some Banach--Lie groups), not only to
complexifications $K_\C$;
\item[(c)]
The approach covers further classes of loop groups
based on other coefficient algebras.
\end{itemize}
\end{rem}
We mention that our work has points of contact with
\cite{GaW}, but differs in details.
A broader discussion will be given in a later
version of this manuscript.

\section{Preliminaries and basic facts}\label{secprels}
We write $\N:=\{1,2,\ldots\}$
and $\N_0:=\N\cup\{0\}$.
All topological vector spaces are assumed Hausdorff.
If $(E,\|\cdot\|)$
is a normed space, we write $B^E_r(x):=\{y\in E\colon \|y-x\|<r\}$
and $\wb{B}^E_r(x):=\{y\in E\colon \|y-x\|\leq r\}$
for $x\in E$ and $r>0$.\\[2.3mm]
See \cite{Res, GaN} for the following concept
(as well as \cite{BaS,Mil} if $F$
is sequentially complete).
\begin{defn}
Let $E$ and $F$ be locally convex
complex topological vector spaces
and $U\sub E$ be an open subset.
A map $f\colon U\to F$
is called \emph{complex analytic}
(or $\C$-analytic)
if it is continuous
and, for each $x\in U$,
\[
f(y)=\sum_{k=0}^\infty p_k(y-x)
\]
for all $y$ in some $x$-neighbourhood in~$U$,
where $p_k\colon E\to F$
is a continuous complex homogeneous polynomial
of degree~$k$.
\end{defn}
If $F$ is Mackey complete, then a continuous function $f$ as before
is complex analytic if and only if $f$ is \emph{holomorphic}
in the sense that the complex directional derivative
\[
df(x,y):=\lim_{z\to 0}\frac{f(x+zy)-f(x)}{z}
\]
(with $z\in\C\setminus\{0\}$ in some $0$-neighbourhood)
exists for all $x\in U$ and $y\in E$, and $df\colon U\times E\to F$
is continuous (see, e.g., \cite{GaN}).
We shall then use the words ``complex analytic''
and ``holomorphic''
interchangeably.\\[2.3mm]
Following \cite{Res,GaN}
(see already \cite{Mil}
for sequentially complete spaces),
we define:
\begin{defn}
Let $E$ and $F$ be locally convex
real topological vector spaces
and $U\sub E$ be an open subset.
A map $f\colon U\to F$
is called \emph{real analytic}
if it admits a complex-analytic
extension $V\to F_\C$
with values in the a complexification $F_\C$ of~$F$,
defined on an open subset $V\sub E_\C$ with $U\sub V$.
\end{defn}
Complex-analytic maps are real analytic (see \cite{GaN}).
Compositions of composable $\K$-analytic maps
are $\K$-analytic for $\K\in\{\R,\C\}$ (see \cite{Res,GaN}).
Therefore complex manifolds
and complex Lie groups modelled
on complex locally convex spaces can be defined
as expected.
\begin{numba}
A complex-analytic manifold (or simply: complex manifold)
modelled on a complex locally convex space~$E$
is a Hausdorff topological space~$M$,
together with a maximal
set $\cA$ of homeomorphisms
$\phi\colon U_\phi\to V_\phi$ (``charts'')
from an open subset $U_\phi\sub M$
onto an open subset $V_\phi\sub E$
such that $\bigcup_{\phi\in\cA}U_\phi=M$
and the chart changes $\phi\circ \psi^{-1}$
are complex analytic for all $\phi,\psi\in \cA$.
\end{numba}
Complex-analytic maps between complex manifolds
are defined as usual, as continuous
maps which are complex analytic in charts
(see \cite{GaN} for details).
\begin{numba}
A \emph{complex Lie group}
modelled on a complex locally convex space~$E$
is a group~$G$, endowed with
a complex manifold structure modelled on~$E$
which turns the group operations into complex analytic
maps (see \cite{GaN} for more details).
\end{numba}
When we use the word ``complex manifold''
or ``complex Lie group''
in this article,
we always mean a manifold or Lie group
modelled on a complex locally convex space,
which may be infinite-dimensional.
We shall say explicitly when
a manifold or Lie group
is assumed of finite dimension.
\begin{numba}\label{proj-tensor}
If $E$ is locally convex space and $F$ a finite-dimensional vector space
with basis $b_1,\ldots, b_\ell$,
give $F\otimes_\C E$ the locally convex vector topology making\vspace{-2mm}
\[
E^\ell\to F\otimes_\C E,\quad (v_1,\ldots, v_\ell)\mto \sum_{j=1}^\ell
b_j\otimes v_j\vspace{-2mm}
\]
an isomorphism of topological vector spaces. It is independent
of the choice of basis.
Thus: If $F=F_1\oplus F_2$, then\vspace{-1,5mm}
\[
F\otimes_\C E=(F_1\otimes_\C E)\oplus (F_2\otimes_\C E)\vspace{-1.5mm}
\]
as a topological vector space. Notably, $F_1\otimes_\C E$ is closed
in $F\otimes_\C E$.
\end{numba}
The following properties can be shown as in \cite{GaF}.
\begin{la}\label{basicR}
For every $R$-cia $\cA\sub C(\Sph,\C)$,
the following holds:
\begin{itemize}
\item[\rm(a)]
$\Sph\to\C$, $z\mto z$ has spectrum $\Sph$;
\item[\rm(b)]
$\C[z,z^{-1}]$ is dense in $\cA$;
\item[\rm(c)]
The set $\cM$ of algebra homomorphisms
$\cA\to \C$ is homeomorphic to $\Sph$;
\item[\rm(d)]
Identifying $\cM$ with $\Sph$,
the Gelfand transform
is the inclusion map $\iota\colon \cA\to C(\Sph,\C)$. $\,\square$
\end{itemize}
\end{la}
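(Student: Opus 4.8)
The plan is to prove (a) by a direct computation of the spectrum, to deduce automatic continuity of characters from the openness of $\cA^\times$, and then to read off (c) and (d) from the general Gelfand theory of commutative continuous inverse algebras; the density assertion (b) carries the only real difficulty.

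First I would settle (a). For $\lambda\in\C\setminus\Sph$ the restriction of $z\mapsto 1/(z-\lambda)$ is a rational function with poles off $\Sph$ and hence an element of $\cA$, which is a two-sided inverse of $\id_\Sph-\lambda\mathbf 1$; for $\lambda\in\Sph$ the function $z\mapsto z-\lambda$ has a zero on $\Sph$ and is therefore non-invertible already in $C(\Sph,\C)$, so it cannot be invertible in the unital subalgebra $\cA$. Thus the spectrum of $\id_\Sph$ equals $\Sph$; in particular $0\notin\Sph$ shows $\id_\Sph\in\cA^\times$.

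The heart of the matter is (b). By hypothesis the restrictions of rational functions with poles off $\Sph$ are dense in $\cA$, so it is enough to approximate each such function $r$ by Laurent polynomials in the topology of $\cA$. Passing to a partial-fraction decomposition reduces this to the summands $(z-a)^{-m}$ with $|a|\ne 1$, and for $|a|>1$ (resp.\ $|a|<1$) I would expand in a power series in $z$ (resp.\ $z^{-1}$) whose radius of convergence exceeds $1$. The subtle point --- trivial in the Banach setting of \cite{GaF} but not in general --- is that these partial sums must converge in the locally convex topology of $\cA$, not merely uniformly on $\Sph$. I expect this to be the main obstacle. I see two ways around it: either invoke the continuous holomorphic functional calculus of continuous inverse algebras (\cite{Wa1,CIA}), which gives a continuous homomorphism $\cO(\Omega)\to\cA$, $h\mapsto h(\id_\Sph)$, for an annulus $\Omega\supseteq\Sph$, under which the Laurent polynomials --- dense in $\cO(\Omega)$ --- map to a dense subset of the image that contains every such $r$; or, using that all algebras considered here are locally $m$-convex (by \cite{AaN,DaW}), observe that $a^{-1}\id_\Sph$ has spectral radius $|a|^{-1}<1$, so $(a^{-1}\id_\Sph)^N\to 0$ and the geometric series converges in $\cA$ after multiplication by the fixed element $(\id_\Sph-a\mathbf 1)^{-1}$.

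With (a) and (b) in hand I would finish as follows. Since $\cA^\times$ is open, the complement $\cA\setminus\cA^\times$ is closed and contains every proper ideal, so the closure of a proper ideal is proper and every maximal ideal is closed; hence each $\chi\in\cM$ is continuous and, as $\chi$ cannot vanish on a unit, $\chi(\id_\Sph)\in\sigma(\id_\Sph)=\Sph$. This yields the map $\Phi\colon\cM\to\Sph$, $\chi\mapsto\chi(\id_\Sph)$. It is injective because a continuous character is determined by its values on the dense subalgebra $\C[z,z^{-1}]$, where $\chi(p)=p(\chi(\id_\Sph),\chi(\id_\Sph)^{-1})$; it is surjective because $\Phi(\ev_w)=w$ for the evaluation $\ev_w$; and it is a homeomorphism since $\Phi$ is continuous for the Gelfand topology while its inverse $w\mapsto\ev_w$ is continuous because each $a\in\cA\subseteq C(\Sph,\C)$ is continuous on $\Sph$. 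This proves (c). Finally, under this identification the Gelfand transform satisfies $\widehat a(w)=\ev_w(a)=a(w)$, so it coincides with the inclusion $\iota\colon\cA\to C(\Sph,\C)$, giving (d); the continuity of the Gelfand transform then yields continuity of $\iota$, and the characterization of units by non-vanishing of the Gelfand transform yields the isospectrality $\cA^\times=\cA\cap C(\Sph,\C)^\times$.
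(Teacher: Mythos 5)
Your overall route is the one the paper intends: the paper gives no written-out proof (the lemma is stated with the proof left to an adaptation of Gohberg--Fel'dman, and the author's own notes record precisely your two ideas for (a) and (b): non-invertibility of $z-\lambda$ for $\lambda\in\Sph$ versus $\frac{1}{z-\lambda}\in\cA$ for $\lambda\notin\Sph$, and Neumann-series expansion of $\frac{1}{z-\lambda}$ in powers of $z$ or $z^{-1}$). Your treatment of (a), (c) and (d) is sound: openness of $\cA^\times$ forces maximal ideals to be closed and characters to be continuous, $\chi\mto\chi(\id_\Sph)$ together with the evaluations $\ev_w$ gives the homeomorphism $\cM\cong\Sph$, and the Gelfand transform becomes the inclusion. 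You are also right that (b) is the only real difficulty beyond the Banach case of \cite{GaF}. But exactly there, neither of your two proposed mechanisms covers the stated generality of the lemma.

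Concretely: your second route rests on the claim that ``all algebras considered here are locally $m$-convex (by \cite{AaN,DaW})'', and this is a misreading of those references. They concern locally convex \emph{inductive limits} of Banach (respectively, locally $m$-convex) algebras, and the paper invokes them only in that situation (Proposition~\ref{permafrost}\,(b) and the example $C^\omega(\Sph,\C)$); a general $R$-cia carries no local $m$-convexity, so the inference $\rho(a^{-1}\id_\Sph)<1\Rightarrow(a^{-1}\id_\Sph)^N\to 0$ in the topology of $\cA$ is unjustified. Your first route, the holomorphic functional calculus of \cite{Wa1,CIA}, is constructed via Cauchy integrals of the resolvent and is available only for \emph{Mackey complete} cias, whereas Lemma~\ref{basicR} assumes no completeness. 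So as written you prove (b) only for Mackey complete or locally $m$-convex $R$-cias. The gap is repairable without any extra hypothesis, because you do not need the Neumann series to converge in the topology of $\cA$: you only need $(z-a)^{-1}$ to lie in the \emph{closure} of $\C[z,z^{-1}]$, and for a vector subspace the closure and the weak closure coincide. Weak convergence of the partial sums is free: with $u:=a^{-1}\id_\Sph$, the resolvent identity $(\mathbf{1}-su)^{-1}-(\mathbf{1}-tu)^{-1}=(s-t)(\mathbf{1}-su)^{-1}u(\mathbf{1}-tu)^{-1}$ and continuity of inversion and multiplication show that $t\mto(\mathbf{1}-tu)^{-1}$ is holomorphic on $\{t\in\C\colon |t|<|a|\}$; hence for each $\lambda\in\cA'$ the scalar function $t\mto\lambda((\mathbf{1}-tu)^{-1})$ has Taylor radius at least $|a|>1$, so $\sum_{k=0}^N\lambda(u^k)\to\lambda((\mathbf{1}-u)^{-1})$, i.e.\ the partial sums converge weakly at $t=1$. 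Higher-order poles $(z-a)^{-m}$ then come for free, since the closure of $\C[z,z^{-1}]$ is a subalgebra (multiplication being continuous). With this substitution your proof of (b), and hence the whole proposal, goes through for arbitrary $R$-cias, in the spirit of the paper's sketch.
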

See \cite{Nee,GaN} for the following concept
(cf.\ also \cite{Rob,JFA}).
\begin{defn}
A complex Lie group $G$ modelled on a complex locally convex space
is called a \emph{BCH-Lie group}
if it has an exponential function which is a local diffeomorphism of
complex analytic manifolds at~$0$.
\end{defn}
Then $\exp_G^{-1}(\exp_G(x)\exp_G(y))=x+y+\frac{1}{2}[x,y]+\cdots$
is given by the BCH-series for small $x,y\in\cg$ (see \cite{GaN}).\\[2mm]
The following fact from \cite{CIA}
is one source of BCH-Lie groups.
\begin{numba}
For every complex continuous inverse algebra
$\cA$ which is Mackey complete,
the unit group $\cA^\times$ is a BCH-Lie group.
\end{numba}
For example, $\cA^{n\times n}$ is a cia for each
cia $\cA$, and thus $\GL_n(\cA)=(\cA^{n\times n})^\times$
is a BCH-Lie group if $\cA$ is Mackey complete.
\begin{defn}
A locally convex, complex topological Lie algebra $\cg$
is called BCH if the BCH-series converges
to a $\C$-analytic function $U\times U\to\cg$
for some open $0$-neighbourhood $U\sub \cg$ (cf.\ \cite{Rob}).
\end{defn}
See \cite{GaN} for the following fact (cf.\ also \cite{Rob}).
\begin{la}(Integration of Lie subalgebras)
Let $\cg$ be a BCH-Lie algebra.
If there exists an injective, continuous
Lie algebra homomorphism $\alpha\colon \cg\to L(H)$
to the Lie algebra $L(H)$ of some complex
BCH-Lie group~$H$,
then $G:=\langle\exp_H(\alpha(\cg))\rangle\leq H$
can be made a BCH-Lie group with $L(G)\cong \cg$.
\end{la}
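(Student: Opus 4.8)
The plan is to transport the local BCH structure of $\cg$ onto a neighbourhood of the identity in $G$ by means of the map $\phi:=\exp_H\circ\alpha$, and then to invoke the standard principle that a group equipped with a compatible analytic local chart carries a Lie group structure. Since $H$ is a BCH-Lie group, $\exp_H$ is a local $\C$-analytic diffeomorphism at $0$ and the group law is given there by the BCH series, $\exp_H(v)\exp_H(w)=\exp_H(v*w)$ for $v,w$ in some $0$-neighbourhood of $L(H)$. Because $\cg$ is BCH, the series $x*y$ converges to a $\C$-analytic function on an open $0$-neighbourhood $U\sub\cg$; after shrinking $U$ I may assume $\alpha(U)$ lies in the convergence region of the BCH series of $L(H)$. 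As $\alpha$ is a continuous Lie algebra homomorphism and each summand of the BCH series is a fixed iterated bracket, $\alpha$ intertwines the two products, $\alpha(x*y)=\alpha(x)*\alpha(y)$ for $x,y\in U$, passing $\alpha$ through the convergent partial sums by continuity. Consequently $\phi(x*y)=\phi(x)\phi(y)$ whenever $x*y\in U$.

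First I would check that $\phi$ yields a genuine local group chart near $e$. Since $\exp_H$ is a local diffeomorphism at $0$ and $\alpha$ is injective with $\alpha(U)$ in a region on which $\exp_H$ is injective (after further shrinking), the map $\phi\colon U\to G\leq H$ is injective. Thus $\phi$ identifies the $\C$-analytic local group $(U,*)$ with the subset $\phi(U)\sub G$, and by construction $\phi(U)$ together with its inverses generates $G$.

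The decisive step is to verify the inner-automorphism (``smoothness of conjugation'') condition required by the integration principle of \cite{GaN,Rob}: for every $g\in G$ there should be an open $0$-neighbourhood $V_g\sub U$ with $g\,\phi(V_g)\,g^{-1}\sub\phi(U)$ such that $\gamma_g:=\phi^{-1}\circ c_g\circ\phi\colon V_g\to U$ is $\C$-analytic, where $c_g(h)=ghg^{-1}$. For a generator $g=\phi(z)=\exp_H(\alpha(z))$ this follows from $\operatorname{Ad}$-equivariance: because $\operatorname{ad}_{\alpha(z)}\alpha(x)=\alpha([z,x])$, the subspace $\alpha(\cg)$ is invariant under $e^{\operatorname{ad}\alpha(z)}=\operatorname{Ad}(\exp_H(\alpha(z)))$, and one computes $\gamma_{\phi(z)}(x)=e^{\operatorname{ad}z}x$, which is $\C$-analytic in $x$ near $0$. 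For a general $g=\phi(z_1)\cdots\phi(z_m)$ I would factor $c_g=c_{\phi(z_1)}\circ\cdots\circ c_{\phi(z_m)}$ and compose, shrinking neighbourhoods at each stage to remain inside the relevant domains; since compositions of $\C$-analytic maps are $\C$-analytic, $\gamma_g$ is $\C$-analytic on a suitable $V_g$.

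With these hypotheses in hand, the integration principle endows $G$ with a unique complex manifold structure for which $\phi$ is a $\C$-analytic diffeomorphism of $U$ onto an open identity neighbourhood and the left translations furnish charts; the group operations are $\C$-analytic because they are expressed through $*$ and the maps $\gamma_g$ in these charts. Since $\exp_G=\phi=\exp_H\circ\alpha$ near $0$ is a local diffeomorphism, $G$ is a BCH-Lie group, and $T_0\phi$ identifies $L(G)\cong\cg$ as topological Lie algebras, the bracket being recovered from the quadratic term of $*$. The main obstacle I anticipate is precisely the conjugation condition: keeping careful track of the nested shrinking neighbourhoods so that every iterated conjugation stays within the convergence region of both the BCH series and the $\operatorname{Ad}$-series, and confirming that the resulting topology on $G$---which is \emph{not} the subspace topology inherited from $H$---is well defined and Hausdorff, the latter following from injectivity of $\phi$ and homogeneity under left translation.
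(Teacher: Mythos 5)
The paper itself gives no proof of this lemma; it is imported from \cite{GaN} (cf.\ \cite{Rob}), so your attempt can only be measured against the standard integration argument of those references---which is exactly what you reconstruct: transport the local BCH group $(U,*)$ of $\cg$ into $H$ via $\phi=\exp_H\circ\alpha$, use that $\alpha$ intertwines the two BCH products (valid termwise since the summands are Lie polynomials, then by continuity in the limit), verify injectivity and the analyticity of the conjugation maps $\gamma_g$, and invoke the local-group extension principle. This is the right route and the skeleton is sound.

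Three points in your write-up need repair, though each is fixable. First, the claim that $\phi(U)$ generates $G$ ``by construction'' is not true as stated: $G$ is generated by $\exp_H(\alpha(x))$ for \emph{all} $x\in\cg$, not only small $x$. You need the observation that $t\mto\exp_H(t\alpha(x))$ is a one-parameter subgroup (this is built into the definition of an exponential function), whence $\exp_H(\alpha(x))=\phi(x/n)^n$ once $x/n\in U$; without this, your factorization of a general $g$ into conjugations by $\phi(z_i)$ with $z_i\in U$ silently excludes the large generators. Second, the formula $\gamma_{\phi(z)}(x)=e^{\operatorname{ad}z}x$ presumes convergence of the operator exponential series in $\cg$, which is not given a priori in a locally convex BCH-Lie algebra; the correct grounding is to define $\gamma_{\phi(z)}(x):=z*x*(-z)$, which converges and is $\C$-analytic for small $z,x$ precisely because $\cg$ is BCH, and whose compatibility with $c_{\phi(z)}$ in $H$ follows from $\alpha(z*x*(-z))=\alpha(z)*\alpha(x)*(-\alpha(z))$ together with the BCH identity in $H$. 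Third, injectivity of $\phi$ plus homogeneity does not by itself yield Hausdorffness of the new (finer-than-subspace) topology; the clean argument is that the inclusion $G\to H$ is continuous for the constructed topology (it is $\C$-analytic on the chart $\phi(U)$ and a homomorphism), and $H$ is Hausdorff, so points of $G$ are separated. With these three emendations your sketch is a correct proof of the lemma along the same lines as the cited sources.
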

\begin{numba}\label{details-BCH}
By Ado's Theorem, every finite-dimensional $\C$-Lie algebra $\cg$
is isomorphic to a Lie subalgebra of $\C^{n\times n}$ for
some $n\in\N$. Now $\cg\sub\C^{n\times n}$.\\[3mm]
For each Mackey-complete $R$-cia $\cA$, the Lie subalgebra
\[
\cg_\cA=\cg\otimes_\C\cA
\]
of $\C^{n\times n}\otimes_\C \cA$
is a closed vector subspace. As $\cA^{n\times n}\cong T_{\bf 1}(\GL_n(\cA))$
is BCH, also the BCH-series
of $\cg_\cA$ converges on an open $0$-neighbourhood
and thus $\cg_\cA$ is BCH.
Let $G$ be a complex Lie group with Lie algebra $T_e(G)\cong \cg$.
The inclusion map
\[
\cg_\cA=\cg\otimes_\C \cA\to\cg\otimes_\C C(\Sph,\C)=C(\Sph,\cg)
\]
is an
injective continuous Lie algebra homomorphism,
where $C(\Sph,\cg)\cong L(C(\Sph,G))$ for the BCH-Lie group
$C(\Sph ,G)$.
Hence:
\[
G_\cA:=\langle \exp_G\circ \hspace*{.3mm}f\colon f\in \cg_\cA\rangle\leq C(\Sph,G)
\]
can be made a BCH-Lie group with Lie algebra $L(G_\cA)\cong \cg_\cA$.\\[2mm]
If $\cA$ is decomposing, we have subgroups
\[
G_\cA^+:=\langle \exp_G\circ \hspace*{.3mm}f\colon f\in \cg\otimes_\C \cA^+ \rangle,\;\;
G_\cA^\ominus:=\langle \exp_G\circ \hspace*{.3mm}f\colon f\in \cg\otimes_\C \cA^\ominus\rangle
\]
of $G_\cA$ which are Lie groups with Lie algebras
$\cg^+:=\cg\otimes_\C \cA^+$ and $\cg^\ominus:=\cg\otimes_\C\cA^\ominus$,
respectively.
\end{numba}
%
%
%
%
%
%
%
%
%
\section{Uniqueness of decompositions}
The following
well-known fact will be used repeatedly:
\begin{numba}\label{rera}
\emph{Let $r<1<R$ and $f\colon U\to\C$
be a continuous function on the open annulus
$U:=\{z\in \C\colon r<|z|<R\}$
such that $f$ is holomorphic on $U\setminus\Sph$.
Then $f$ is holomorphic.}
\end{numba}
For the proof, one verifies the hypotheses
of Morera's Theorem for
$z\mto f(e^z)$
on $\{z\in\C\colon \ln(r)<\Repart(z)<\ln(R)\}$.
\begin{la}\label{uni1}
Let $G$ be a complex Lie group.
Let $f_+,g_+\colon \wb{\bD}\to G$
be continuous functions holomorphic
on $\bD$ and $f_-,g_-\colon \wh{\C}\setminus \bD\to G$
be continuous functions holomorphic on $\wh{\C}\setminus \wb{\bD}$,
such that
\[
f_+(z)f_-(z)=g_+(z)g_-(z)\quad\mbox{for all $\,z\in \Sph\,$ and}\quad
f_-(\infty)=g_-(\infty).
\]
If holomorphic functions $G\to\C$ separate points
on~$G$, then $f_+=g_+$ and $f_-=g_-$.
\end{la}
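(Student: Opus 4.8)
The plan is to combine the four boundary pieces into a single continuous map on the Riemann sphere, reduce to the scalar case by composing with holomorphic functions on~$G$, and then invoke Liouville's theorem together with the point-separation hypothesis.

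First I would set $h_+:=g_+^{-1}f_+\colon\wb{\bD}\to G$ and $h_-:=g_-f_-^{-1}\colon\wh{\C}\setminus\bD\to G$. Since inversion and multiplication in the complex Lie group~$G$ are complex analytic, $h_+$ is continuous on $\wb{\bD}$ and holomorphic on~$\bD$, while $h_-$ is continuous on $\wh{\C}\setminus\bD$ and holomorphic on $\wh{\C}\setminus\wb{\bD}$. The identity $f_+f_-=g_+g_-$ on $\Sph$ rearranges to $g_+^{-1}f_+=g_-f_-^{-1}$, that is $h_+=h_-$ on~$\Sph$. Hence $h_+$ and $h_-$ glue to a well-defined continuous map $h\colon\wh{\C}\to G$ which is holomorphic on $\bD$ and on $\wh{\C}\setminus\wb{\bD}$.

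The key step is to upgrade this to holomorphy across~$\Sph$, which I would carry out at the level of scalars rather than for the $G$-valued map directly. For each $\phi\in\cO(G,\C)$ the composite $\phi\circ h\colon\wh{\C}\to\C$ is continuous and, being a composition of holomorphic maps on the relevant open sets, holomorphic off~$\Sph$. Restricting to any annulus $\{z\in\C\colon r<|z|<R\}$ with $r<1<R$ and applying~\ref{rera} shows $\phi\circ h$ is holomorphic there; since holomorphy is local and these annuli together with $\bD$ and $\wh{\C}\setminus\wb{\bD}$ cover $\wh{\C}$, the function $\phi\circ h$ is holomorphic on all of~$\wh{\C}$. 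Being continuous on the compact sphere it is bounded, so by Liouville's theorem $\phi\circ h$ is constant.

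It remains to descend from the scalars back to~$G$ and to identify the constant. As $\phi\circ h$ is constant for every $\phi\in\cO(G,\C)$ and holomorphic functions separate the points of~$G$, the map~$h$ itself must be constant: if $h(z_1)\neq h(z_2)$, some~$\phi$ would separate these values and $\phi\circ h$ could not be constant. Evaluating at~$\infty$ and using $f_-(\infty)=g_-(\infty)$ gives $h(\infty)=g_-(\infty)f_-(\infty)^{-1}=e$, whence $h\equiv e$. Reading off $h_+\equiv e$ yields $f_+=g_+$ on $\wb{\bD}$ and $h_-\equiv e$ yields $f_-=g_-$ on $\wh{\C}\setminus\bD$, as claimed. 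I expect the only delicate point to be the crossing of~$\Sph$; passing to scalar functions via $\cO(G,\C)$ is exactly what makes~\ref{rera} applicable and, through point separation, simultaneously provides the means to conclude that~$h$ is constant.
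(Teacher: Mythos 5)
Your proposal is correct and follows essentially the same route as the paper's own proof: glue $g_+^{-1}f_+$ and $g_-f_-^{-1}$ into a continuous map $h\colon\wh{\C}\to G$, compose with each $\phi\in\cO(G,\C)$, cross the circle via \ref{rera}, conclude constancy from compactness of $\wh{\C}$, and identify the constant as $e$ using $f_-(\infty)=g_-(\infty)$ together with point separation. Your write-up is in fact slightly more careful in one spot, correctly citing \ref{rera} where the paper's proof contains a self-referential typo (``see Lemma~\ref{uni1}'').
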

\begin{proof}
The function
\[
h\colon \wh{\C}\to G,\quad z\mto\left\{
\begin{array}{cl}
g_+(z)^{-1}f_+(z) & \mbox{if $\, z\in \wb{\bD}$;}\\
g_-(z)f_-(z)^{-1} & \mbox{if $\, z\in\wh{\C}\setminus \bD$}
\end{array}\right.
\]
is continuous. Now $\phi\circ h\colon \wh{\C}\to \C$
is continuous for each holomorphic function $\phi\colon G\to\C$
and holomorphic on $\wh{\C}\setminus \Sph$,
whence $\phi\circ h$ is holomorphic (see Lemma~\ref{uni1})
and hence constant as~$\wb{\C}$ is compact.
Thus $h$ is constant, with value $g_-(\infty)f_-(\infty)^{-1}=e$.
\end{proof}
Passing to transposes,
Theorems~1.1 and 1.2
in \cite[Chapter~VIII]{GaF},
applied to continuous functions,
show the following:
\begin{la}\label{uni2}
Let $A_+,A_-,\wt{A}_+,\wt{A}_-\in C(\Sph,\GL_n(\C))$
and $\kappa_1\geq \kappa_2\geq \cdots\geq \kappa_n$
as well as $\nu_1\geq \nu_2\geq\cdots\geq \nu_n$
be integers.
Let $D(z)$ be the diagonal matrix
$\diag(z^{\kappa_1},\ldots,z^{\kappa_n})$
and $\wt{D}(z):=\diag(z^{\nu_1},\ldots, z^{\nu_n})$
for $z\in\C$.
Assume that $A_+$ and $\wt{A}_+$
admit continuous extensions $\wb{\bD}\to\GL_n(\C)$
$($denoted by the same symbol$)$
which are holomorphic on $\bD$.
Assume that $A_-$ and $\wt{A}_-$
admit continuous extensions $\wh{\C}\setminus \bD\to\GL_n(\C)$
$($denoted by the same symbol$)$
which are holomorphic on $\wh{\C}\setminus \wb{\bD}$.
If
\[
A_+(z)D(z)A_-(z)=\wt{A}_+\wt{D}(z)\wt{A}_-\quad\mbox{for all $\,z\in\Sph$,}
\]
then $\kappa_j=\nu_j$ for all $j\in\{1,\ldots,n\}$.
Moreover,
there exists a continuous function
$C\colon \C\to \GL_n(\C)$, $C(z)=(C_{jk})_{j,k=1}^n$
for $z\in\C$ such that
\begin{eqnarray}
\wt{A}_+(z) & = & A_+(z)C(z)\qquad\qquad\quad\hspace*{8.45mm}
\mbox{for all $z\in \wb{\bD}$;}\label{ba1}\\
\wt{A}_-(z) &=& D(z)^{-1}C(z)^{-1}D(z)A_-(z)\quad\mbox{for all
$z\in \C\setminus \bD$}\label{ba2}
\end{eqnarray}
and the following conditions are satisfied
for all $j,k\in\{1,\ldots,n\}$:
\begin{itemize}
\item[\rm(a)]
$c_{kj}=0$ if $\kappa_k<\kappa_j$;
\item[\rm(b)]
$c_{kj}$ is constant if $\kappa_k=\kappa_j$;
\item[\rm(c)]
$c_{kj}$ is a polynomial
of degree $\leq \kappa_k-\kappa_j$, if $\kappa_k>\kappa_j$.
\end{itemize}
\end{la}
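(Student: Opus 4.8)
The plan is to reduce everything to the single matrix-valued function $C:=A_+^{-1}\tilde{A}_+$ together with its exterior counterpart. First I would rewrite the factorization identity by moving the $+$-factors to the left: from $A_+DA_-=\tilde{A}_+\tilde{D}\tilde{A}_-$ on $\Sph$ one obtains $A_+^{-1}\tilde{A}_+=D\,B\,\tilde{D}^{-1}$ there, where $B:=A_-\tilde{A}_-^{-1}$. Since the hypotheses supply extensions with values in $\GL_n(\C)$, the function $C=A_+^{-1}\tilde{A}_+$ is continuous on $\wb{\bD}$, holomorphic on $\bD$, and $\GL_n(\C)$-valued, while $B$ is continuous on $\wh{\C}\setminus\bD$, holomorphic on $\wh{\C}\setminus\wb{\bD}$ (and at $\infty$), and $\GL_n(\C)$-valued. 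Because $D,\tilde{D}$ are diagonal, the entrywise identity $c_{kl}=z^{\kappa_k-\nu_l}B_{kl}$ on $\Sph$ is the engine of the whole argument.

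Next I would glue. For fixed $k,l$ the inner datum $(A_+^{-1}\tilde{A}_+)_{kl}$ on $\wb{\bD}$ and the outer datum $z^{\kappa_k-\nu_l}B_{kl}$ on $\wh{\C}\setminus\bD$ agree on $\Sph$, are continuous on their closed domains, and holomorphic in the respective interiors; by the gluing fact in \ref{rera}, $c_{kl}$ extends to an entire function on $\C$. Inserting the Laurent expansion of $B_{kl}$ at $\infty$ shows $c_{kl}(z)=O(|z|^{\kappa_k-\nu_l})$ as $z\to\infty$, so a Cauchy-estimate (Liouville) argument forces $c_{kl}=0$ when $\kappa_k<\nu_l$, $c_{kl}$ constant when $\kappa_k=\nu_l$, and $\deg c_{kl}\le\kappa_k-\nu_l$ when $\kappa_k>\nu_l$. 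In particular $C$ is a polynomial matrix; and since $\det C\ne0$ on $\wb{\bD}$ (inner form) while on the exterior $\det C(z)=z^{\sum\kappa_k-\sum\nu_l}\det B(z)$ with $\det B(\infty)\ne0$, the polynomial $\det C$ has no zeros and is a nonzero constant, matching of orders at $\infty$ giving $\sum_k\kappa_k=\sum_l\nu_l$. By Cramer's rule $C^{-1}$ is again a polynomial matrix, and the same analysis applied to $C^{-1}=\tilde{A}_+^{-1}A_+$ (with $\kappa$ and $\nu$ interchanged) yields the symmetric bounds, in particular $(C^{-1})_{kl}=0$ when $\nu_k<\kappa_l$.

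The crux is to upgrade equality of the sums to equality of the individual partial indices $\kappa_j=\nu_j$, and I expect this to be the main obstacle, since the determinant controls only the total degree and one must genuinely exploit invertibility together with the ordering to separate the indices. For a threshold $t\in\Z$ set $r(t)=\#\{k:\kappa_k\ge t\}$ and $s(t)=\#\{l:\nu_l\ge t\}$; as the tuples are non-increasing, $\kappa=\nu$ is equivalent to $r\equiv s$. If $s(t)>r(t)$ for some $t$, then for every row $k$ with $\kappa_k<t$ and every column $l$ with $\nu_l\ge t$ one has $\kappa_k<\nu_l$, so $c_{kl}=0$; thus the $s(t)$ columns of $C$ indexed by $\{l:\nu_l\ge t\}$ are supported on the $r(t)$ coordinates $\{k:\kappa_k\ge t\}$ and, being more numerous than $r(t)$, are linearly dependent over $\C(z)$, forcing $\det C=0$ — a contradiction. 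Hence $s(t)\le r(t)$ for all $t$, and the identical argument applied to $C^{-1}$ gives $r(t)\le s(t)$; therefore $r\equiv s$ and $\kappa_j=\nu_j$ for every $j$.

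Finally, with $\nu=\kappa$ the degree conditions of the second paragraph are exactly (a)–(c) for the entries $c_{kj}$, and it remains to read off the two displayed identities. Relation (\ref{ba1}) is the definition $\tilde{A}_+=A_+C$, valid throughout $\wb{\bD}$ by the identity theorem since it holds on $\Sph$; relation (\ref{ba2}) follows from $B=D^{-1}C\tilde{D}=D^{-1}CD$, that is $A_-\tilde{A}_-^{-1}=D^{-1}CD$, rearranged to $\tilde{A}_-=D^{-1}C^{-1}DA_-$ and extended off $\Sph$ by holomorphy. This $C$, continuous (indeed polynomial) and $\GL_n(\C)$-valued on all of $\C$, is the required matrix function.
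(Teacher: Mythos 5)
Your proof is correct, and it is worth noting that it does something the paper itself does not: the paper gives no direct argument for this lemma, but instead obtains it by citing Theorems~1.1 and~1.2 of Gohberg--Fel'dman (\cite{GaF}, Chapter~VIII), passing to transposes and applying them to continuous functions. What you have reconstructed is, in essence, the classical argument underlying those cited theorems: setting $C=A_+^{-1}\wt{A}_+$ and $B=A_-\wt{A}_-^{-1}$, the boundary relation $C=DB\wt{D}^{-1}$ on $\Sph$ lets you glue across the circle via the Morera-type fact \ref{rera} to get entire entries, Cauchy--Liouville estimates using the boundedness of $B$ at $\infty$ give the degree bounds, the zero-free polynomial $\det C$ is a nonzero constant (forcing $\sum_j\kappa_j=\sum_j\nu_j$ and making $C^{-1}$ polynomial by Cramer), and the zero-pattern counting argument applied to both $C$ and $C^{-1}$ separates the individual partial indices --- this last step, which you correctly identify as the crux, is exactly where the ordering $\kappa_1\geq\cdots\geq\kappa_n$ and invertibility enter, and your threshold-counting argument is sound (one may even argue pointwise: for fixed $z_0$ the relevant $s(t)$ columns of $C(z_0)$ lie in an $r(t)$-dimensional coordinate subspace of $\C^n$, so $\det C(z_0)=0$ for every $z_0$, avoiding the field $\C(z)$). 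Two small simplifications: relation (\ref{ba1}) holds on all of $\wb{\bD}$ by the very definition of $C$ there, with no appeal to the identity theorem; and relation (\ref{ba2}) on $\C\setminus\bD$ is likewise immediate from the gluing construction, since the entire extension of $C$ was \emph{defined} on the exterior as $D B\wt{D}^{-1}$ entrywise, so $B=D^{-1}CD$ there and $\wt{A}_-=B^{-1}A_-=D^{-1}C^{-1}DA_-$ follows by algebra (with $D(z)$ invertible as $z\neq 0$), rather than by a boundary identity-theorem argument. The trade-off between the two routes is clear: the paper's citation is short but leaves the reader dependent on \cite{GaF}, while your argument makes the uniqueness statement self-contained at the cost of about a page, and also makes visible that only boundedness of $B$ near $\infty$ (not a full Laurent expansion) is needed.
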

Note that if the matrix elements of
$C$ satisfy the conditions~(a)--(c)
for all $j,k\in\{1,\ldots,n\}$,
then also the matrix elements of
$z\mto C(z)^{-1}$.
\section{Birkhoff decompositions in {\boldmath$\GL_n(\cA)$}}
Let $\cA^-:=\C 1 +\cA^\ominus$.\\[2.3mm]
\begin{proof} (of Proposition~\ref{birkho-1}).
Holomorphic functional calculus
provides an exponential function $\exp$
and a logarithm function for~$\cA$,
and one finds that~$\exp$ is a local $C^\infty_\C$-diffeomorphism
at~$0$ (see \cite{CIA}).
The Lie group $\cA^\times$ being abelian,
$\exp\colon \cA\to\cA^\times$
is a group homomorphism.
Hence
$\exp(\cA)$ is an open
subgroup of $\cA^\times$. If $g\in \exp(\cA)$,
then $g=\exp(x)$ with $x\in\cA=\cA^+\oplus\cA^\ominus$.
Write $x=x_++x_\ominus$. Then $g=\exp(x_+)\exp(x_\ominus)$
with $\exp(x_+)\in (\cA^+)^\times$ and $\exp(x_\ominus)\in(\cA^-)^\times$.
Since $\C[z,z^{-1}]\cap \cA^\times$ is dense,
it remains to factor Laurent polynomials $g$
in this dense set:
$g=z^\kappa p$ with $p(0)\not=0$ then
\[
\qquad\quad \qquad g(z)=\underbrace{c \prod_{|w|>1}(z-w)^{\nu_w}}_{=:g_+(z)}
\,z^\kappa \, \underbrace{\prod_{|w|<1}(z-w)^{\nu_w}}_{=:g_-(z)}.
\]
\end{proof}
\begin{proof} (of Proposition~\ref{birkho-n}).
See Theorem~2.1 in \cite[Chapter~VIII]{GaF}
for the first assertion.
For the final conclusion,
note that the proof of the ``special
case'' of the cited theorem (on pages 189--190)
applies without changes.
In the third line of \cite[page 191]{GaF},
we replace the Lemma~5.1 of loc.\,cit.\
with our hypothesis that $\GL_n(\cA^+)\GL_n(\cA^-)$
is an identity neighbourhood in $\GL_n(\cA)$.
\end{proof}
\begin{la}\label{go-down}
Let $\cA$ and $\cB$ be decomposing $R$-cias
such that $\cA\sub\cB$.
Given $A\in \GL_n(\cA)$,
assume that
\[
A(z)=A_+(z)D(z)A_-(z)=\wt{A}_+(z)\wt{D}(z)\wt{A}_-(z),
\]
where $A_+\in\GL_n(\cA^+)$, $A_-\in\GL_n(\cA^-)$,
$\wt{A}_+\in \GL_n(\cB^+)$, $\wt{A}_-\in \GL_n(\cB^-)$
and $D(z)=\diag(z^{\kappa_1},\ldots,z^{\kappa_n})$,
$\wt{D}(z)=\diag(z^{\nu_1},\ldots,z^{\nu_n})$
with integers $\kappa_1\geq \cdots\geq \kappa_n$
and $\nu_1\geq\cdots\geq \nu_n$.
Then $D=\wt{D}$,
$\wt{A}_+\in \GL_n(\cA^+)$, and $\wt{A}_-\in \GL_n(\cA^-)$.
\end{la}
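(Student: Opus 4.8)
The plan is to recognize both factorizations of $A$ as decompositions of the kind governed by Lemma~\ref{uni2}, and then to read off everything from the transition matrix $C$ that lemma produces. First I would verify the hypotheses of Lemma~\ref{uni2}. Since $\cA$ and $\cB$ are decomposing $R$-cias, \ref{char-deco} shows that each entry of $A_+$ (resp.\ $\wt{A}_+$) extends continuously to $\wb{\bD}$, holomorphically on $\bD$, and each entry of $A_-$ (resp.\ $\wt{A}_-$) extends continuously to $\wh{\C}\setminus\bD$, holomorphically on $\wh{\C}\setminus\wb{\bD}$. To see that the matrix extensions take values in $\GL_n(\C)$, I would invoke the inverse: writing $B_+:=A_+^{-1}\in\GL_n(\cA^+)$, the identity $A_+B_+=\mathbf{1}$ on $\Sph$ passes to the extensions, since the continuous, $\bD$-holomorphic matrix product of the extensions agrees with $\mathbf{1}$ on $\Sph$ and hence on all of $\wb{\bD}$ by the maximum modulus principle applied entrywise; thus the extension of $A_+$ is invertible on $\wb{\bD}$. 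The same argument applies to $A_-$, $\wt{A}_+$, $\wt{A}_-$. As $\cA\sub\cB\sub C(\Sph,\C)$, both factorizations therefore satisfy the hypotheses of Lemma~\ref{uni2}.

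Applying Lemma~\ref{uni2}, with $\wt{A}_\pm$ in the role of the tilded factors, immediately gives $\kappa_j=\nu_j$ for all $j$, that is $D=\wt{D}$, which is the first assertion. It also yields a continuous map $C\colon\C\to\GL_n(\C)$ with $\wt{A}_+(z)=A_+(z)C(z)$ for $z\in\wb{\bD}$ and $\wt{A}_-(z)=D(z)^{-1}C(z)^{-1}D(z)A_-(z)$ for $z\in\C\setminus\bD$, whose entries $c_{kj}$ satisfy conditions (a)--(c). The decisive point is that (a)--(c) force every $c_{kj}$ to be a polynomial in $z$, so $C\in M_n(\C[z])$; by the remark following Lemma~\ref{uni2} the entries of $C^{-1}$ likewise satisfy (a)--(c) and so are polynomials, whence $C\in\GL_n(\C[z])\sub\GL_n(\cA^+)$, using that $\cA^+$ is a unital subalgebra of $\cA$ containing $\C[z]$.

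Restricting $\wt{A}_+=A_+C$ to $\Sph$ now exhibits $\wt{A}_+$ as a product of two elements of $\GL_n(\cA^+)$, so $\wt{A}_+\in\GL_n(\cA^+)$. For the minus factor I would set $E(z):=D(z)^{-1}C(z)^{-1}D(z)$ and compute $E_{kj}=z^{\kappa_j-\kappa_k}(C^{-1})_{kj}$; conditions (a)--(c) for $C^{-1}$ then show that $E_{kj}$ vanishes when $\kappa_k<\kappa_j$, is constant when $\kappa_k=\kappa_j$, and is a polynomial in $z^{-1}$ of degree $\leq\kappa_k-\kappa_j$ when $\kappa_k>\kappa_j$; in every case $E_{kj}\in\C[z^{-1}]$. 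The identical computation for $E^{-1}=D^{-1}CD$ gives $E^{-1}\in M_n(\C[z^{-1}])$, so $E\in\GL_n(\C[z^{-1}])\sub\GL_n(\cA^-)$, where $\cA^-$ is the unital subalgebra of $\cA$ obtained as the closure of $\C[z^{-1}]$. Restricting $\wt{A}_-=E\,A_-$ to $\Sph$ then yields $\wt{A}_-\in\GL_n(\cA^-)$.

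The argument is essentially bookkeeping once Lemma~\ref{uni2} is available; the one point requiring care, and the conceptual heart of the statement, is that the transition data $C$ and its $D$-conjugate $E$ are \emph{Laurent polynomials}, hence lie in $\C[z]$ and $\C[z^{-1}]$ independently of the ambient algebra. This is precisely what allows the a priori merely $\cB$-valued factors $\wt{A}_\pm$ to descend into $\cA$.
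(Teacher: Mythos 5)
Your proof is correct and follows essentially the paper's own route: both apply Lemma~\ref{uni2} to obtain $D=\wt{D}$ together with the transition matrix $C$, and both descend $\wt{A}_\pm$ into $\cA$ by exploiting that the entries of $C$ and $C^{-1}$ are polynomials. The only differences are cosmetic --- where you place $C\in\GL_n(\C[z])\sub\GL_n(\cA^+)$ and $E=D^{-1}C^{-1}D\in\GL_n(\C[z^{-1}])\sub\GL_n(\cA^-)$ directly and multiply within those groups, the paper concludes $C\in\GL_n(\cA)$ and invokes the intersections $\GL_n(\cA)\cap\GL_n(\cB^\pm)=\GL_n(\cA^\pm)$; your preliminary check via the maximum modulus principle that the extensions are $\GL_n(\C)$-valued makes explicit a hypothesis of Lemma~\ref{uni2} that the paper leaves tacit.
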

\begin{proof}
By Lemma~\ref{uni2}, we have $D=\wt{D}$.
Let $C$ be as in Lemma~\ref{uni2}.
Since $C$ and $z\mto C(z)^{-1}$ are polynomial,
we have $C|_\Sph\in \cA^{n\times n}$
and $C^{-1}|_\Sph\in\cA^{n\times n}$,
entailing that $C\in\GL_n(\cA)$.
We now deduce from~(\ref{ba1})
and (\ref{ba2}) that
$\wt{A}_+\in \GL_n(\cA)\cap \GL_n(\cB^+)=\GL_n(\cA^+)$
and $\wt{A}_-\in\GL_n(\cA)\cap\GL_n(\cB^-)=\GL_n(\cA^-)$.
\end{proof}
\begin{prop}\label{birkho-lim}
Let $(\cA_j)_{j\in\N}$
be a sequence of Mackey complete,
decomposing $R$-cias.
Let $n\in\N$ such that $\GL_n(\cA_j^+)\GL_n(\cA^-_j)$
is an identity neighbourhood in $\GL_n(\cA_j)$
for each $j\in\N$.
If $\cA_1\supseteq \cA_2\supseteq\cdots$
and all inclusion maps are continuous,
then also the decomposing $R$-cia $\cA:=\bigcap_{j\in\N}\cA_j=\pl\,\cA_j$
has the property that
$\GL_n(\cA^+)\GL_n(\cA^-)$ is an identity
neighbourhood in $\GL_n(\cA)$.
\end{prop}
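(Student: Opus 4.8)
The plan is to transfer the neighbourhood property from $\cA_1$ down to $\cA$ by combining the projective-limit topology with the descent statement Lemma~\ref{go-down}. First I would record the two structural facts that make everything fit together. On the topological side, the topology of $\cA=\pl\,\cA_j$ is initial with respect to the inclusions $\iota_j\colon\cA\to\cA_j$; each $\iota_j$ is continuous, hence so is the induced inclusion $\GL_n(\cA)\to\GL_n(\cA_j)$ (it acts entrywise), and therefore the preimage of any neighbourhood of $\mathbf 1$ in $\GL_n(\cA_j)$ is a neighbourhood of $\mathbf 1$ in $\GL_n(\cA)$. On the algebraic side, using the Fourier-coefficient characterisation of $\cA^+$ and $\cA^-$ from \ref{char-deco} (available because $\cA$ and each $\cA_j$ is a decomposing $R$-cia by Proposition~\ref{permafrost}(a)), membership is cut out by the $j$-independent conditions $\hat f_k=0$ for $k<0$, resp.\ $k>0$, so $\cA^+=\bigcap_j\cA_j^+$ and $\cA^-=\bigcap_j\cA_j^-$. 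Since $B\in\GL_n(\cA^\pm)$ means precisely that $B$ and $B^{-1}$ both have entries in $\cA^\pm$, this upgrades to $\GL_n(\cA^\pm)=\bigcap_j\GL_n(\cA_j^\pm)$.

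With these in hand, I would fix the neighbourhood coming from $\cA_1$: by hypothesis there is an open set $V\ni\mathbf 1$ in $\GL_n(\cA_1)$ with $V\sub\GL_n(\cA_1^+)\GL_n(\cA_1^-)$, and I claim $W:=V\cap\GL_n(\cA)$ is the desired neighbourhood. It is a neighbourhood of $\mathbf 1$ in $\GL_n(\cA)$ by the first observation, so the whole proposition reduces to proving $W\sub\GL_n(\cA^+)\GL_n(\cA^-)$. Fix $A\in W$. Since $A\in V$ we may write $A=B_+B_-$ with $B_\pm\in\GL_n(\cA_1^\pm)$; this is a factorisation over $\cB:=\cA_1$ whose diagonal part is trivial, $\wt D=\mathbf 1$.

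Now comes the descent step, applied once for each $j$. Fix $j$. Because $A\in\GL_n(\cA)\sub\GL_n(\cA_j)$ and $\cA_j$ is a Mackey complete decomposing $R$-cia for which $\GL_n(\cA_j^+)\GL_n(\cA_j^-)$ is an identity neighbourhood, Proposition~\ref{birkho-n} furnishes a Birkhoff decomposition $A=A_+^{(j)}D^{(j)}A_-^{(j)}$ with factors in $\GL_n(\cA_j^\pm)$. Applying Lemma~\ref{go-down} with the roles $\cA\rightsquigarrow\cA_j$ and $\cB\rightsquigarrow\cA_1$ (legitimate since $\cA_j\sub\cA_1$) to the two factorisations of $A$, I conclude $D^{(j)}=\wt D=\mathbf 1$ and, crucially, $B_+\in\GL_n(\cA_j^+)$ and $B_-\in\GL_n(\cA_j^-)$. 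As $B_\pm$ are independent of $j$ while $j$ was arbitrary, the intersection formula of the first paragraph yields $B_+\in\GL_n(\cA^+)$ and $B_-\in\GL_n(\cA^-)$, whence $A=B_+B_-\in\GL_n(\cA^+)\GL_n(\cA^-)$. This establishes $W\sub\GL_n(\cA^+)\GL_n(\cA^-)$ and proves the claim.

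The main obstacle is conceptual rather than computational: one must recognise that Lemma~\ref{go-down} is exactly a \emph{descent} statement, and that its hypothesis of an already-existing decomposition over the smaller algebra is supplied, uniformly in $j$, by the neighbourhood assumption on $\cA_j$ through Proposition~\ref{birkho-n}. The only other point needing a little care is that intersecting the per-$j$ conclusions is legitimate precisely because of the Fourier-coefficient identity $\GL_n(\cA^\pm)=\bigcap_j\GL_n(\cA_j^\pm)$; notably, Mackey completeness of the limit algebra $\cA$ itself is never invoked in the argument.
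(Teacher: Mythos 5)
Your proof is correct and takes essentially the same route as the paper's: both apply Proposition~\ref{birkho-n} in each $\cA_j$, use Lemma~\ref{go-down} (with $\cA_j\sub\cA_1$) to descend the $\cA_1$-factorization to every $\cA_j$, intersect via $\GL_n(\cA^\pm)=\bigcap_{j\in\N}\GL_n(\cA_j^\pm)$, and obtain the neighbourhood by pulling back along the continuous inclusion $\GL_n(\cA)\to\GL_n(\cA_1)$. Your explicit justification of the intersection identity through the Fourier-coefficient characterization of \ref{char-deco}, and your observation that Mackey completeness of the limit algebra $\cA$ is never invoked, simply make explicit what the paper leaves implicit.
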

\begin{proof}
Let $A\in \GL_n(\cA)$.
Then $A\in\GL_n(\cA_j)$ for
each $j\in\N$.
By Proposition~\ref{birkho-n},
we have
\[
A=A_{j,+}D_jA_{j,-}
\]
with $D_j$ (as described in the proposition)
taking values in diagonal matrices)
and $A_{j,+}\in\GL_n(\cA_j^+)$, $A_{j,-}\in\GL_n(\cA_j^-)$.
By Lemma~\ref{go-down},
we have
\[
A_+:=A_{1,+}\in \GL_n(\cA_j^+)\quad \mbox{and}\quad
A_-:=A_{1,-}\in\GL_n(\cA_j^-)
\]
for each $j\in \N$, whence
\[
A_+\in\bigcap_{j\in \N}\GL_n(\cA_j^+)=\GL_n(\cA^+)\quad\mbox{and}\quad
A_-\in\bigcap_{j\in \N}\GL_n(\cA_j^-)=\GL_n(\cA^-).
\]
The inclusion map $\cA\to\cA_1$
is a continuous homomorphism of unital
algebras, whence the inclusion map
$j\colon \GL_n(\cA)\to\GL_n(\cA_1)$
is a continuous group homomorphism.
Hence
\[
\Omega:=j^{-1}(\GL_n(\cA_1^+)\GL_n(\cA^-))
\]
is an identity neighbourhood in $\GL_n(\cA)$.
By the preceding discussion,
$\Omega\sub \GL_n(\cA^+)\GL_n(\cA^-)$.
\end{proof}
We recall that parts (a) and (b)
of the following proposition can also be found in~\cite{PaS}.
\begin{prop}
Let $n\in\N$.
Consider the following situations:
\begin{itemize}
\item[\rm(a)]
$G:=C^\infty(\Sph,\GL_n(\C))\cong \GL_n(C^\infty(\Sph,\C))$,
$G^+:=C^\infty(\Sph,\GL_n(\C))^+$,
$G^\ominus:=C^\infty(\Sph,\GL_n(\C))^\ominus$,
and $G^-:=C^\infty(\Sph,\GL_n(\C))^-$; or:
\item[\rm(b)]
$G:=C^\omega(\Sph,\GL_n(\C))\cong \GL_n(C^\omega(\Sph,\C))$,
$G^+:=C^\omega(\Sph,\GL_n(\C))^+$,
$G^\ominus:=C^\omega(\Sph,\GL_n(\C))^\ominus$,
and $G^-:=C^\omega(\Sph,\GL_n(\C))^-$; or:
\item[\rm(c)]
$G:=\cO(\C^\times,\GL_n(\C)))$,
$G^+:=\cO(\C^\times,\GL_n(\C))^+$,
$G^\ominus:=\cO(\C^\times,\GL_n(\C))^\ominus$,
and $G^-:=\cO(\C^\times,\GL_n(\C))^-$.
\end{itemize}
Then $G^+G^\ominus$ is open in~$G$.
Each $g\in G$ can be written as
\[
g=g_+Dg_-
\]
with $g_+\in G^+$, $g_-\in G^-$
and $D\colon \Sph\to\GL_n(\C)$,
$z\mto\diag(z^{\kappa_1},\ldots, z^{\kappa_n})$
with integers $\kappa_1\geq \kappa_2\geq \cdots$.
The integers $\kappa_1,\ldots,\kappa_n$
are uniquely determined.
\end{prop}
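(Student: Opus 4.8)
The plan is to separate, in each of the three cases, an \emph{algebraic} part (existence of $g=g_+Dg_-$) from an \emph{analytic} part (openness of $G^+G^\ominus$), and to dispose of the uniqueness of $(\kappa_1,\ldots,\kappa_n)$ once for all. For uniqueness I would restrict every factor to $\Sph$: a factorization of either type yields $g_\pm|_\Sph\in C(\Sph,\GL_n(\C))$ with exactly the continuous, holomorphic extensions required by Lemma~\ref{uni2}, so two factorizations of the same $g$ share their diagonal part. I would also note at the outset that in each case $G^+G^\ominus=\GL_n(\cA^+)\GL_n(\cA^-)$ as sets — writing $\cA$ for the coefficient algebra and absorbing the constant invertible value at $\infty$ of a $G^-$-factor into the adjacent $G^+$-factor — and that, $G$ being a topological group, $G^+G^\ominus$ is open as soon as it is a neighbourhood of the identity.

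For existence I would handle (a) by a projective limit and (b),(c) by annuli. In case~(a), $C^\infty(\Sph,\C)=\pl\,\cW(m)$ with each $\cW(m)$ a Mackey complete decomposing $R$-algebra; for a Banach $R$-algebra, $\GL_n(\cW(m)^+)\GL_n(\cW(m)^-)$ is an identity neighbourhood (the Gohberg--Fel'dman fact invoked in the proof of Proposition~\ref{birkho-n}), so Proposition~\ref{birkho-lim} transfers this to $C^\infty(\Sph,\C)$ and the cia version of Proposition~\ref{birkho-n} supplies the factorization. In cases~(b) and~(c) I would work in the Banach $R$-algebra $\cB_N$ defined as the closure of $\C[z,z^{-1}]$ in $(\cO_b(A_N),\|\cdot\|_N)$; it is an $R$-algebra since Laurent polynomials are dense by construction and $\|\cdot\|_N$ dominates $\|\cdot|_\Sph\|_\infty$. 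A real-analytic $g$ lies in $\GL_n(\cO_b(A_M))$ for some $M$, hence after restriction is holomorphic on a neighbourhood of $\wb{A_{M+1}}$ and so lies in $\GL_n(\cB_{M+1})$; Proposition~\ref{birkho-n} factors it, and the continuous inclusion $\cB_{M+1}\sub C^\omega(\Sph,\C)$ puts the factors in $G^+$ and $G^-$. A $g\in\GL_n(\cO(\C^\times,\C))$ is holomorphic on all of $\C^\times$, hence lies in $\GL_n(\cB_N)$ for every $N\geq2$; after factoring there, the identities $g_+=g\,g_-^{-1}D^{-1}$ and $g_+^{-1}=D\,g_-\,g^{-1}$ (both holomorphic on $\{|z|>1-1/N\}$) promote $g_+$ to an entire $\GL_n(\C)$-valued function, while $g_-=D^{-1}g_+^{-1}g$ then extends holomorphically across $\C^\times$ and (from $\cB_N^-$) to $\infty$, so $g_+\in G^+$ and $g_-\in G^-$.

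The openness is the real content, and the efficient idea is to transport it from the smooth case. Case~(a) is immediate, since the identity-neighbourhood property established above gives openness of $G^+G^\ominus$ by homogeneity. For (b) and (c) I would use that restriction to $\Sph$ is a \emph{continuous} group homomorphism $\iota$ into $\GL_n(C^\infty(\Sph,\C))\cong C^\infty(\Sph,\GL_n(\C))$: continuity at the identity follows from Cauchy estimates on $\wb{A_2}$, which bound every $C^\infty(\Sph)$-seminorm by a single compact-open seminorm, and a homomorphism of topological groups continuous at the identity is continuous. Pulling back the trivial-index identity neighbourhood $\GL_n(C^\infty(\Sph,\C)^+)\GL_n(C^\infty(\Sph,\C)^-)$ from case~(a) yields an identity neighbourhood $V$ in the source. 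For $g\in V$, the loop $g|_\Sph$ factors with $D={\bf 1}$ over $C^\infty(\Sph,\C)$, so its partial indices vanish; since by Lemma~\ref{uni2} these indices depend only on $g|_\Sph$ and not on the algebra, the factorization of $g$ produced above must already have $D={\bf 1}$, whence $g\in G^+G^\ominus$. Thus $V\sub G^+G^\ominus$ and openness follows.

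I expect the analytic openness step, not the factorization, to be the obstacle. Its load-bearing inputs are the Gohberg--Fel'dman identity-neighbourhood property for Banach $R$-algebras (driving case~(a) through Proposition~\ref{birkho-lim}) and, for the transport to (b) and (c), the algebra-independence of the partial indices together with the continuity of the restriction homomorphisms into $\GL_n(C^\infty(\Sph,\C))$. Checking these two points with care — and verifying the annulus bootstrap in case~(c) — is where the work concentrates, whereas the existence of the factorizations reduces directly to Proposition~\ref{birkho-n}.
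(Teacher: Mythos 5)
Your treatment of uniqueness (via Lemma~\ref{uni2}) and of case~(a) (via $C^\infty(\Sph,\C)=\bigcap_m\cW(m)$, Proposition~\ref{birkho-lim} and Proposition~\ref{birkho-n}) coincides with the paper's. But the existence step for cases~(b) and~(c) has a genuine gap: the algebra $\cB_N$, defined as the closure of $\C[z,z^{-1}]$ in $(\cO_b(A_N,\C),\|\cdot\|_N)$, is \emph{not} an $R$-algebra, and your justification ("Laurent polynomials are dense by construction") does not establish this. By definition, an $R$-algebra must contain, densely, the restrictions of \emph{all} rational functions with poles off $\Sph$; but $\frac{1}{z-\lambda}$ with $\lambda\in A_N\setminus\Sph$ is not holomorphic on $A_N$, hence not in $\cB_N$. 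This is not a pedantic point about the definition: the character space of $\cB_N$ is the closed annulus $\wb{A}_N$ (every evaluation at a point of $\wb{A}_N$ is a character), not $\Sph$ as in Lemma~\ref{basicR}, and isospectrality fails — $z-\lambda$ with $1-\frac1N<|\lambda|<1$ is invertible in $C(\Sph,\C)$ but not in $\cB_N$. Since the Gohberg--Fel'dman factorization theorem behind Proposition~\ref{birkho-n} is proved for $R$-algebras and uses exactly these features, you cannot invoke it for $\cB_N$, so your factorizations in cases~(b) and~(c) are unjustified as written. (Note that no Banach algebra of functions holomorphic on a fixed annulus around $\Sph$ can be an $R$-algebra, since poles of rational functions may accumulate at $\Sph$; this is presumably why the paper never factors in such an algebra.) The repair is close to what you already do in case~(c): factor the restriction $g|_\Sph$ in the Wiener algebra $\cW$ — a genuine decomposing $R$-algebra — via Proposition~\ref{birkho-n}, and then upgrade the regularity of the factors across $\Sph$ by the gluing-plus-Morera argument of \ref{rera}: from $g_+=g\,g_-^{-1}D^{-1}$ one glues $g_+^<$ on $\wb{\bD}$ with a continuous extension on the outer part of the annulus where $g$ is holomorphic, obtaining holomorphy of $g_+$ past $\Sph$, and symmetrically for $g_-$. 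This is exactly the gluing technique used in the paper's proof of Theorem~\ref{main-1}.

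Apart from this, your route to \emph{openness} in cases~(b) and~(c) is sound and genuinely different from the paper's: you pull back the trivial-index identity neighbourhood of case~(a) along the continuous restriction homomorphism into $\GL_n(C^\infty(\Sph,\C))$ and use the algebra-independence of the partial indices (Lemma~\ref{uni2}) to conclude $D={\bf 1}$, whereas the paper obtains openness in case~(b) as a consequence of Theorem~\ref{main-2} (proved with Dahmen's theorem and a quantitative inverse function theorem) and only sketches case~(c) via the restriction into $G_\cW$ as in the proof of Theorem~\ref{main-1}. Your transport argument buys the openness assertion of the proposition by elementary means, but note what it does not buy: Theorems~\ref{main-1} and~\ref{main-2} assert in addition that the product map is a $\C$-analytic diffeomorphism, which your argument cannot recover; conversely, the paper's reliance on Theorem~\ref{main-2} makes case~(b) of the proposition dependent on the heavy machinery, so once the existence step is repaired as above, your transport argument is a legitimate and lighter alternative for the openness claim alone.
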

\begin{proof}
(a) The hypotheses of Proposition~\ref{birkho-lim}
are satisfied by $C^\infty(\Sph,\C)=\bigcap_{m\in\N}\cW(m)$,
whence $G^+G^-=G^+G^\ominus$
is an identity neighbourhood
in~$G$ and hence
open, being a $G^+\times G^\ominus$
orbit if we let $G^+$ act on~$G$ by multiplication
on the left and $g\in G^\ominus$ multiplication with $g^{-1}$
on the right.
The remaining assertions now follow from Proposition~\ref{birkho-n}.\\[2.3mm]
(b) The openness of $G^+G^\ominus$
is a special case of Theorem~~\ref{main-1}
(and we shall not use the current statement
before the theorem is proved).
Since $\cA$, being a Silva space,
is complete, the hypotheses of Proposition~\ref{birkho-n}
are satisfied.\\[2.3mm]
(c) The proof is very similar to the beginning
of the proof of Theorem~\ref{main-1}.
Details will be given in a later version
of the preprint.
\end{proof}
\section{Decompositions of loops in {\boldmath$G$}}\label{sec-G}
\begin{proof} (of Proposition~\ref{deco-easy}).
We know from Lemma~\ref{uni1} that
$\pi$ is injective.\\[1.9mm]
\emph{$\pi$ is a local complex-analytic diffeomorphism} (and hence a complex-analytic diffeomorphism)
if it is so at $(e,e)$. In fact, $\pi$ is equivariant for the $\C$-analytic
$G^+_\cA\times G^\ominus_\cA$-actions
\[
(g_+,g_\ominus).(x,y):=(g_+x, yg_\ominus^{-1})
\quad\mbox{and}\quad
(g_+,g_\ominus).z:=g_+zg_\ominus^{-1}
\]
on $G^+_\cA\times G^\ominus_\cA$ and $G_\cA$,
respectively; the first of these is a transitive
action. Hence $\pi$ will be a local complex-analytic
diffeomorphism at each point in its domain
if it is a local complex-analytic diffeomorphism at $(e,e)$.
We have
\[
T_{(e,e)}(G_\cA^+\times G_\cA^\ominus)\cong
T_eG_\cA^+\times T_eG_\cA^\ominus
=\cg_\cA^+\times \cg_\cA^\ominus.
\]
using this identification,
the tangent map $T_{(e,e)}\pi$ is the map
\[
\cg_\cA^+\times \times \cg_\cA^\ominus\to\cg_\cA,\quad (f,g)\mto f+g,
\]
which is an isomorphism of topological
vector spaces.
We can now apply the inverse function theorem for complex-analytic
mappings between open subsets of complex Banach spaces,
to conclude that~$\pi$ is a local complex-analytic
diffeomorphism at $(e,e)$.
\end{proof}
\begin{proof} (of Theorem~\ref{main-1}).
The restriction map
\[
\rho\colon \cO(\C^\times,G)_0\to C^\infty(\Sph,G),\quad f\mto f|_\Sph
\]
is a homomorphism of groups.
It is $C^\infty$ by a suitable
exponential law (as in \cite{AGS}),
since the corresponding map $\rho^\wedge\colon
\cO(\C^\times, G)_0\times\Sph\to G$
is smooth.
The differential $T_e\rho_1$ corresponds to the
restriction map $\cO(\C^\times,\cg)\to C^\infty(\Sph,\cg)$,
whence it is complex linear.
As a consequence, the smooth group homomorphism~$\rho$
is complex analytic. In particular,
$\rho$ is continuous,
whence it takes $\cO(\C^\times, G)_0$
into $C^\infty(\Sph,G)_0$.
The exponential map of
$C^\infty(\Sph,G)_0$
is
\[
\exp_\infty\colon C^\infty(\Sph,\cg)\to C^\infty(\Sph,G)_0,\quad
f\mto \exp_G\circ f.
\]
The exponential map of
$G_\cW$
is
\[
\exp_\cW\colon \cg_\cW\to G_\cW,\quad
f\mto \exp_G\circ f.
\]
The inclusion map $\iota \colon C^\infty(\Sph,\cg)\to\cg_\cW$
is continuous and linear, hence complex analytic.
We have
\[
(\exp_\cW\circ \iota)(f)=\exp_G\circ f=\exp_\infty(f)
\]
for each $f\in C^\infty(\Sph,\cg)$,
whence $\exp_\infty(f)\in G_\cW$.
Inside $C(\Sph,G)$, we therefore have
\[
C^\infty(\Sph,G)_0=\langle \exp_\infty(f)\colon f\in C^\infty(\Sph,\cg)\rangle
\sub \langle \exp_G\circ f\colon f\in\cg_\cW\rangle=G_\cW.
\]
We can therefore consider the inclusion map
$\lambda\colon C^\infty(\Sph,G)_0\to G_\cW$,
which is homomorphism of groups.
It is complex analytic on some open
identity neighbourhood (and hence
complex analytic),
as we have a commuting diagramme
\[
\begin{array}{rcl}
C^\infty(\Sph,G)_0 & \stackrel{\lambda}{\rightarrow} & G_\cW\\
\exp_\infty\uparrow & & \uparrow \exp_\cW\\
C^\infty(\Sph,\cg) & \stackrel{\iota}{\rightarrow} & \cg_\cW
\end{array}
\]
in which $\exp_\cW$ and $\iota$
are complex analytic and $\exp_\infty$
is a local complex analytic diffeomorphism at~$0$,
with $\exp_\infty(0)=e$.
By the preceding, the restriction map
\[
\lambda\circ \rho\colon \cO(\C^\times,G)\to G_\cW,\quad f\mto f|_\Sph
\]
is a complex analytic homomorphism of groups.
In particular, $\lambda\circ \rho$ is continuous,
whence $\Omega:=(\lambda\circ \rho)^{-1}(G_\cW^+G_\cW^\ominus)$
is an open identity neighbourhood in $\cO(\C^\times,G)$.\\[2.3mm]
To see that \emph{the image $\im(p)$ is open},
it suffices to show that $\im(p)$
equals the set~$\Omega$ just defined.
Thus, we need to prove
that $\im(p)$ contains all $f\in \cO(\C^\times,G)$
such that $g:=f|_{\Sph}\in G_\cW^+ G_\cW^\ominus$.
To achieve this, we write $g=g_+g_\ominus$
with $g_+\in G_\cW^+$ and $g_\ominus\in G_\cW^\ominus$.
Then
\[
\wh{f}_+(z):=\left\{
\begin{array}{cl}
g_+^<(z) &\mbox{\,if $\,z\in \wb{\bD}$,}\\
f(z)g_\ominus^>(z) & \mbox{\,if $\,z\in \C\setminus \bD$}
\end{array}\right.
\]
defines a continous
function $\C^\to G$
which is complex analytic on $\C \setminus\Sph$
and hene complex analytic on all of~$\C$.
Likewise,
\[
\wh{f}_\ominus(z):=\left\{
\begin{array}{cl}
g_+^<(z)^{-1}f(z) &\mbox{\,if $\,z\in \wb{\bD}$,}\\
g_\ominus^>(z) & \mbox{\,if $\,z\in \wh{\C}\setminus \bD$}
\end{array}\right.
\]
defines a continous
function $\wh{\C}\setminus\{0\} \to G$
which is complex analytic on $(\wh{\C}\setminus\{0\})\setminus\Sph$
and hence complex analytic on all of~$\wh{\C}\setminus\{0\}$.
Moreover, $\wt{f}_\ominus(\infty)=g_\ominus^>(\infty)=e$.
Then
\[
f=f_+f_\ominus
\]
by the Identity Theorem,
as both functions are complex analytic
and coincide on $\Sph$.
Thus $f=p(\wh{f}_+,\wh{f}_\ominus)\in\im(p)$.\\[2.3mm]
By Lemma~\ref{uni1}, \emph{$p$ is injective}.\\[2.3mm]
\emph{Analyticity of $p^{-1}$, say of $f\mto \widetilde{f}_+$}:
By the exponential law, we only need to show that the corresponding
map
\begin{equation}\label{onlyeq}
\im(p)\times \C\to G,\quad (f,z)\mto \wt{f_+}(z)
\end{equation}
of two variables is
complex analytic.
Actually, complex analyticity of the restriction
of the map in~(\ref{onlyeq})
to $\im(f)\times \bD$,
as map
is equivariant for natural
$\C^\times$-action, $(w.f)(z):=f(wz)$.
We have a $\C$-analytic composition
\[
\begin{array}{cccccccc}
\psi \colon & \im(p)& \longrightarrow & G_\cW& \longrightarrow & G_\cW^+ &
\longrightarrow & \cO(\bD,G)\\
&& f\mto f|_\Sph & & f\mto f_+ & & f\mto \wt{f}|_\bD &
\end{array}
\]
of $\C$-analytic maps.
The leftmost map used here
is complex analytic as a restriction of $\lambda\circ\rho$.
The second map is complex analytic by Proposition~\ref{deco-easy}.
The final map is a group homomorphism
which is complex analytic on some
identity neighbourhood (and hence
complex analytic)
because it is the upper map in the commutative diagramme
\[
\begin{array}{rcl}
G_\cW^+ & \rightarrow & \cO(\bD,G)\\
\exp_{\cW^+}\uparrow & & \uparrow \EXP\\
\cg_\cW^+ & \rightarrow & \cO(\bD,\cg)
\end{array}
\]
where $\exp_{\cW^+}\colon \cg_\cW^+\to G_\cW^+$,
$f\mto \exp_g\circ f$ is a local
complex analytic diffeomorphism at~$0$,
the exponential map
$\EXP\colon \cO(\bD,\cg)\to\cO(\bD,G)$, $f\mto \exp_G\circ f$
is complex analytic and the bottom map is the composition
\[
\cg_\cW^+\to \cO_b(\bD,\cg)\to \cO(\bD,\cg)
\]
of the continuous complex linear maps
$\cg_\cW^+\to \cO(\bD,\cg)$, $f\mto f^<$
(noting that $\|f^<\|_\infty\leq \|f\|_\cW$
as a consequence~(\ref{simply-plus}))
and the complex linear inclusion map
$\cO_b(\bD,\cg)\to \cO(\bD,\cg)$
which is continuous as the topology of
compact convergence on $\cO_b(\bD,\cg)$
is coarser than the topology of uniform convergence.
%
It remains to observe that (\ref{onlyeq})
is the map
\[
\psi^\wedge\colon \im(p)\times \bD\to G,\quad (f,z)\mto \psi(f)(z)
\]
which is $\C$-analytic by the Exponential Law for $\cO(\bD,G)$.\\[2.3mm]
Complex analyticity of the second component of
$p^{-1}$ can be shown
analogously, exploiting
that the map $\phi \wb{\C}\setminus \{0\}\to\C$, $z\mto\frac{1}{z}$
(with $1/\infty:=0$)
is an isomorphism of complex analytic manifolds
and the map
$\cO(\C,G)\to \cO(\wh{\C}\setminus \{0\},G)$,
$f\mto f\circ \phi$ an isomorphism
of complex analytic Lie groups.
\end{proof}
%
%
%
%
\noindent
We shall use a result concerning
complex analyticity of mappings
on open subsets of LB-spaces
(taken from \cite{Dah}).
\begin{la}[Dahmen's Theorem]
Let $E_1\sub E_2\sub\cdots$ and $F_1\sub F_2\sub\cdots$ be complex Banach
spaces such that all inclusion maps have operator norm $\leq 1$.
Assume that the locally convex direct limit topology on $\bigcup_{n\in\N}E_n$
and $F:=\bigcup_{n\in\N}F_n$ is Hausdorff. Let $r>0$. Then
a map $f\colon \bigcup_{n\in \N}B^{E_n}_r(0)\to F$
is complex analytic if
$f|_{B^{E_n}_r(0)} \colon B^{E_n}_r(0)\to F$
is complex analytic \emph{and bounded}
for each $n\in \N$. $\square$
\end{la}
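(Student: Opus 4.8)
The plan is to verify directly the two requirements of the paper's definition of complex analyticity for $f$ on $U:=\bigcup_{n\in\N}B^{E_n}_r(0)$, namely continuity together with a local expansion into continuous homogeneous polynomials, and to isolate where the boundedness hypothesis is genuinely needed. First I would record the soft topological facts. Since the inclusions have operator norm $\leq 1$, the norms decrease along the chain, so $B^{E_j}_r(0)\sub B^{E_n}_r(0)$ for $j\le n$, the union $U$ is increasing, and $U\cap E_n$ is open in $E_n$ for every $n$; hence $U$ is open in the locally convex direct limit $E:=\bigcup_{n\in\N}E_n$. Because $U$ is open, its subspace topology is again the final topology with respect to the traces $U\cap E_n$, so $f\colon U\to F$ is continuous as soon as each restriction $f|_{U\cap E_n}$ is. The latter holds because every point of $U\cap E_n$ has an $E_n$-neighbourhood of the form $B^{E_j}_r(0)\cap E_n$ on which $f$ coincides with the continuous map $f|_{B^{E_j}_r(0)}$ precomposed with the continuous inclusion $E_n\to E_j$. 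Thus continuity of $f$ is free.

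Next I would manufacture the Taylor coefficients. Fix $x_0\in U$, say $x_0\in B^{E_m}_r(0)$. For each $n\ge m$ the analyticity of $f|_{B^{E_n}_r(0)}$ yields continuous homogeneous polynomials $p_k^{(n)}\colon E_n\to F$ with $f(x_0+v)=\sum_{k=0}^\infty p_k^{(n)}(v)$ in $F$ for $v$ in some $E_n$-ball about $0$. Restricting the $E_n$-expansion to $v\in E_{n'}$ with $m\le n'\le n$, and invoking uniqueness of Taylor coefficients over the Banach space $E_{n'}$ (valid since $F$ is Hausdorff, so $F'$ separates points), gives $p_k^{(n)}|_{E_{n'}}=p_k^{(n')}$. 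Hence the $p_k^{(n)}$ amalgamate to a well-defined map $p_k\colon E\to F$, homogeneous of degree $k$, whose restriction to each step is continuous; by the universal property of the direct limit $p_k$ is continuous on $E$, and polarization (a finite linear combination of $p_k$ composed with continuous addition and scaling) shows $p_k$ is a continuous homogeneous polynomial in the required sense.

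The heart of the matter is to promote the per-step expansions to one expansion valid on a neighbourhood of $x_0$ in the direct-limit topology, and this is exactly where boundedness enters. Using the boundedness of $f|_{B^{E_n}_r(0)}$ I would confine the analysis to a single Banach step of $F$: along each complex line the $F$-valued Cauchy integral $\frac{1}{2\pi i}\oint_{|\zeta|=\tau}\zeta^{-k-1}f(x_0+\zeta v)\,d\zeta=p_k^{(n)}(v)$ then exists and obeys the classical estimate $\|p_k^{(n)}(v)\|\le M_n(\|v\|_{E_n}/s)^k$, where $M_n$ bounds $f$ on $B^{E_n}_r(0)$ and $s$ is chosen so that the closed $s$-ball about $x_0$ stays inside $B^{E_n}_r(0)$. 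These uniform-in-$k$ estimates guarantee that on a ball $B^{E_n}_{\delta_n}(0)$ the series $\sum_k p_k(v)$ converges in $F$ to $f(x_0+v)$. Each step independently yields some $\delta_n>0$, and the decisive point about LB-spaces is that the resulting set $W:=\bigcup_{n}B^{E_n}_{\delta_n}(0)$ is still open in $E$, hence a genuine direct-limit neighbourhood of $0$ on which $f(x_0+v)=\sum_k p_k(v)$ for all $v\in W$. Together with the continuity established above, this is precisely the defining property of complex analyticity.

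The main obstacle I anticipate is the interaction of the two direct limits, on the source and on the target. Gluing the coefficients and obtaining continuity are soft consequences of the universal property, but the assertion that the amalgamated series represents $f$ on a single inductive-limit neighbourhood requires uniform control that a priori is available only step by step; converting the per-step Banach estimates into a statement about the generally non-complete space $F$ is what forces the boundedness hypothesis. I would use boundedness, together with the compactness of the integration contour, to place the relevant curve inside a Banach step of $F$, where the Cauchy calculus is classical; verifying this confinement, that is, the existence of the $F$-valued Cauchy integrals in $F$ itself, is the delicate point that the boundedness assumption is designed to handle.
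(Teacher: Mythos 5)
The paper gives no proof of this lemma at all (it is quoted from \cite{Dah} with a closing box), so your attempt has to stand on its own; its skeleton (glue the per-step Taylor coefficients via uniqueness, obtain Cauchy estimates from boundedness, sum on a direct-limit neighbourhood) is indeed the skeleton of Dahmen's argument. But your two key topological claims are false, and they sit exactly where the theorem is nontrivial. First, continuity is \emph{not} free. The topology that the locally convex direct limit induces on the open set $U$ is in general strictly coarser than the final topology determined by the traces $U\cap E_n$: the final topology on $\bigcup_n E_n$ need not even be a vector topology, and the universal property of the \emph{locally convex} direct limit applies to linear maps only. Hence a map whose restrictions to all steps are continuous can perfectly well be discontinuous on $E$ -- this failure is the very reason the boundedness hypothesis is there, and it is the same phenomenon that makes joint continuity of multiplication in inductive limits of Banach algebras a theorem (\cite{AaN,DaW,HaW}, as the paper itself emphasizes) rather than a formality. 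The error recurs when you deduce continuity of the amalgamated coefficient $p_k$ on $E$ ``by the universal property'': $p_k$ is $k$-homogeneous, not linear, and continuity of the associated symmetric $k$-linear map does not pass from the steps $E_n^k$ to $E^k$. In a correct proof both continuity statements must be \emph{extracted from the boundedness}: the estimates $q(p_k(v))\leq M_{n,q}\,(\|v\|_{E_n}/s)^k$ (for $q$ a continuous seminorm on $F$ and $M_{n,q}:=\sup q(f(B^{E_n}_r(0)))$) yield Lipschitz bounds for $q\circ f$ on balls of each step, and for given $q,\ve$ one chooses radii $t_n\sim \ve s/M_{n,q}$ and runs a telescoping mean-value estimate along partial convex combinations to get $q(f(x_0+v)-f(x_0))\leq\ve$ on $\conv\bigcup_n B^{E_n}_{t_n}(0)$, which \emph{is} a $0$-neighbourhood. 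This quantitative Lipschitz control on convex hulls (compare the role of $\Lip$ and \cite{Da2} elsewhere in the paper) is the heart of the matter, and your proposal omits it.

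Second, your ``decisive point'' -- that $W=\bigcup_n B^{E_n}_{\delta_n}(0)$ is open in $E$ for step-dependent radii -- is false whenever $\delta_n\to 0$. Already for $E_n=\Spann\{e_1,\ldots,e_n\}\sub\ell^1$: if $V\sub W$ were convex and absorbing, pick $c_1>0$ with $c_1e_1\in V$ and, for each $N$, some $\ve_N>0$ with $\ve_Ne_N\in V$; then $(1-t)c_1e_1+t\ve_Ne_N\in W$ for $t\in(0,1)$ has support meeting the $N$th coordinate, forcing $(1-t)c_1<\sup_{j\geq N}\delta_j$, and letting $t\to 0$, $N\to\infty$ gives $c_1\leq 0$, a contradiction. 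The repair, which you miss, is that the radius of validity can be taken \emph{uniform}: since the inclusions are norm-nonincreasing, the margin $s:=r-\|x_0\|_{E_m}$ is available in every step $n\geq m$, and the constants $M_{n,q}$ affect only the size of the estimates, not the radius of convergence; so $\delta_n=s/2$ works for all $n$, and $\bigcup_n B^{E_n}_{s/2}(0)$ is an increasing union of convex sets, hence convex with neighbourhood traces, hence a genuine $0$-neighbourhood. (To identify $\sum_k p_k(v)$ with $f(x_0+v)$ on the whole ball $B^{E_n}_{s/2}(0)$, extend the a priori small domain of validity by the scalar identity theorem after composing with $\lambda\in F'$, then combine seminorm-Cauchyness of the partial sums with weak convergence, closed convex sets being weakly closed -- this also disposes of your worry about vector-valued Cauchy integrals in the possibly incomplete space $F$: the $p_k$ exist by the per-step hypothesis and all estimates can be carried out scalarly, whereas the ``confinement into a Banach step of $F$'' you propose would require a regularity property of the limit $F$ that is not among the hypotheses.)
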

We recall a well-known concept.
\begin{defn}
If $f\colon X\to Y$
is a mapping between
metric spaces $(X,d_X)$
and $(Y,d_Y)$, let
\[
\Lip(f):=\sup\left\{
\frac{d_Y(f(x),f(y))}{d_X(x,y)}\colon \mbox{for $x,y\in X$ with $x\not=y$}\right\}
\in [0,\infty].
\]
\end{defn}
Then $f$ is Lipschitz continuous if and only if
$\Lip(f)<\infty$
and $\Lip(f)$ is the smallest Lipschitz constant for~$f$
in the case.
Also the following fact is well known (see, e.g.,
\cite{GaN}):
\begin{numba}\label{difflip}
Let $(E,\|\cdot\|_E)$
and $(F,\|\cdot\|_F)$
be Banach spaces and $U\sub E$
be a convex open subset.
If $f\colon U\to F$ is
continuously Fr\'{e}chet differentiable
and $f'(x)\in\cL(E,F)$ its derivative at $x\in U$,
then
\[
\Lip(f)=\sup\{\|f'(x)\|_{\op}\colon x\in U\}.
\]
\end{numba}
We shall always assume that the
norm $\|\cdot\|_\cg$ on a Banach--Lie algebra~$\cg$
is chosen submultiplicative,
i.e., that
\[
\|[x,y]\|_\cg\leq \|x\|_\cg\|y\|_\cg\quad\mbox{for all $\,x,y\in\cg$.}
\]
\begin{example}\label{Wiener-sm}
For each complex Banach--Lie algebra $\cg$
with submultiplicative norm $\|\cdot\|_\cg$,
the norm $\|\cdot\|_\cW$
on $\cg_\cW$ is submultiplicative.
In fact, the map
\[
\cg_\cW\to \ell^1(\Z,\cg),\quad f\mto (\hat{f}_k)_{k\in \Z}
\]
is isometric and an isomorphism of complex
Lie algebras, when the norm
$\|\cdot\|_{\ell^1,\cg}$ defined via
\[
\|(a_k)_{k\in\Z}\|_{\ell^1,\cg}:=\sum_{k\in\Z}\|a_k\|_\cg
\]
is used on $\ell^1(\Z,\cg)$.
The Lie bracket on $\ell^1(\Z,\cg)$ is
the Cauchy--Lie bracket, $[(a_k)_{k\in\Z},(b_k)_{k\in\Z}]:=(c_k)_{k\in \Z}$
with
\[
c_k:=\sum_{\ell\in\Z}[a_\ell, b_{k-\ell}].
\]
Setting $\alpha_k:=\|a_k\|_\cg$
and $\beta_k:=\|b_k\|_\cg$ for $k\in\Z$,
we have $\alpha:=(\alpha_k)_{k\in\Z}\in \ell^1$
and $\beta:=(\beta_k)_{k\in\Z}\in \ell^1$.
Now
\begin{eqnarray*}
\|(c_k)_{k\in\Z}\|_{\ell^1,\cg}&\leq& \sum_{\ell\in\Z}\|[a_\ell,b_{k-\ell}]\|_\cg
\leq \sum_{\ell\in\Z}\|a_\ell\|_\cg\|b_{k-\ell}\|_\cg\\
&=& \|\alpha * \beta\|_{\ell^1}\leq \|\alpha\|_{\ell^1}\|\beta\|_{\ell^1}
=\|(a_k)_{k\in \Z}\|_{\ell^1,\cg}\|(b_k)_{k\in\Z}\|_{\ell^1,\cg},
\end{eqnarray*}
showing that $\|\cdot\|_{\ell^1,\cg}$
(and hence also $\|\cdot\|_{\cW}$ on $\cg_\cW$)
is submultiplicative.
If $f\in \cg_\cW$,
then $f=f_++f_\ominus$
with $f_+\in \cg_\cW^+$ and $f_\ominus\in \cg_\cW^\ominus$
given by
\[
f_+(z):=\sum_{k=0}^\infty \hat{f}_kz^k\quad\mbox{and}\quad
f_\ominus(z):=\sum_{k=1}^\infty\hat{f}_{-k}z^{-k}
\]
for $z\in\Sph$. Since $f_+^<\colon \wb{\bD}\to\cg$
and $f_-^>\colon \wh{\C}\setminus \bD\to\cg$
are given by (\ref{simply-plus})
and (\ref{simply-minus}), respectively
(with $f_-^>(\infty):=0$),
we see that
\begin{equation}\label{pr1wiener}
\|f_+^<\|_\infty\leq \|(\|\hat{f}_k\|_\cg)_{k\in\N}\|_{\ell^1}
\leq \|(\|\hat{f}_k\|_\cg)_{k\in \Z}\|_{\ell^1}=\|f\|_\cW
\end{equation}
and
\begin{equation}\label{pr2wiener}
\|f_\ominus^>\|_\infty\leq \|(\|\hat{f}_{-k}\|_\cg)_{k\in\N}\|_{\ell^1}
\leq \|(\|\hat{f}_k\|_\cg)_{k\in\Z}\|_{\ell^1}=\|f\|_\cW.
\end{equation}
\end{example}
\begin{la}\label{universal-BCH}
There exists $r>0$
with the following properties:
\begin{itemize}
\item[\rm(a)]
For each complex Banach--Lie algebra
$\cg$ with submultiplicative norm $\|\cdot\|_\cg$,
the BCH-series converges
on $B^\cg_r(0)\times B^\cg_r(0)$
to a complex-analytic mapping
\[
\mu_\cg\colon B^\cg_r(0)\times B^\cg_r(0)\to \cg,\quad
(x,y)\mto x*y.
\]
\item[\rm(b)]
If we write $x*y=x+y+R_\cg(x,y)$
for $(x,y)\in B^\cg_r(0)\times B^\cg_r(0)$
in the situation of~{\rm(a)},
then
\[
\Lip(R_\cg)\leq \frac{1}{4}.
\]
\end{itemize}
We are using the maximum
norm $\cg\times \cg\to[0,\infty[$, $(x,y)\mto \max\{\|x\|_\cg,\|y\|_\cg\}$
here for the calculation
of the Lipschitz constant.
\end{la}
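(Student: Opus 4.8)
The plan is to reduce the whole statement to a single universal estimate on the coefficients of the Hausdorff series, and then to fix $r$ by hand. First I would expand $x*y=\sum_{n\ge 1}H_n(x,y)$, where $H_n$ is the universal, $\cg$-independent Hausdorff polynomial of degree~$n$; using Dynkin's explicit formula I would write $H_n$ as a finite linear combination of right-nested iterated brackets of $x$ and~$y$ with explicit rational coefficients, and let $c_n\ge 0$ be the sum of the absolute values of these coefficients. Since $\|\cdot\|_\cg$ is submultiplicative, every iterated bracket of degree~$n$ whose arguments are drawn from $\{x,y\}$ has norm at most $t^n$, where $t:=\max\{\|x\|_\cg,\|y\|_\cg\}$; hence $\|H_n(x,y)\|_\cg\le c_n t^n$. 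The crucial point is that the constants $c_n$ do not depend on~$\cg$.

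Next I would produce a uniform positive radius of convergence. Grouping Dynkin's formula by total degree~$n$, the factor $\frac{1}{\sum_i(p_i+q_i)}$ becomes $\frac1n$, and summing over the number~$m$ of blocks $(p_i,q_i)$ yields the coefficientwise bound $c_n\le\frac1n[z^n]\bigl(-\log(2-e^{2z})\bigr)$, because $\sum_{m\ge 1}\frac1m(e^{2z}-1)^m=-\log(2-e^{2z})$ has non-negative Taylor coefficients. The function $-\log(2-e^{2z})$ is holomorphic near~$0$ with radius of convergence $\frac{\ln 2}{2}$ (its nearest singularity is where $e^{2z}=2$), so the majorant series $\sum_n c_n z^n$ has radius of convergence $\rho\ge\frac{\ln 2}{2}>0$, and this $\rho$ is the same for every~$\cg$. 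Consequently, for any $r<\rho$ and $(x,y)\in B^\cg_r(0)\times B^\cg_r(0)$ the Hausdorff series converges absolutely and uniformly; as $\mu_\cg$ is then a locally uniform limit of the continuous homogeneous polynomials $\sum_{n\le M}H_n$, it is complex analytic by the Weierstrass-type theorem for holomorphic maps into a Banach space. This establishes~(a).

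For~(b) I would differentiate termwise. Since $H_n$ is $n$-homogeneous, the product rule expresses $dH_n(x,y)(h,k)$ as a sum of $n$ iterated brackets of degree~$n$, each having one argument equal to $h$ or~$k$ and the remaining $n-1$ arguments in $\{x,y\}$; submultiplicativity then gives $\|dH_n(x,y)(h,k)\|_\cg\le n\,c_n\,t^{n-1}s$ with $s:=\max\{\|h\|_\cg,\|k\|_\cg\}$, i.e.\ $\|dH_n(x,y)\|_{\op}\le n c_n t^{n-1}$ for the maximum norm on $\cg\times\cg$. Writing $R_\cg=\sum_{n\ge 2}H_n$ and summing (the differentiated series converges uniformly for $r<\rho$, which also justifies the termwise differentiation), I obtain for $(x,y)\in B^\cg_r(0)\times B^\cg_r(0)$ the bound
\[
\|dR_\cg(x,y)\|_{\op}\le\sum_{n\ge 2}n\,c_n\,r^{n-1}=:S(r).
\]
Because $\sum_n c_n z^n$ has positive radius of convergence, $S(r)$ is finite for small~$r$ and $S(r)\to 0$ as $r\to 0^+$ (its lowest term is $2c_2 r$). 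I therefore fix once and for all an $r\in(0,\rho)$ so small that $S(r)\le\frac14$; this $r$ is universal. Finally $B^\cg_r(0)\times B^\cg_r(0)$ is convex and open and $R_\cg$ is $C^1$, so \ref{difflip} gives $\Lip(R_\cg)=\sup_{(x,y)}\|dR_\cg(x,y)\|_{\op}\le S(r)\le\frac14$, which is~(b).

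The only genuinely non-routine ingredient is this universal coefficient estimate: I must ensure both that the $c_n$ are truly independent of~$\cg$ and that the scalar majorant $-\log(2-e^{2z})$ really dominates them, since this single computation simultaneously supplies the uniform positive radius needed for~(a) and the smallness of $S(r)$ needed for the $\frac14$ Lipschitz bound in~(b). Everything after that bound is bookkeeping with submultiplicativity and \ref{difflip}.
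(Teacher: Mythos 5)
Your proposal is correct and follows essentially the same route as the paper: the paper likewise uses \ref{difflip} to reduce the Lipschitz claim (b) to the derivative bound $\|\mu_\cg'(x,y)-\alpha_\cg\|_{\op}\leq\frac14$ on $B^\cg_r(0)\times B^\cg_r(0)$ and then invokes the majorization argument of \cite[Lemma~3.5\,(a)]{Da2} (stated there with the constant $\frac12$), whose content is precisely the universal Dynkin-series estimate you carry out explicitly. The only difference is that you unpack the cited computation — deriving the $\cg$-independent coefficient bound $c_n\leq\frac1n[z^n]\bigl(-\log(2-e^{2z})\bigr)$ and shrinking $r$ so that $\sum_{n\geq 2}nc_nr^{n-1}\leq\frac14$ — which makes the argument self-contained but is not a different method.
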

\begin{proof}
Let $\alpha_\cg\colon \cg\times\cg\to \cg$
be the continuous linear map $(x,y)\mto x+y$.
By \ref{difflip},
the final assertion means that $\|\mu'_\cg(x,y)-\alpha_\cg\|_{\op}\leq\frac{1}{4}$
for all $(x,y)\in B^\cg_r(0)\times B^\cg_r(0)$.
The assertion can therefore be proved like
\cite[Lemma~3.5\,(a)]{Da2},
where the the constant
$\frac{1}{2}$ is used in place of~$\frac{1}{4}$.
\end{proof}
\begin{numba}
If $\cg$ is a Banach--Lie algebra
with submultiplicative norm
$\|\cdot\|_\cg$,
we endow $\cO_b(A_n,\cg)$
with the norm $\|\cdot\|_n$ given by
\[
\|f\|_n:=\max\{\|f\|_\infty,\|f|_\Sph\|_\cW\}.
\]
Using the compact-open $C^\infty$-topology on the function
spaces in the middle,
we get continuous complex linear maps
\[
(\cO_b(A_n,\cg),\|\cdot\|_\infty)
\to C^\infty(A_n,\cg)\to C^\infty(\Sph,\cg)\to \cg_\cW,
\]
where the first and final mappings are inclusion maps;
the map in the middle is the restriction map $f\mto f|_\Sph$.
Thus, the composition
\[
\rho\colon (\cO_b(A_n,\cg),\|\cdot\|_\infty)\to \cg_\cW,\quad f\mto f|_\Sph
\]
is a continuous complex linear map,
entailing that $\|\cdot\|_\cW\circ\rho$
is a continuous norm on $(\cO_b(A_n,\cg),\|\cdot\|_\infty)$.
As a consequence, the norm
\[
\|\cdot\|_n\colon \cB(A_n,\cg)\to [0,\infty[,\quad
f\mto\max\{\|f\|_\infty,\|f|_\Sph\|_\cW\}
\]
is equivalent to~$\|\cdot\|_\infty$.
Since $\|[f,g](x)\|_\cg=\|[f(x),g(x)]\|_\cg\leq \|f(x)\|_\cg\|g(x)\|_\cg\leq
\|f\|_\infty\|g\|_\infty$ and
\[
\|[f,g]|_\Sph\|_\cW\leq\|f|_\Sph\|_\cW\|g|_\Sph\|_\cW
\]
by Example~\ref{Wiener-sm},
the norm $\|\cdot\|_n$
is submultiplicative.
We now show that each $f\in \cO_b(A_n,\cg)$
admits a decomposition
\[
f=f_++ f_\ominus
\]
with $f_+,f_\ominus\in \cO_b(A_n,\cg)$
such that $f_+$ admits a complex analytic extension
$f_+^<\colon B^\C_{1+\frac{1}{n}}(0)\to\cg$
and $f_-$ admits a complex analytic extension
$f_-^>\colon \wh{\C}\setminus \wb{B}^\C_{1-\frac{1}{n}}(0)\to\cg$
with $f_-^>(\infty)=0$.
In fact, we write $g:=f|_\Sph\in\cg_\cW$
in the form $g=g_++ g_\ominus$
with $g_+\in\cg_\cW^+$ and $g_\ominus\in\cg_\cW^\ominus$.
Then
\[
f_+^<\colon B^\C_{1+\frac{1}{n}}(0)\to\cg,\quad
z\mto\left\{\begin{array}{cl}
g_+^<(z) & \mbox{\,if $\,z\in \wb{\bD}$;}\\
f(z)-g_\ominus^>(z) & \mbox{\,if $\,z\in B^\C_{1+\frac{1}{n}}(0)\setminus \bD$}
\end{array}\right.
\]
is a continuous function on $B^\C_{1+\frac{1}{n}}(0)$
which is holomorphic on $B^\C_{1+\frac{1}{n}}(0)\setminus\Sph$
and hence holomorphic.
Likewise,
\[
f_\ominus^>\colon \wh{\C}\setminus \wb{B}^\C_{1-\frac{1}{n}}(0)\to\cg,
\quad
z\mto\left\{\begin{array}{cl}
g_\ominus^>(z) & \mbox{\,if $\,z\in \wh{\C}\setminus \bD$;}\\
f(z)-g_+^<(z)& \mbox{\,if $\,z\in \bD$}
\end{array}\right.
\]
is a continuous function on $\wh{\C}\setminus \wb{B}^\C_{1-\frac{1}{n}}(0)$
which is holomorphic on $(\wh{\C}\setminus \wb{B}^\C_{1-\frac{1}{n}}(0))
\setminus\Sph$
and hence holomorphic.
We shall verify presently that $f_+:= (f_+^<)|_{A_n}$
and $f_\ominus := (f_-^>)|_{A_n}$
are in $\cO_b(A_n,\cg)$; in fact,
\begin{equation}\label{opno-proj}
\|f_+\|_n\leq 2\,\|f\|_n\quad \mbox{and}\quad \|f_-\|_n\leq
2\,\|f\|_n.
\end{equation}
By construction, $f=f_++ f_\ominus$.
We have $\|f_+|_\Sph\|_\cW=\|g_+\|_\cW\leq \|g\|_\cW\leq \|f\|_n\leq 2\,\|f\|_n$.
Since
\[
\|f_+(z)\|_\cg=\|g_+^<(z)\|_\cg\leq \|g\|_\cW\leq \|f\|_n\leq 2\,\|f\|_n
\]
for all $z\in \C$ with $1-\frac{1}{n}<|z|\leq 1$
and
\[
\|f_+(z)\|_\cg\leq \|f(z)\|_\cg+\|g_\ominus^>(z)\|_\cg\leq\|f\|_\infty+\|g\|_\cW\leq
\|f\|_\infty+\|f\|_n\leq 2\,\|f\|_n
\]
for all $z\in\C$ with $1\leq |z|<1+\frac{1}{n}$,
we see that $\|f_+\|_\infty\leq 2\,\|f\|_n$,
whence
$f_+\in\cO_b(A_n,\cg)$. Moreover,
$\|f_+\|_n=\max\{\|f_+|_\Sph\|_\cW,\|f_+\|_\infty\}\leq 2\,\|f\|_n$.
The proofs for $f_\ominus\in \cO_b(A_n,\cg)$
and the second inequality in (\ref{opno-proj})
are analogous.
\end{numba}
We shall also use a
well-known quantitative
version of the inverse function theorem:
\begin{la}\label{quanti-inv}
Let $(E,\|\cdot\|_E)$
be a Banach space, $x\in E$, $r>0$
and $f\colon B^E_r(x)\to E$
be a complex-analytic mapping such that
$f'(x)=\id_E$ and
\[
\Lip(f-\id_E)\leq \frac{1}{2}.
\]
Then $f(B^E_r(x))$ is open in~$E$,
the map $f\colon B^E_r(x))\to f(B^E_r(x))$
is a complex analytic diffeomorphism,
and
\begin{equation}\label{halfball}
B^E_{r/2}(f(x))\;\sub \; f(B^E_r(x))\; \sub \; B^E_{3r/2}(f(x)).
\end{equation}
\end{la}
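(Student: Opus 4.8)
The plan is to reduce everything to the Lipschitz bound on the ``nonlinear part'' of $f$ and then invoke the Banach fixed point theorem together with the holomorphic inverse function theorem already used in the proof of Proposition~\ref{deco-easy}. Write $g:=f-\id_E$, so that $g$ is complex-analytic and, by~\ref{difflip}, $\|g'(y)\|_{\op}\le\Lip(g)\le\tfrac12$ for every $y\in B^E_r(x)$; in particular $g$ is $\tfrac12$-Lipschitz on the convex set $B^E_r(x)$.

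First I would establish injectivity and a lower Lipschitz estimate. For $y_1,y_2\in B^E_r(x)$, the decomposition $f(y_i)=y_i+g(y_i)$ gives
\[
\|f(y_1)-f(y_2)\|\ge \|y_1-y_2\|-\|g(y_1)-g(y_2)\|\ge\tfrac12\|y_1-y_2\|,
\]
so $f$ is injective and any inverse is $2$-Lipschitz, hence continuous. The right-hand inclusion in~(\ref{halfball}) is then immediate from the triangle inequality, $\|f(y)-f(x)\|\le\|y-x\|+\|g(y)-g(x)\|\le\tfrac32\|y-x\|<\tfrac32 r$, whence $f(B^E_r(x))\sub B^E_{3r/2}(f(x))$.

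The core is a fixed-point step that simultaneously yields surjectivity onto the half-ball and openness. Solving $f(y)=w$ is equivalent to finding a fixed point of $T_w(y):=y-f(y)+w=w-g(y)$, and $\|T_w(y_1)-T_w(y_2)\|=\|g(y_1)-g(y_2)\|\le\tfrac12\|y_1-y_2\|$ shows $T_w$ is a contraction. For the inclusion $B^E_{r/2}(f(x))\sub f(B^E_r(x))$ I take $w$ with $\|w-f(x)\|<r/2$, observe $\|T_w(x)-x\|=\|w-f(x)\|$, choose $\rho$ with $2\|w-f(x)\|\le\rho<r$, and verify that $T_w$ maps the complete set $\wb{B}^E_\rho(x)$ into itself; Banach's theorem then produces a fixed point $y\in\wb{B}^E_\rho(x)\sub B^E_r(x)$ with $f(y)=w$. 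Running the identical argument centred at an arbitrary $y_0\in B^E_r(x)$, with any $\delta>0$ satisfying $\wb{B}^E_\delta(y_0)\sub B^E_r(x)$, shows $B^E_{\delta/2}(f(y_0))\sub f(B^E_r(x))$, so the image is open.

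Finally, for complex analyticity of the inverse I would invoke the holomorphic inverse function theorem between open subsets of complex Banach spaces. Since $\|g'(y)\|_{\op}\le\tfrac12<1$, the differential $f'(y)=\id_E+g'(y)$ is invertible by a Neumann series (with $\|f'(y)^{-1}\|_{\op}\le2$) at every $y\in B^E_r(x)$, so $f$ is a local complex-analytic diffeomorphism near each point; being globally injective with open image and continuous inverse, $f\colon B^E_r(x)\to f(B^E_r(x))$ is a complex-analytic diffeomorphism. I expect no genuine obstacle here: the only delicate points are the bookkeeping that keeps the fixed point inside the \emph{open} ball $B^E_r(x)$ (handled by the strict bound $\rho<r$) and the sharp tracking of the constants $r/2$ and $3r/2$ through the estimates.
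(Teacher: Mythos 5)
Your proof is correct, and it splits exactly along the same fault line as the paper's: for the complex analyticity of the inverse you argue verbatim as the paper does (Neumann series gives $f'(y)=\id_E+g'(y)\in\GL(E)$ since $\|g'(y)\|_{\op}\leq\Lip(f-\id_E)\leq\frac{1}{2}<1$, then the classical holomorphic inverse function theorem upgrades the bijection to a $\C$-analytic diffeomorphism). The difference lies in the topological half. The paper simply cites Lemma~6.1\,(a) of \cite{IMP} for the openness of the image, the homeomorphism property, and the two inclusions in~(\ref{halfball}); you instead prove these from scratch via the Banach fixed point theorem, and your bookkeeping is sound: the lower estimate $\|f(y_1)-f(y_2)\|\geq\frac{1}{2}\|y_1-y_2\|$ gives injectivity and a $2$-Lipschitz inverse, the contraction $T_w(y)=w-g(y)$ maps $\wb{B}^E_\rho(x)$ into itself precisely when $2\|w-f(x)\|\leq\rho<r$ (so the fixed point stays in the open ball), and recentring at an arbitrary $y_0$ yields openness of the image. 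What your route buys is self-containedness — the cited lemma of \cite{IMP} is itself proved by essentially this contraction argument, so you have in effect inlined the external reference at the cost of a page of standard estimates; what the paper's route buys is brevity. One cosmetic remark: as you implicitly noticed, the hypothesis $f'(x)=\id_E$ is never needed in your argument (it already follows up to the error $g'(x)$ from the Lipschitz bound), which is consistent with the fact that the paper only uses it through the same Neumann-series estimate.
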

\begin{proof}
By \cite[Lemma~6.1\,(a)]{IMP},
$f$ has open image,
is a homeomorphism onto its image,
and~(\ref{halfball}) holds.
Since $\|f'(y)-\id_E\|_{\op}\leq \Lip(f-\id_E)\leq\frac{1}{2}<1$,
we have $f'(y)\in\GL(E)$ for each $y\in B^E_r(x)$,
whence $f$ is a local complex-analytic diffeomorphism
by the classical inverse function theorem
for complex-analytic mappings and hence
a complex-analytic diffeomorphism.
\end{proof}
\begin{proof} (of Theorem~\ref{main-2}).
For $n\in\N$, let $\cg_n:=\cO_b(A_n,\cg)$,
endowed with the submultiplicative
norm $\|\cdot\|_n$ from above.
Let $r$, $\mu_{\cg_n}$
and $R_{\cg_n}$
be a in Lemma~\ref{universal-BCH}.
The continuous complex linear maps
\[
\pr^+_n\colon \cg_n\to \cg_n^+,\quad f\mto f^+
\]
and $\pr^\ominus_n\colon \cg_n\to\cg_n^\ominus$, $f\mto f^\ominus$
have operator norms $\|\pr^+_n\|_{\op},\|\pr^\ominus_n\|_{\op}\leq 2$,
by~(\ref{opno-proj}).
Hence
\[
R_n\colon B^{\cg_n}_{r/2}(0)\to \cg,\quad
x\mto R_{\cg_n}(\pr^+_n(x),\pr^\ominus_n(x))
\]
satisfies $\Lip(R_n)\leq 2\,\Lip(R_{\cg_n})\leq \frac{1}{2}$.
For all $x\in B^\cg_{r/2}(x)$,
we have $x_+=\pr^+_n(x)\in B^{\cg_n}_r(0)$
and $x_\ominus=\pr^\ominus_n(x)\in B^{\cg_n}_r(0)$.
Moreover,
\[
\mu_{\cg_n}(\pr^+_n(x),\pr^-_n(x))=x+R_n(x),
\]
since $x_+ *x_\ominus= x_++x_\ominus+R_{\cg_n}(x_+,x_\ominus)
=x+R_n(x)$. Hence, the complex-analytic map
\[
h_n\colon B^{\cg_n}_{r/2}(0)\to\cg_n,\quad x\mto \pr^+_n(x)*\pr^\ominus_n(x)
\]
is of the form
\[
h_n=\id_{\cg_n}+R_n\quad \mbox{with $\Lip(R_n)\leq \frac{1}{2}$
and $h_n'(0)=\id_{\cg_n}$.}
\]
by Lemma~\ref{quanti-inv},
$h_n$ is a complex-analytic diffeomorphism
onto its open image $h_n(B^{\cg_n}_{r/2}(0))$,
and the latter contains $B^{\cg_n}_{r/4}(0)$
as a subset.
Now $C^\omega(\Sph,\cg)=\dl\,\cg_n$.
Identifying holomorphic functions on annuli
with their restrictions to $\Sph$, we simply have $\cg_n\sub \cg_{n+1}$
for each $n\in\N$ and
$C^\omega(\Sph,\cg)=\bigcup_{n\in \N}\cg_n$.
Using these identifications,
\[
U:=\bigcup_{n \in\N}B^{\cg_n}_{r/2}(0)\quad\mbox{and}\quad
V:=\bigcup_{n\in \N}B^{\cg_n}_{r/4}(0)
\]
are open zero-neighbourhoods in $C^\omega(\Sph,\cg)$.
We have a well-defined injective map
\[
h \colon U\to C^\omega(\Sph,\cg),\quad f\mto h_n(f)\quad\mbox{if $\,f\in B^{\cg_n}_{r/2}(0)$}
\]
which is holomorphic (by Dahmen's Theorem)
since $h|_{B^{\cg_n}_{r/2}(0)}=h_n$
is holomorphic and bounded for each $n\in\N$.
By continuity of~$h$, the preimage $W:=h^{-1}(V)$
is open in~$U$.
Now $h|_W \colon W\to V$
is a continuous bijection.
The map
\[
g\colon V\to C^\omega(\Sph,\cg),\quad f\mto h_n^{-1}(f)\quad\mbox{if
$\,f\in B^{\cg_n}_{r/4}(0)$}
\]
is holomorphic (by Dahmen's Theorem),
since $g|_{B^{\cg_n}_{r/4}(0)}=h_n^{-1}|_{B^{\cg_n}_{r/4}(0)}$
is holomorphic and bounded for each $n\in\N$.
By definition, $g=(f|_W)^{-1}$.
Thus $f|_W\colon W\to V$
is a complex-analytic diffeomorphism.
Abbreviate $\cA:=C^\omega(\Sph,\C)$.
Let $\exp_+$ and $\exp_\ominus$
be the exponential
functions of the BCH-Lie groups $G_\cA^+$
and $G_\cA^-$, respectively.
We have a commutative diagramme
\[
\begin{array}{rcl}
G_\cA^+\times G_\cA^\ominus & \stackrel{\pi}{\longrightarrow} & C^\omega(\Sph,G)\\
\exp_+\times\exp_\ominus \uparrow\;\;\;\; & & \;\; \hspace*{-.2mm}\uparrow\\
\cg_\cA^+\times \cg_\cA^\ominus & & \;\;\hspace*{+.2mm}| \\
(\pr^+,\pr^\ominus) \uparrow\;\;\;\; & & \;\;\hspace*{+.2mm}|\\
W \;\;\; & \stackrel{h|_W}{\longrightarrow} & \;V,
\end{array}
\]
where the composition of the vertical maps on the left
is a local complex-analytic diffeomorphism at~$0$
and the vertical map on the right-hand side
is the exponential function of the Lie group $C^\omega(\Sph,G)$,
which is a complex-analytic function.

The assertion follows.
\end{proof}
\begin{proof} (of Theorem~\ref{main-3}).
Abbreviate $\cA:=C^\infty(\Sph,\C)$.
If $f\in G_\cW^+G_\cW^\ominus\cap C^\infty(\Sph,G)$,
we have
\[
f=g_+g_\ominus
\]
with certain $g_+\in G_\cW^+$ and $g_\ominus\in G_\cW^\ominus$.
On the other hand, since $G\sub\GL_n(\C)$,
we have
\[
f=f_+Df_-
\]
for suitable elements $f\in \GL_n(\cA^+)$, $f_-\in \GL_n(\cA^-)$,
and $D(z)=\diag(z^{\kappa_1},\ldots,z^{\kappa_n})$
with $\kappa_1\geq\cdots\geq \kappa_n$.
Since $g_+\in \GL_n(\cA^+)$ and $g_\ominus\in \GL_n(\cA^-)$,
Proposition~\ref{uni2}
shows that $\kappa_1=\cdots=\kappa_n=0$
and $f_+=g_+C$, $f_-=C^{-1}g_-$
for some constant matrix $C \in \GL_n(\C)$.
Thus $g_+\in \GL_n(\cA^+)\cap G_\cW^+$
and $g_-\in \GL_n(\cA^-)\cap G_\cW^\ominus$.
For $g\in G_\cA$ close to~$e$
in $G_\cW$
(which defines an identity neighbourhood
in $C^\infty(\Sph,G)$),
one can show that $g_+\in C^\infty(\Sph,G)^+$
and $g_-\in C^\infty(\Sph,G)^\ominus$.
The idea is to write $g_+=\exp_G\circ h$
with $h\in C^\infty(\Sph,\cg)\cap \cg_\cW=C^\infty(\Sph,\cg)^+$.
Then $g=\exp_G\circ h\in G_\cA^+\sub C^\infty(\Sph,G)^+$.
Likewise for~$g_\ominus$.
Thus $G_\cA^+G_\cA^\ominus$
is an identity neighbourhood in $C^\infty(\Sph,G)$
and hence an open identity neighbourhood,
being an orbit.\\[2.3mm]
Using the projective limit property as a complex manifolds,
the complex analyticity of $\pr^+\colon G^+G^\ominus\to G^+$
follows from that of the corresponding maps
$G_{\cW(m)}^+G_{\cW(m)}^\ominus\to G_{\cW(m)}^+$,
which appear at the bottom in the commutative diagrammes
\[
\begin{array}{rcl}
G^+G^\ominus &\stackrel{\pr^+}{\longrightarrow} & G^+\\
\downarrow\;\; & & \;\;\downarrow\\
G_{\cW(m)}^+G_{\cW(m)}^\ominus &\longrightarrow & G_{\cW(m)}^+.
\end{array}
\]
\end{proof}
%
%
%
%
%
%
%
%
%
%
\section{Loop groups over algebras of germs}
We show that algebras of germs around~$0$
of holomorphic functions or meromorphic functions
on punctured disks
are of limited use for the construction
of Lie groups of loops.\\[2.3mm]
We first discuss the algebra
\[
\cH\, :=\, \dl\,\, \cO(B^{\C}_{1/n}(0)\setminus\{0\},\C)\,\cong\,
\C\{z\}\times \cO(\wh{\C}\setminus\{0\},\C)_*
\]
of germs $[f]$ of holomorphic functions~$f$
on punctured disks around~$0$ in the complex plane;
$\cH$ is endowed with the locally convex direct
limit topology
(cf.\ \cite{Kac}). Then $\cH$ is not a topological
algebra (in the sense of this article):
\begin{la}\label{Hunstet}
The algebra multiplication
$m\colon \cH\times \cH$, $([f],[g])\mto [fg]$
is not continuous.
\end{la}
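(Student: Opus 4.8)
The plan is to reduce the statement to the discontinuity of a single ``cross term'' of $m$ and then to break continuity directly from the neighbourhood definition; sequences will not suffice, for reasons I explain at the end. Using the Laurent expansion to write a germ $[f]$ as the pair $(\reg f,\pr f)$ of its regular part (a germ at $0$, i.e.\ an element of $\C\{z\}$) and its principal part (an element of $\cO(\wh\C\setminus\{0\},\C)_*$), the stated isomorphism $\cH\cong\C\{z\}\times\cO(\wh\C\setminus\{0\},\C)_*$ exhibits the two factor inclusions $\iota_1,\iota_2$ and the regular-part projection $\pr\colon\cH\to\C\{z\}$ as continuous linear maps. Hence, if $m$ were continuous, then so would be the bilinear map
\[
\beta\colon\C\{z\}\times\cO(\wh\C\setminus\{0\},\C)_*\to\C\{z\},\qquad (p,q)\mapsto\reg(pq),
\]
being $\pr\circ m\circ(\iota_1\times\iota_2)$. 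It therefore suffices to show that $\beta$ is discontinuous at $(0,0)$.

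For the obstruction I would use the linear functional $\lambda\colon\C\{z\}\to\C$ assigning to $p=\sum_{l\ge0}a_lz^l$ its coefficient $a_1$. It is continuous, since on the Banach step of functions bounded and holomorphic on $B^\C_{1/m}(0)$, with supremum norm $\|\cdot\|_{1/m}$, a Cauchy estimate gives $|a_1|\le m\,\|p\|_{1/m}$. Thus $W:=\lambda^{-1}(\bD)$ is a $0$-neighbourhood in $\C\{z\}$. Writing $q=\sum_{k\ge1}b_kz^{-k}$, one computes $\lambda(\beta(p,q))=\sum_{k\ge1}a_{k+1}b_k$, and the key observation is that the resonant monomials $p=a\,z^{K+1}$ and $q=b\,z^{-K}$ contribute $\lambda(\beta(p,q))=ab$. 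Now suppose $\beta$ is continuous and choose $0$-neighbourhoods $U\subseteq\C\{z\}$ and $V\subseteq\cO(\wh\C\setminus\{0\},\C)_*$ with $\beta(U\times V)\subseteq W$. As $\cO(\wh\C\setminus\{0\},\C)_*$ is Fréchet, $V$ contains a set $\{q\colon\sup_{|z|=1/T_0}|q(z)|\le\eta\}$ for some integer $T_0$ and some $\eta>0$; as $\C\{z\}$ is a Silva space, the inclusion of each Banach step is continuous, so fixing any $m>T_0$ yields $\delta>0$ with $\{p\colon\|p\|_{1/m}\le\delta\}\subseteq U$.

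It then remains to insert, for $K\ge1$, the test functions $p_K:=\delta\,m^{K+1}z^{K+1}$ and $q_K:=\eta\,T_0^{-K}z^{-K}$. A direct check gives $\|p_K\|_{1/m}=\delta$ and $\sup_{|z|=1/T_0}|q_K|=\eta$, so $(p_K,q_K)\in U\times V$; yet
\[
\lambda(\beta(p_K,q_K))=\bigl(\delta\,m^{K+1}\bigr)\bigl(\eta\,T_0^{-K}\bigr)=\delta\eta\,m\,(m/T_0)^{K}\longrightarrow\infty\qquad(K\to\infty),
\]
because $m>T_0$. This contradicts $\beta(U\times V)\subseteq W=\lambda^{-1}(\bD)$. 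Hence $\beta$, and with it $m$, fails to be continuous.

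The main conceptual subtlety — and the reason the argument must be phrased through neighbourhoods rather than sequences — is that $\beta$ is separately continuous and in fact sequentially continuous: on the Silva factor any null sequence already lives in a single step, while convergence in $\cO(\wh\C\setminus\{0\},\C)_*\cong\cO(\C)$ forces superexponential decay of the coefficients, which swamps every monomial product. Thus no sequence reveals the failure, and the discontinuity is a genuinely joint phenomenon. The argument exploits precisely the asymmetry that a single Fréchet seminorm (control on one circle $|z|=1/T_0$) already bounds $q$, whereas on the regular side the coefficient of $z^{K+1}$ is permitted to grow like $\delta\,m^{K+1}$; the surviving mismatch $(m/T_0)^{K}$ is what destroys joint continuity. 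The two points demanding care are keeping the Silva quantifiers straight — $U$ absorbs each step's ball only with a step-dependent radius, so one must commit to a single admissible $m>T_0$ — and confirming that $\reg(p_Kq_K)$ genuinely carries the $z^1$-term on which $\lambda$ is evaluated.
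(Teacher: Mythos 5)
Your proof is correct, and it takes a genuinely more elementary route than the paper's. The paper also splits $\cH\cong\C\{z\}\times\cO(\wh{\C}\setminus\{0\},\C)_*$ and pairs the product against a residue-type functional, but it then invokes the isomorphism $\theta\colon\cO(\wh{\C}\setminus\{0\},\C)_*\to\C\{z\}'$, $\theta(f)([g])=\res_0(fg)$, to identify the cross term of $m$ with the evaluation pairing $\ev\colon\C\{z\}'\times\C\{z\}\to\C$, and concludes by citing the known fact that $\ev\colon E'\times E\to\C$ is discontinuous for every non-normable locally convex space~$E$. You instead prove the relevant special case of that abstract fact by hand: your functional $\lambda\circ\beta$, $(p,q)\mto\sum_{k\geq 1}a_{k+1}b_k$, is just a shifted residue pairing, and your monomials $p_K=\delta\,m^{K+1}z^{K+1}$, $q_K=\eta\,T_0^{-K}z^{-K}$ are explicit witnesses; the quantifier bookkeeping (fixing $T_0$ from the Fr\'{e}chet neighbourhood first, then committing to one Silva step $m>T_0$) is exactly right, since a $0$-neighbourhood in the direct limit meets each Banach step only in a step-dependent ball. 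What the paper's route buys is brevity and a structural explanation — the cross term \emph{is} an evaluation duality, so discontinuity is inherited from a general principle; what your route buys is self-containedness (no duality theory, no external citation beyond the product decomposition of $\cH$) and a sharper by-product: your closing observation that $\beta$ is sequentially continuous — null sequences in the Silva factor live in a single step, while null sequences of principal parts have superexponentially decaying coefficients — correctly explains why no sequential argument can work and is a point the paper does not record. The only caveat worth making explicit in a write-up is the maximum-principle step you use implicitly: that the sets $\{q\colon\sup_{|z|=1/T_0}|q(z)|\leq\eta\}$ form a neighbourhood basis in $\cO(\wh{\C}\setminus\{0\},\C)_*$ because $q$ is holomorphic on the compact region $\{z\in\wh{\C}\colon |z|\geq 1/T_0\}$ of the sphere, so its supremum there is attained on the boundary circle.
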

\begin{proof}
Let $\C\{z\}=\dl \, \cO(B^\C_{1/n}(0),\C)$ be the algebra of germs
of holomorphic functions around~$0$,
endowed with the locally convex direct limit topology.
Let $\C\{z\}'$ be its topological dual spaces, endowed with the topology of
bounded convergence.
Write $f_p\colon \wh{\C}\setminus\{0\}\to\C$
for the principal part of a holomorphic function
$f\colon B^\C_{1/n}(0)\setminus\{0\}\to\C$
and $f_r\colon B^\C_{1/n}(0)\to \C$
for its regular part.
The map
\[
\cH\to
\C\{z\}\times \cO(\wh{\C}\setminus\{0\},\C),\quad
[f]\mto([f_r],f_p)
\]
is well defined and an isomorphism of topological
vector spaces (cf.\ \cite{Kac}).
Moreover, the map
\[
\theta\colon \cO(\wh{\C}\setminus\{0\},\C)_*\to\C\{z\}'
\]
given by $\theta(f)([g]):=\res_0(fg)$
is an isomorphism of topological vector spaces
(cf.\ \cite{Kac}).
The linear map
$\res_0\colon \cH\to\C$, $[f]\mto \res_0(f)$
is continuous, as it is continuous on
each step of the direct system.
If $m$ was continuous, then also the map
\[
h\colon  \cO(\wh{C}\setminus\{0\},\C)\times\C\{z\}\to \cH,\quad
(f,[g])\mto [fg]
\]
would be continuous and hence also $\res_0\circ h$.
But $\res_0\circ f=\ev\circ (\id\times \theta)$
using the identity map of $\cO(\wh{\C}\setminus\{0\},\C)$
and the evaluation map
\[
\ev\colon \C\{z\}'\times\C\{z\}\to \C,\quad (\lambda,[g])\mto\lambda([g]).
\]
Thus $\ev$ would be continuous. But it is well known that
$\ev\colon E'\times E\to\C$ is discontinuous for
each non-normable complex locally convex space~$E$
(cf.\ \cite{KaM}), contradiction.
\end{proof}
We record an immediate consequence:
\begin{la}\label{bra-unstet}
For each
finite-dimensional complex
Lie algebra $(\cg,[.,.]_\cg)$
which is not abelian,
the Lie bracket $[.,.]$ on $\cg_\cH$
is discontinuous.
\end{la}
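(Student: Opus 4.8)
The plan is to reduce discontinuity of the bracket on $\cg_\cH$ to the discontinuity of the multiplication map $m$ on $\cH$ that was just established in Lemma~\ref{Hunstet}. Since $\cg$ is not abelian, I first choose $x,y\in\cg$ with $z:=[x,y]_\cg\neq 0$. Recall that the Lie bracket on $\cg_\cH=\cg\otimes_\C\cH$ is determined by $[x\otimes a,\,y\otimes b]=[x,y]_\cg\otimes(ab)=z\otimes(ab)$ for $a,b\in\cH$, using that $\cH$ is commutative. The whole argument rests on exhibiting $m$ as a composition of the bracket with continuous linear maps.

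Next I would record the continuity of the relevant coordinate maps coming from the projective tensor topology of~\ref{proj-tensor}. Complete $z$ to a basis $b_1=z,b_2,\ldots,b_\ell$ of $\cg$. Then \ref{proj-tensor} gives a topological direct sum decomposition $\cg_\cH=\bigoplus_{j=1}^{\ell}(\C b_j\otimes_\C\cH)$, in which each summand is closed. In particular, the inclusion $\iota_z\colon\cH\to\cg_\cH$, $c\mapsto z\otimes c$ is a topological embedding onto the closed summand $\C z\otimes_\C\cH$, and it admits a continuous linear left inverse $\pi_z\colon\cg_\cH\to\cH$ (projection onto that summand followed by the isomorphism $z\otimes c\mapsto c$). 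The linear maps $\cH\to\cg_\cH$, $a\mapsto x\otimes a$ and $b\mapsto y\otimes b$ are continuous by the same description.

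I then argue by contradiction. Suppose the bracket $\beta\colon\cg_\cH\times\cg_\cH\to\cg_\cH$ were continuous. Composing $\beta$ with the continuous map $\cH\times\cH\to\cg_\cH\times\cg_\cH$, $(a,b)\mapsto(x\otimes a,\,y\otimes b)$ yields a continuous map $(a,b)\mapsto z\otimes(ab)$, and post-composing with the continuous retraction $\pi_z$ gives precisely $m\colon\cH\times\cH\to\cH$, $(a,b)\mapsto ab$. Hence $m$ would be continuous, contradicting Lemma~\ref{Hunstet}, and therefore $\beta$ is discontinuous.

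There is essentially no hard part: the content is entirely in the reduction, and the only point requiring care is that $\iota_z$ is a topological embedding possessing a continuous linear retraction. This is immediate from the direct-sum description of the projective tensor topology with a finite-dimensional factor in~\ref{proj-tensor}, so I expect the argument to be short and self-contained once Lemma~\ref{Hunstet} is in hand.
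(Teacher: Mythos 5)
Your proof is correct and is essentially the paper's own argument: both reduce to Lemma~\ref{Hunstet} via the identity $[x\otimes a,\,y\otimes b]=[x,y]_\cg\otimes(ab)$ and the continuity properties of the coordinate maps from~\ref{proj-tensor}. The only cosmetic difference is that you invert $\iota_z$ via an explicit continuous retraction $\pi_z$ onto the closed summand $\C z\otimes_\C\cH$, whereas the paper simply uses that $\iota_z$ is a topological embedding and composes with $\iota_z^{-1}$ on its image.
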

\begin{proof}
For each $a\in \cg\setminus\{0\}$,
the linear map
\[
\iota_a\colon\cH\to \cg_\cH=\cg\otimes_\C \cH,\quad
[f]\mto a\otimes [f]
\]
is a topological embedding
(cf.\ \ref{proj-tensor}).
We have
\[
[\iota_x([f]),\iota_y([g])]=[x,y]\otimes [f][g]
=\iota_z(m([f],[g]))
\]
for all $[f],[g]\in\cH$.
Hence, if $[.,.]$ was continuous, then also
\[
m=\iota_z^{-1}\circ [.,.]\circ (\iota_x\times \iota_y)
\]
were continuous, which contradicts Lemma~\ref{Hunstet}.
\end{proof}
\begin{rem}
The paper \cite{Her}
claims to obtain complex Lie groups with
Lie algebra $\cg_\cH$,
for certain finite-dimensional
non-abelian Lie algebras~$\cg$.
This is false;
the Lie bracket on $\cg_\cH$
being discontinuous,
$\cg_\cH$ cannot be the Lie algebra
of any Lie group
in the usual sense (as in \cite{Mil}
and the current article).
The construction remains erroneous when
complex analytic Lie groups
in the sense of convenient differential calculus
are considered (as in \cite{KaM}).
In fact,
if one wishes the restriction
$\exp_G|_U$ of the exponential
map to an open $0$-neighbourhood $U\sub\cg$
to be a $\C$-analytic diffeomorphism onto an open set,
one usually accomplishes this by choosing
$U$ small (notably, bounded).\footnote{Only in rare
cases, like $1$-connected nilpotent groups,
we can choose large ($U=\cg$ in the latter case).}
But then the subset $\Omega$ of germs $[f]\in \cH$
having a representative $g\in [f]$ with image
in~$U$ necessitates that $g$ has vanishing principal
part. Hence $\Omega$ has empty interior in~$\cH$
and cannot be used as the domain of a local parametrization
for a Lie group,\footnote{One could only recover the Lie groups of germs of $G$-valued
complex analytic functions on the non-punctured plane~$\C$ around~$0$,
as in \cite{GEM}.} contrary to the attempts in~\cite{Her}.
\end{rem}
The algebra $\cM$ of germs $[f]$
of meromorphic functions around~$0$ in~$\C$
can be turned into a locally convex topological
algebra~\cite{BDS}. The inversion map
$\eta\colon \cM^\times \to\C$
is not continuous (see \cite{BDS}).
Moreover, $\eta$ is not complex
analytic in the sense of convenient differential calculus
(as in \cite{KaM}).
In fact, $\cM$ is a Silva space,
i.e., a locally convex direct limit
\[
\cM=\dl \,E_n
\]
with complex Banach spaces $E_1\sub E_2\sub\cdots$
such that all inclusion maps are compact operators.
Like every Silva space, $\cM$ is compact regular,
i.e., each compact subset $K\sub\cM$
is contained in~$E_n$ and compact in there
for some $n\in\N$
(see, e.g., \cite{Flo} or \cite{GaN}).
Hence
convenient complex analyticity of~$\eta$ is equivalent
to complex analyticity of $\eta|_{\cM^\times\cap E_n}$
for each $n\in\N$,
which in turn is equivalent to
complex analyticity of~$f$
(see \cite{GaN}).
But $f$ is not complex analytic
(being discontinuous).
As a consequence,
$\GL_n(\cM)=(\cM^{n\times n})^\times$
fails to be a complex Lie group
when considered as an open subset of
$\cM^{n\times n}$.\\[2.3mm]
Using coefficients in $\cH$ of~$\cM$,
Lie groups of loops can be constructed
only in limited situation.
\begin{prop}
Let $\cg$ be a finite-dimensional complex
Lie algebra which is nilpotent.
Then the Baker-Campbell-Hausdorff multiplication
makes $\cg_\cM$ a complex Lie group in the sense
of this article,
and it makes $\cg_\cH$ a complex Lie group
in the sense of convenient differential calculus.
\end{prop}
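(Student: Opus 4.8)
The plan is to exploit nilpotency to turn the Baker--Campbell--Hausdorff multiplication into a \emph{polynomial} map, so that all group operations are governed by the iterated Lie bracket alone; the two assertions then differ only in the regularity available for that bracket. First I would record that $\cg_\cM$ and $\cg_\cH$ are nilpotent of class at most $c$, the nilpotency class of~$\cg$. Indeed, using \ref{proj-tensor} and the identity $[x\otimes f,y\otimes g]=[x,y]\otimes fg$, the lower central series of $\cg_\cA$ satisfies $\cg_\cA^{(k)}=\cg^{(k)}\otimes_\C\cA$ for $\cA\in\{\cM,\cH\}$; since $\cg^{(c+1)}=0$ this yields $\cg_\cA^{(c+1)}=0$. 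Hence the series $x*y=x+y+\tfrac12[x,y]+\cdots$ terminates after the brackets of length~$c$, and on both $\cg_\cM$ and $\cg_\cH$ it is a \emph{finite} universal Lie polynomial in $x$ and~$y$. Associativity of~$*$, neutrality of~$0$, and the fact that $x*(-x)=(-x)*x=0$ (every nested bracket of $x$ and $-x$ vanishes), then all follow from the corresponding formal BCH identities, which become honest polynomial identities after truncation at length~$c$.

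For the first assertion it then suffices to see that this BCH polynomial is $\C$-analytic on $\cg_\cM$. By \cite{BDS} and the standing convention of this article, $\cM$ is a topological algebra, so its multiplication is jointly continuous; hence the bracket $\cg_\cM\times\cg_\cM\to\cg_\cM$ is continuous bilinear. As a composition of jointly continuous maps with continuous projections, every iterated bracket is again jointly continuous, i.e.\ a continuous homogeneous polynomial of its degree, and the BCH map is a finite sum of these --- thus a continuous polynomial, which is $\C$-analytic in the sense used here. Inversion $x\mto -x$ is continuous linear. Modelling $\cg_\cM$ on itself by the single global chart $\id$, one obtains a complex Lie group in the sense of this article. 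Crucially, this argument never invokes the algebra inversion of~$\cM$, which fails to be continuous; nilpotency is precisely what allows us to replace it by negation.

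For the second assertion the obstruction is that, by Lemma~\ref{Hunstet} and Lemma~\ref{bra-unstet}, multiplication on $\cH$ --- and hence the bracket on $\cg_\cH$ for non-abelian~$\cg$ --- is \emph{discontinuous}, so the preceding argument cannot produce a Lie group in the ordinary sense. Here I would pass to the convenient calculus of \cite{KaM}, in which a $\C$-multilinear map between convenient vector spaces is $\C$-analytic exactly when it is \emph{bounded}. The key step is thus to show that multiplication $\cH\times\cH\to\cH$ is bounded: writing $\cH=\dl\,\cO(B^\C_{1/n}(0)\setminus\{0\},\C)$, the product of two germs holomorphic on a fixed punctured disk is again holomorphic on that disk, and on each step the compact-open topology renders multiplication jointly continuous; since bounded subsets of $\cH$ localize into single steps, a product of two bounded sets restricts into a common step and stays bounded there. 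Granting boundedness, the bracket on $\cg_\cH$ is bounded bilinear, hence conveniently $\C$-analytic; its iterated brackets are bounded multilinear (boundedness being stable under composition), so the BCH polynomial is conveniently $\C$-analytic, and with inversion $x\mto -x$ linear and bounded the single-chart construction gives a complex Lie group in the sense of convenient differential calculus.

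The main obstacle is exactly this boundedness of the multiplication on~$\cH$: unlike continuity, which genuinely fails, boundedness must be extracted from the inductive-limit structure, and the delicate point is the regularity of the limit, namely that each bounded subset of $\cH$ is contained and bounded in a single step $\cO(B^\C_{1/n}(0)\setminus\{0\},\C)$. Once this localization is secured, the fact that products of germs remain on the same punctured disk does the rest, and everything else reduces to the finite-order, formal BCH bookkeeping permitted by nilpotency.
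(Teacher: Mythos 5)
Your proposal is correct and takes essentially the same route as the paper's proof: nilpotency truncates the BCH series to a finite Lie polynomial, inversion is $x\mto -x$, and the group operations are complex analytic on $\cg_\cM$ (continuous bracket, since $\cM$ is a topological algebra) resp.\ conveniently complex analytic on $\cg_\cH$ (bounded bracket). Your explicit boundedness argument for the multiplication of $\cH$, via localization of bounded sets using $\cH\cong\C\{z\}\times\cO(\wh{\C}\setminus\{0\},\C)_*$, merely spells out a step that the paper's terse proof leaves implicit.
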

\begin{proof}
Since $\cg_\cH$ and $\cg_\cM$
are nilpotent, the BCH-series is given by a finite sum
and defines a global group structure.
The inversion map is $g\mto -g$ and hence
complex analytic.
Being a linear combination
of nested Lie brackets which are
conveniently complex analytic (resp.,
complex analytic),
the BCH-multiplication
is conveniently complex analytic and
complex analytic, respectively.
\end{proof}
{\small{\bf Helge Gl\"{o}ckner}, Universit\"{a}t Paderborn, Warburger Str.\ 100,
33098 Paderborn,\linebreak
Germany; glockner@math.uni-paderborn.de}\vfill
\end{document}